\newtheorem{theorem}{Theorem}[section]
\newtheorem{lemma}{Lemma}[section]
\newtheorem{corollary}{Corollary}[section]
\newtheorem{prop}{Proposition}[section]
\newtheorem{definition}{Definition}[section]
\newtheorem{remark}{Remark}
\newtheorem{example}{Example}
\newenvironment{proof}{\noindent{\textsc{Proof.}}}
{$\hfill\Box$\vspace{0.1 cm}\\}
\newcommand{\R}{\mathbb R}
\newcommand{\N}{{\mathbb N}}
\newcommand{\Z}{{\mathbb Z}}
\newcommand{\Q}{{\mathbb Q}}
\newcommand{\C}{{\mathcal C}}
\newcommand{\A}{{\mathcal A}}
\newcommand{\B}{{\mathcal B}}
\newcommand{\PP}{\mathcal{P}}
\newcommand{\WW}{\mathcal{W}}
\newcommand{\DD}{\mathcal{D}}
\begin{document}


\title{Measure differential equations}

\author{Benedetto Piccoli}

\maketitle

\begin{abstract}
A new type of differential equations for probability measures on Euclidean spaces,
called Measure Differential Equations (briefly MDEs), is introduced.
MDEs correspond to Probability Vector Fields, which map measures on an Euclidean
space to measures on its tangent bundle. Solutions are intended in weak sense and 
existence, uniqueness and continuous dependence results are proved under suitable conditions.
The latter are expressed in terms of the Wasserstein metric on the base and fiber of the tangent bundle.\\
MDEs represent a natural measure-theoretic generalization of 
Ordinary Differential Equations via a monoid morphism mapping sums of vector fields
to fiber convolution of the corresponding Probability Vector Fields.
Various examples, including finite-speed diffusion and concentration, are shown,
together with relationships to Partial Differential Equations.
Finally, MDEs are also natural mean-field limits of multi-particle systems,
with convergence results extending the classical Dubroshin approach.
\end{abstract}

\section{Introduction}
The evolution of many physical and biological systems can be modeled by ordinary or
partial differential equations. To include a representation of
uncertainties, the state of the system can be modeled by
a probability distribution or a random variable rather than a point 
of an Euclidean space (or a manifold.)
Stochastic differential equations (SDEs) \cite{Oksendal} 
offer a well-developed and successful tool
to describe the evolution of random variables. 

We define a new type of differential equations for probability measures.
The point of view is that of optimal transport, thus we endow
the space of probability measures (on an Euclidean space $\R^n$)
with the Wasserstein metric.
The latter is defined in terms of solutions to the optimal transport problem,
first proposed by Monge in 1781 and then extended by Kantorovich in 1942, see \cite{Villani} for more complete historical perspectives. 
We first introduce the concept of Probability Vector Field (briefly PVF),
which is a map assigning to every probability measure $\mu$ on $\R^n$
a probability measure $V[\mu]$ on $T\R^n$ (the tangent bundle), 
whose marginal on the base is  $\mu$ itself. 
In simple words, the fiber values of $V[\mu]$ provide the velocities along which the mass of $\mu$ is spread.
Given a PVF $V$, the corresponding Measure Differential Equation (briefly MDE) reads
$\dot{\mu}=V[\mu]$ and a solution is defined in the usual weak sense.
If $V$ is sublinear (for the size of measures' support)
and continuous, w.r.t. the Wasserstein metrics on $\R^n$ and $T\R^n$, then
we obtain a solution using approximation and compactness.
More precisely, by discretizing in space, time and velocities we construct
approximate solutions consisting of finite sums of Dirac deltas 
moving on a lattice of $\R^n$, called Lattice Approximate Solutions  (briefly LASs.) 
LASs can be seen as  generalizations of probabilistic Cellular Automata, 
defined using $V$.

To address continuous dependence from initial data, 
it is not enough to ask for Lipschitz continuity of $V$
for the Wasserstein metrics. This is due to the fact that the fiber marginal
of $V[\mu]$ has a meaning of an infinitesimal displacement,
opposed to the base marginal. Therefore we introduce a different quantity,
which compute the Wasserstein distance over the fiber restricted
to transference plans which are optimal over the base, see (\ref{eq:WW}).
This allows to obtain the existence of a Lipschitz semigroup of solutions,
obtained as limit of LASs, from Lipschitz-type assumptions.
Weak solutions to Cauchy Problems for MDEs are not expected to be unique, thus we address the question
of uniqueness at the level of semigroup. For this purpose, we introduce the concept
of Dirac germ, which consists of small-time evolution for finite sums
of Dirac deltas. Then we show uniqueness of a Lipschitz semigroup,
compatible with a given Dirac germ. Therefore
uniqueness questions can be addressed by looking for unique limits to LASs with finite sums of Dirac deltas as initial data. 

Then we explore various connections of MDEs with classical approaches.
First, we show that MDEs represent a natural measure-theoretic generalization
of Ordinary Differential Equations (briefly ODEs).
A MDE is naturally associated to an ODE by moving masses along the ODE solutions.
Lipschitz continuity of the ODE implies existence of a Lipschitz semigroup for the corresponding MDE
(which is the only one compatible with ODE solutions.)
The correspondence ODEs-MDEs defines a map, which is
a monoid morphism between the space of vector fields, endowed with the usual sum,
and the space of PVFs, endowed with a fiber-convolution operation. Moreover, the map
sends the multiplication by a scalar to the natural counterpart of scalar
multiplication over the fiber, see Proposition \ref{prop:Vec-PVF}.\\
MDEs can model both diffusion and concentration phenomena.
We first show that a PVF $V$, which is constant on the fiber component,
gives rise to a simple translation (because of the Law of Large Numbers.)
On the other side, it is possible to define PVFs, which depend on the
global properties of the measures, providing finite speed diffusion.
For MDEs representing concentration,
uniqueness is obtained by one-sided Lipschitz-type
conditions, mimicking the one-sided Lipschitz conditions for ODEs.
Moreover, MDEs extend the theory of conservation laws with discontinuous fluxes.\\
Finally, kinetic models are considered. 
Dobrushin's approach (\cite{Dobrushin}) is recovered as a special case 
of MDEs in the following sense. Given a multi-particle system, whose dynamic is given
by ODEs, one can define a corresponding MDE under appropriate conditions 
(e.g. indistinguishibility of particles and uniform Lipschitz estimates.)
Moreover, the MDE enjoys well-posedness properties and compatibility with the 
empirical probability distributions defined by the multi-particle system.

The paper is organized as follows. In Section \ref{sec:BD} we define PVFs,
MDEs and solutions to MDEs. Then, in Section \ref{sec:Cauchy}, we prove
existence of solutions to Cauchy Problems for MDEs under continuity assumption,
and, in Section \ref{sec:Lip}, the existence of a Lipschitz semigroup of solutions
under appropriate Lipschitz-type assumptions. Uniqueness of Lipschitz semigroups
is addressed in Section \ref{sec:Dirac} using the concept of Dirac germ 
(Definition \ref{def:Dir-germ}). The relationship of MDEs with ODEs is explored
in Section \ref{sec:ODEs}, while examples of finite-speed diffusion
and concentration phenomena are given in Section \ref{sec:Ex}.
Finally, results for mean-field limits of multi-particle systems, 
seen as special cases of MDEs, are provided in Section \ref{sec:kin}.

\section{Basic definitions}\label{sec:BD}
For simplicity we restrict to $\R^n$, but a local theory can be easily developed 
for manifolds admitting a partition of unity. For every $R>0$, $B(0,R)$ indicates
the ball of radius $R$ centered at the origin,
$T\R^n$ the tangent bundle of $\R^n$,
and $\pi_1:T\R^n\to \R^n$ the projection to the base $\R^n$,
i.e. $\pi_1(x,v)=x$.
We also define $\pi_{13}:(T\R^n)^2\mapsto (\R^n)^2$ by
$\pi_{13}(x,v,y,w)=(x,y)$ (i.e. the projection on the bases for both components).
For every $A\subset \R^n$, $\chi_A$ indicates the characteristic function of the set $A$
and ${\C}^\infty_c(\R^n)$ indicates the space of smooth functions with compact support.

Given $(X,d)$ Polish space (complete separable metric space) 
we indicate by $\PP(X)$ the set of probability measures
on $X$, i.e. positive Borel measures with total mass equal to one. 
Given $\mu\in\PP(X)$ we indicate by $Supp(\mu)$ its support and we define
$\PP_c(X)$ to be the set of probability measures with compact support.
Given $(X_1,d_1)$, $(X_2,d_2)$ Polish spaces, $\mu\in\PP(X_1)$ 
and $\phi:X_1\to X_2$ measurable, we define the push forward
$\phi\#\mu\in\PP(X_2)$ by $\phi\#\mu(A) = \mu ( \phi^{-1}(A) ) =
\mu(\{x\in X_1:\phi(x)\in A\})$.
Given $\mu\in\PP(X_1)$ and $\nu_x\in\PP(X_2)$, $x\in X_1$,
we define $\mu\otimes \nu_x$ by 
$\int_{X_1\times X_2} \phi(x,v) \ d(\mu\otimes \nu_x) =
\int_{X_1} \int_{X_2} \phi(x,v) d\nu_x(v)\ d\mu(x)$.

\begin{definition}
A Probability Vector Field (briefly PVF)  on $\PP(\R^n)$ is a map 
$V: \PP(\R^n)\to\PP(T\R^n)$ such that $\pi_1\# V[\mu]=\mu$.
\end{definition}
Given a PVF $V$, we define the corresponding Measure Differential Equation (MDE) by:
\begin{equation}\label{eq:MDE}
\dot\mu=V[\mu].
\end{equation}
In simple words, $V[\mu]$ restricted to $T_xX$
indicates the directions towards which the mass of $\mu$ at $x$ is spread.
For every $\mu_0\in\PP(\R^n)$ we define the Cauchy problem:
\begin{equation}\label{eq:MDE-Cauchy}
\dot\mu=V[\mu],\qquad \mu(0)=\mu_0.
\end{equation}
A solution to (\ref{eq:MDE-Cauchy}) in weak sense is defined as follows:
\begin{definition}\label{def:sol-MDE}
A solution to (\ref{eq:MDE-Cauchy}) is a map $\mu:[0,T]\to \PP(\R^n)$ 
such that $\mu(0)=\mu_0$ and the following holds.
For every $f\in{\C}^\infty_c(\R^n)$,  the integral
$\int_{TR^n} (\nabla f(x)\cdot v)\ dV[\mu(s)](x,v)$ is defined for almost every $s$,
the map $s\to \int_{TR^n} (\nabla f(x)\cdot v)\ dV[\mu(s)](x,v)$ belongs to $L^1(\R)$,
and the map $t\to \int f\, d\mu(t)$ is absolutely
continuous and for almost every $t\in [0,T]$ it satisfies:
\begin{equation}\label{eq:sol-MDE}
\frac{d}{dt}\int_{R^n} f(x)\,d\mu(t)(x) = 
\int_{TR^n} (\nabla f(x)\cdot v)\ dV[\mu(s)](x,v).
\end{equation}
\end{definition}
Alternatively we may ask the following condition to hold 
for every $f\in{\C}^\infty_c(\R^n)$:
\begin{equation}\label{eq:MDE-int}
\int_{R^n} f(x)\,d\mu(t)(x) = \int_{R^n} f(x)\,d\mu_0(x) \ +
\int_0^t  \int_{TR^n} (\nabla f(x)\cdot v)\ dV[\mu(s)](x,v)\ ds.
\end{equation}
We also have the following equivalent formulation as distributional solution on $[0,T]\times\R^n$:
\begin{lemma}\label{lem:distr-sol}
Consider a map $\mu:[0,T]\to \PP(\R^n)$, with $\mu(0)=\mu_0$ and such that
for every $f\in{\C}^\infty_c(\R^n)$,  the integral
$\int_{TR^n} (\nabla f(x)\cdot v)\ dV[\mu(s)](x,v)$ is defined for almost every $s$ and
the map $s\to \int_{TR^n} (\nabla f(x)\cdot v)\ dV[\mu(s)](x,v)$ belongs to $L^1(\R)$.
Then $\mu$ is a solution to (\ref{eq:MDE-Cauchy})
if and only if for every $g\in{\C}^\infty_c([0,T]\times \R^n)$ it holds
\begin{eqnarray}\label{eq:MDE-distr}
& \int_{R^n} g(T,x)\,d\mu(T)(x) - \int_{R^n} g(0,x)\,d\mu_0(x) \ =\nonumber\\
& \int_0^T  \int_{R^n} \partial_s g(s,x)\ d\mu(s)\,ds+
\int_0^T  \int_{TR^n} (\nabla_x g(s,x)\cdot v)\ dV[\mu(s)](x,v)\ ds.
\end{eqnarray}
\end{lemma}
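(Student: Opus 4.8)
The plan is to prove the equivalence between the weak solution definition (\ref{eq:sol-MDE}) and the space-time distributional formulation (\ref{eq:MDE-distr}) by a standard test-function argument, treating the two implications separately. The key observation is that the space-time test functions $g\in\C^\infty_c([0,T]\times\R^n)$ can be approximated by, or reduced to, products of the form $g(s,x)=\varphi(s)f(x)$ with $\varphi\in\C^\infty_c((0,T))$ and $f\in\C^\infty_c(\R^n)$; since finite linear combinations of such tensor products are dense in $\C^\infty_c([0,T]\times\R^n)$ (in the appropriate topology, e.g. by Stone--Weierstrass together with control of derivatives), it suffices to verify the identity on this class and then pass to general $g$ by linearity and approximation.

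\medskip

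\noindent First I would assume $\mu$ solves (\ref{eq:MDE-Cauchy}) in the sense of Definition \ref{def:sol-MDE} and derive (\ref{eq:MDE-distr}). Fix $f\in\C^\infty_c(\R^n)$ and set $m_f(t)=\int_{\R^n}f\,d\mu(t)$; by hypothesis $m_f$ is absolutely continuous with $m_f'(t)=\int_{T\R^n}(\nabla f(x)\cdot v)\,dV[\mu(t)]$ for a.e.\ $t$. For a product test function $g(s,x)=\varphi(s)f(x)$, I would compute $\frac{d}{ds}\big(\varphi(s)m_f(s)\big)=\varphi'(s)m_f(s)+\varphi(s)m_f'(s)$, which is integrable since $m_f$ is bounded and $m_f'\in L^1$, and then integrate this identity over $[0,T]$ using the fundamental theorem of calculus for absolutely continuous functions. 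This yields
\begin{eqnarray*}
& \varphi(T)m_f(T)-\varphi(0)m_f(0) = \\
& \int_0^T \varphi'(s)\Big(\int_{\R^n}f\,d\mu(s)\Big)ds +\int_0^T \varphi(s)\Big(\int_{T\R^n}(\nabla f(x)\cdot v)\,dV[\mu(s)]\Big)ds,
\end{eqnarray*}
which is exactly (\ref{eq:MDE-distr}) for $g=\varphi f$, since $\partial_s g=\varphi' f$ and $\nabla_x g=\varphi\,\nabla f$. Summing over tensor products and passing to the limit gives the identity for all $g$.

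\medskip

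\noindent For the converse, I would assume (\ref{eq:MDE-distr}) holds for all $g$ and recover the weak formulation. Taking $g(s,x)=\varphi(s)f(x)$ with $\varphi\in\C^\infty_c((0,T))$ (so the boundary terms $\varphi(T),\varphi(0)$ vanish) reduces (\ref{eq:MDE-distr}) to
$$\int_0^T \varphi'(s)\,m_f(s)\,ds = -\int_0^T \varphi(s)\Big(\int_{T\R^n}(\nabla f(x)\cdot v)\,dV[\mu(s)]\Big)ds.$$
Since the right-hand integrand $s\mapsto\int_{T\R^n}(\nabla f\cdot v)\,dV[\mu(s)]$ is in $L^1$ by assumption, this says precisely that the distributional derivative of $m_f$ equals that $L^1$ function; hence $m_f$ coincides a.e.\ with an absolutely continuous function satisfying (\ref{eq:sol-MDE}), establishing Definition \ref{def:sol-MDE}. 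To fix the initial and terminal values and confirm $\mu(0)=\mu_0$, I would test against $g$ with appropriate boundary behavior (e.g.\ $\varphi(0)\neq 0$), recovering (\ref{eq:MDE-int}) and thereby the correct endpoints.

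\medskip

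\noindent The main obstacle I anticipate is purely technical rather than conceptual: justifying the reduction to tensor-product test functions and the interchange of the time integral with the fiber integral against $V[\mu(s)]$. The density of tensor products in $\C^\infty_c([0,T]\times\R^n)$ must be phrased carefully so that both $g$ and its relevant derivatives $\partial_s g$, $\nabla_x g$ converge uniformly on the common compact support; then dominated convergence, using the $L^1$-in-time bound on $s\mapsto\int_{T\R^n}(\nabla_x g\cdot v)\,dV[\mu(s)]$ (which follows from the hypothesis uniformly over the compact set where $g$ is supported), allows passage to the limit in every term. The measurability and integrability stipulations built into the statement are exactly what make these Fubini-type manipulations legitimate, so the proof reduces to carefully invoking the fundamental theorem of calculus for absolutely continuous functions together with a standard approximation argument.
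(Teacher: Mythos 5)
Your proof is correct and follows essentially the same route as the paper's: both directions reduce to tensor-product test functions $g(s,x)=\varphi(s)f(x)$, with boundary terms killed by taking $\varphi\in\C^\infty_c((0,T))$ in one direction and density of such products in $\C^\infty_c([0,T]\times\R^n)$ invoked for the other. Your version is somewhat more explicit about the product rule for absolutely continuous functions and the identification of the distributional derivative of $t\mapsto\int f\,d\mu(t)$ with the given $L^1$ function, but the argument is the same.
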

The proof of Lemma \ref{lem:distr-sol} is similar to that of Proposition 4.2 of \cite{Sant}
and is given in the Appendix.

\section{Existence of solutions to Cauchy problems for MDEs}\label{sec:Cauchy}
For simplicity we will focus on the set $\PP_c(\R^n)$ of probability measures
with compact support, but other sets with compactness properties
may be used, for instance based on bounds on the moments.
First we need to introduce some concepts from optimal transport theory.
We refer the reader to \cite{AGS,Sant,Villani,Villani2} for a complete perspective.

Given $(X,d)$ Polish space
and  given $\mu$, $\nu\in \PP(X)$
we indicate by $P(\mu,\nu)$ the set of transference plans from $\mu$ to $\nu$,
i.e. the set of probability measures on $X\times X$ with marginals
equal to $\mu$ and $\nu$ respectively.
Given $\tau\in P(\mu,\nu)$ let $J(\tau)$ bet its transportation cost:
\[
J(\tau)=\int_{X^2} d(x,y)\, d\tau(x,y).
\]
The Monge-Kantorovich or optimal transport problem amounts to find $\tau$
that minimizes $J(\tau)$ and the Wasserstein metric is defined by:
\[
W^X(\mu,\nu)=\inf_{\tau\in P(\mu,\nu)} J(\tau).
\]
For simplicity of notation we drop the superscript if $X=\R^n$.
We indicate by $P^{opt}(\mu,\nu)$ the (nonempty) set of optimal 
transference plans, i.e. minimizing $J(\tau)$, and
we always endow $\PP(X)$ with the Wasserstein metric and the relative topology.
We also recall the Kantorovich-Rubinstein duality:
\begin{equation}\label{eq:KR-duality}
W^X(\mu,\nu)=\sup\left\{\int_{X} f\,d(\mu-\nu)\ :f: X\to\R,\ Lip(f)\leq 1\right\},
\end{equation}
where $Lip(f)$ indicates the Lipschitz constant of $f$. We have the following:
\begin{lemma}\label{lem:unif-comp}
Consider a sequence $\mu_N\subset\PP_c(\R^n)$ and assume there exists $R>0$
such that $Supp(\mu_N)\subset B(0,R)$ .
Then there exists $\mu\in\PP_c(\R^n)$ and a subsequence, still indicated by $\mu_N$,
such that  $W(\mu_N(t),\mu(t))\to 0$.
\end{lemma}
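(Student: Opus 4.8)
The plan is to reduce the statement to the classical fact that, on a fixed compact set, narrow (weak) convergence of probability measures coincides with convergence in the Wasserstein metric, and to produce the weak limit via Prokhorov's theorem. Throughout I write $K=\overline{B(0,R)}$, which is compact in $\R^n$ and contains the support of every $\mu_N$. (I read the conclusion as $W(\mu_N,\mu)\to 0$, the time variable in the statement being a typographical leftover, since $\mu_N$ and $\mu$ are measures rather than curves.)

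First I would establish tightness and extract a narrowly convergent subsequence. Since $Supp(\mu_N)\subset K$ for all $N$ and $K$ is compact, the family $\{\mu_N\}$ is trivially tight: the single compact set $K$ satisfies $\mu_N(K)=1$ for every $N$. By Prokhorov's theorem there is a subsequence, still denoted $\mu_N$, and a measure $\mu\in\PP(\R^n)$ with $\mu_N\to\mu$ narrowly, i.e. $\int\phi\,d\mu_N\to\int\phi\,d\mu$ for every bounded continuous $\phi$. To see that $\mu\in\PP_c(\R^n)$, note that for any open set $U$ with $U\cap K=\emptyset$ the portmanteau theorem gives $\mu(U)\leq\liminf_N\mu_N(U)=0$, so $Supp(\mu)\subset K$ and $\mu$ has compact support.

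The decisive step is to upgrade narrow convergence to Wasserstein convergence, for which I would use the Kantorovich--Rubinstein duality (\ref{eq:KR-duality}). Since $\mu_N$ and $\mu$ are probability measures, adding a constant to a test function leaves $\int f\,d(\mu_N-\mu)$ unchanged, so in the supremum defining $W(\mu_N,\mu)$ I may restrict to the class
\[
\mathcal{F}=\{f:K\to\R\ :\ Lip(f)\leq 1,\ f(0)=0\}.
\]
On $K$ every $f\in\mathcal{F}$ obeys $|f(x)|\leq|x|\leq R$, so $\mathcal{F}$ is uniformly bounded and equicontinuous; by the Arzel\`a--Ascoli theorem it is precompact in $C(K)$ for the uniform norm. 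Fixing $\varepsilon>0$, I would cover $\mathcal{F}$ by finitely many uniform $\varepsilon$-balls centered at $f_1,\dots,f_m\in\mathcal{F}$. For each fixed $j$, narrow convergence yields $\int f_j\,d(\mu_N-\mu)\to 0$, and for an arbitrary $f\in\mathcal{F}$, choosing $f_j$ with $\|f-f_j\|_\infty\leq\varepsilon$ gives, using that $\mu_N$ and $\mu$ have total mass one,
\[
\Big|\int f\,d(\mu_N-\mu)\Big|\leq \max_{1\leq j\leq m}\Big|\int f_j\,d(\mu_N-\mu)\Big|+2\varepsilon.
\]
Taking the supremum over $f\in\mathcal{F}$ and then $N\to\infty$ gives $\limsup_N W(\mu_N,\mu)\leq 2\varepsilon$, and since $\varepsilon$ is arbitrary, $W(\mu_N,\mu)\to 0$.

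The main obstacle is exactly this last upgrade: narrow convergence controls each test function individually, whereas $W$ is a supremum over the infinite class $\mathcal{F}$, so some uniformity must be extracted. The uniform bound on the supports is what makes this possible, since it renders $\mathcal{F}$ equicontinuous and uniformly bounded, hence compact for the sup norm; without bounded support one would additionally need convergence of first moments. Everything else — tightness, extraction, and closedness of the support under weak limits — is routine once all supports are confined to the single compact ball $K$.
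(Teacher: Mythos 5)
Your proof is correct, and it shares the paper's skeleton: tightness is immediate from the uniform support bound, Prokhorov's theorem gives a narrowly convergent subsequence, and the narrow limit is then upgraded to Wasserstein convergence. The difference lies entirely in that last upgrade. The paper simply invokes the standard equivalence (Proposition 7.1.5 of \cite{AGS}): narrow convergence together with convergence of first moments implies convergence in $W$, the moment condition being automatic here since all supports lie in $B(0,R)$. You instead prove this implication from scratch in the compactly supported case, using Kantorovich--Rubinstein duality plus Arzel\`a--Ascoli compactness of the normalized $1$-Lipschitz class on $\overline{B(0,R)}$ to extract the uniformity over test functions that narrow convergence alone does not provide. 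Your route is more self-contained and makes explicit exactly where the uniform support bound is used; the paper's is shorter by outsourcing the step to a cited result. You also supply two small details the paper leaves implicit (the portmanteau argument that the limit has support in $\overline{B(0,R)}$, and the correct reading of the spurious ``$(t)$'' in the statement). The only point you gloss over is that the centers $f_j$ of your $\varepsilon$-net live in $C(K)$ while narrow convergence is tested against bounded continuous functions on $\R^n$; this is harmless since each $f_j$ admits a bounded continuous (indeed $1$-Lipschitz) extension and all measures involved charge only $K$, but it deserves a sentence.
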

The proof of the Lemma \ref{lem:unif-comp} is standard and we postpone it to the Appendix.\\
Our assumptions to prove existence of solutions are the following:
\begin{itemize}
\item[(H1)] $V$ is support sublinear for $\mu\in\PP_c(X)$, i.e. there exists $C>0$ such that
\[
\sup_{(x,v)\in Supp(V[\mu])} |v|\leq C
\left( 1 + \sup_{x\in Supp(\mu)} |x|\right).
\]
\item[(H2)] The map $V:\PP_c(\R^n)\to \PP_c(T\R^n)$ is continuous
(for the topology given by the Wasserstein metrics $W^{\R^n}$ and $W^{T\R^n}$.)
\end{itemize}
To prove existence of solutions to a Cauchy problem (\ref{eq:MDE-Cauchy}), 
we define a sequence of approximate solutions using a scheme of Euler type. 
We first introduce some more notation.\\
For $N\in\N$ let $\Delta_N =\frac{1}{N}$ be the time step size,
$\Delta^v_N=\frac{1}{N}$ the velocity step size and
$\Delta^x_N=\Delta^v_N\Delta_N=\frac{1}{N^2}$ the space step size.
We also define $x_i$ to be the $(2N^3+1)^n$ equispaced discretization points
of $\Z^n/(N^2)\cap [-N,N]^n$ 
and $v_j$ to be the $(2N^2+1)^n$ equispaced discretization points
of $\Z^n/N\cap [-N,N]^n$.
Given $\mu\in\PP_c(\R^n)$ we define the following operator providing
an approximation by finite sums of Dirac deltas:
\begin{equation}\label{eq:disc-x}
\A^x_N(\mu)=\sum_i m^x_i(\mu) \delta_{x_i}
\end{equation}
where 
\begin{equation}\label{eq:def:mx}
m^x_i(\mu)=\mu(x_i+Q)
\end{equation}
with $Q=([0,\frac{1}{N^2}[)^n$.
Similarly given $\mu\in\PP_c(\R^n)$ (whose support is contained in the set
$\Z^n/N\cap [-N,N]^n$), we set:
\begin{equation}\label{eq:disc-v}
\A^v_N(V[\mu])= \sum_i
\sum_j m^v_{ij}(V[\mu])\ \delta_{(x_i,v_j)}
\end{equation}
where 
\begin{equation}\label{eq:def:mv}
m^v_{ij}(V[\mu])=V[\mu](\{(x_i,v):v\in v_j+Q'\}),
\end{equation}
with $Q'=([0,\frac{1}{N}[)^n$. 
For every $\mu\in\PP_c(\R^n)$ there exists $N$ such that
$Supp(\mu_0)\subset [-N,N]^n$, thus
from the deifnition of $\A^x_N$ and $\A^v_N$, we easily get:
\begin{lemma}\label{lem:As}
Given $\mu\in\PP_c(\R^n)$, for $N$ sufficiently big the following holds:
\[
W(\A^x_N(\mu),\mu)\leq \Delta^x_N,\qquad
W^{T\R^n}(\A^v_N(V[\mu]),V[\mu])\leq \Delta^v_N.
\]
\end{lemma}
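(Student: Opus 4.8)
The plan is to estimate each Wasserstein distance from above by exhibiting a single transference plan, since $W$ and $W^{T\R^n}$ are defined as infima of the transport cost $J$ over all plans. In both inequalities the natural candidate is the plan induced by a measurable ``snapping'' map $T$ that sends each point to the lattice point of the half-open cube containing it. Such a plan has the form $(\mathrm{id}\times T)\#$ of the source measure; its two marginals are automatically the source measure and its push forward $T\#$, and its cost is the average displacement $\int |p-T(p)|\,d(\cdot)$, which is bounded by the maximal displacement.

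For the first inequality, fix $N$ so large that $Supp(\mu)\subset[-N,N]^n$, and define $T_N:\R^n\to\R^n$ by $T_N(x)=x_i$ whenever $x\in x_i+Q$, with $Q=([0,\tfrac{1}{N^2}[)^n$. Since the half-open cubes $\{x_i+Q\}_i$ tile $\R^n$ exactly, $T_N$ is well defined and Borel. The key identity is $T_N\#\mu=\A^x_N(\mu)$, which is immediate from (\ref{eq:disc-x}) and the definition $m^x_i(\mu)=\mu(x_i+Q)$ in (\ref{eq:def:mx}), because $T_N^{-1}(x_i)=x_i+Q$. Taking $\tau=(\mathrm{id}\times T_N)\#\mu\in P(\mu,\A^x_N(\mu))$, its cost satisfies
\[
J(\tau)=\int_{\R^n}|x-T_N(x)|\,d\mu(x)\le \max_i\sup_{x\in x_i+Q}|x-x_i|=\operatorname{diam}(Q),
\]
which gives the claimed estimate of order $\Delta^x_N=\tfrac{1}{N^2}$ (exactly in sup-norm, and up to a dimensional constant in Euclidean norm).

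For the second inequality, recall that $\pi_1\#V[\mu]=\mu$ is supported on $\Z^n/N\cap[-N,N]^n$, whose points are among the $x_i$; hence $V[\mu]$ is concentrated on base points of the form $x_i$. Define $S_N:T\R^n\to T\R^n$ by $S_N(x_i,v)=(x_i,v_j)$ whenever $v\in v_j+Q'$, with $Q'=([0,\tfrac1N[)^n$, so that the fiber is snapped to the velocity lattice while the base is left fixed. Invoking (H1) and enlarging $N$ so that $C(1+\sup_{x\in Supp(\mu)}|x|)\le N$, the velocity support of $V[\mu]$ lies in $[-N,N]^n$ and the cubes $\{v_j+Q'\}_j$ cover it. As before, $S_N\#V[\mu]=\A^v_N(V[\mu])$ follows from (\ref{eq:disc-v}) and (\ref{eq:def:mv}), since $S_N^{-1}(x_i,v_j)=\{(x_i,v):v\in v_j+Q'\}$. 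The plan $(\mathrm{id}\times S_N)\#V[\mu]$ then has cost bounded by the maximal displacement; because the base coordinate is unchanged this reduces to the fiber displacement $|v-v_j|\le\operatorname{diam}(Q')$, of order $\Delta^v_N=\tfrac1N$.

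The computations are elementary, so there is no deep obstacle; the only points requiring genuine care are (i) checking that the half-open cube tilings make $T_N$ and $S_N$ well defined on the full mass and that their push forwards reproduce \emph{exactly} the masses $m^x_i$ and $m^v_{ij}$, and (ii) ensuring, through the ``$N$ sufficiently big'' hypothesis together with (H1), that the spatial lattice covers $Supp(\mu)$ and the velocity lattice covers the velocity support of $V[\mu]$. Both reduce to bookkeeping with the definitions of $\A^x_N$ and $\A^v_N$.
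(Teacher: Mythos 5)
Your proof is correct and is exactly the argument the paper has in mind (the paper gives no written proof, asserting the lemma follows ``easily from the definitions''): push forward by the half-open-cube snapping maps $T_N$ and $S_N$, check that the push forwards reproduce $\A^x_N(\mu)$ and $\A^v_N(V[\mu])$ exactly, and bound the cost of the induced transference plan by the cube diameter, using (H1) and ``$N$ sufficiently big'' to guarantee the lattices cover the relevant supports. Your parenthetical caveat is also right: with the Euclidean ground distance the honest bounds are $\sqrt{n}\,\Delta^x_N$ and $\sqrt{n}\,\Delta^v_N$, a dimensional constant that the paper's statement silently absorbs and that is immaterial in every subsequent use of the lemma.
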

We are now ready to define a sequence of approximate solutions.
\begin{definition}\label{def:LAS} 
Consider $V$ satisfying (H1).
Given the Cauchy Problem (\ref{eq:MDE-Cauchy}), $T>0$ and $N\in\N$,
we define the Lattice Approximate Solution (LAS)
$\mu^N:[0,T]\to \PP_c(\R^n)$ as follows.\\
Recalling (\ref{eq:disc-x})-(\ref{eq:def:mv}), 
we set $\mu_0^N=\A^x_N(\mu_0)$ and, by recursion, define:
\begin{equation}\label{eq:def-rec}
\mu^N_{\ell+1}=\mu^N((\ell+1)\Delta_N)=\sum_i \sum_j 
m^v_{ij}(V[\mu^N(\ell\Delta N)])\ \delta_{x_i+\Delta_N\, v_j}.
\end{equation}
By definition of $\Delta_N$, $\Delta^v_N$, $\Delta^x_N$ and (\ref{eq:def-rec}),
$Supp(\mu^N_\ell)$ is  contained in the set $\Z^n/(N^2)\cap [-N,N]^n$, thus
we can write $\mu^N_\ell=\sum_i m^{N,\ell}_i \delta_{x_i}$
for some $m^{N,\ell}_i\geq 0$.
Finally $\mu^N$ is defined for all times by time-interpolation:
\begin{equation}\label{eq:def-mu-int}
\mu^N(\ell\Delta_N+t)=\sum_{ij}
m^v_{ij}(V[\mu^N(\ell\Delta N)])\ \delta_{x_i+t\, v_j}.
\end{equation}
\end{definition}
In other words, to define $\mu^N_{\ell+1}$ 
we approximate $V[\mu_\ell^N]$ by $\A^v_N(V[\mu_\ell^N])$
and use the corresponding velocities to move the Dirac deltas of $\mu_\ell^N$.
By definition of $V[\cdot]$, 
$\sum_j m^v_{ij}(V[\mu^N_\ell])=m^{N,\ell}_i$ thus the mass is conserved.
Notice that the support of $\mu^N(t)$ is not, in general,
contained in $\Z^n/(N^2)\cap [-N,N]^n$.
Because of assumption (H1), the support of $\mu^N_\ell$ keeps uniformly bounded
on the time interval $[0,T]$, as detailed in next Lemma.
\begin{lemma} \label{lem:bound-supp}
Given a PVF $V$ satisfying (H1),  $\mu_0$ with $Supp(\mu_0)\subset B(0,R)$ 
and $\ell$ such that $\ell\Delta_N\leq T$, the following holds true:
\begin{equation}
Supp(\mu^N_\ell) \subset B\left(0,e^{C\ell\Delta_N}R+e^{C\ell\Delta_N}\right)
\subset B\left(0,e^{C T} (R+1)\right). 
\end{equation}
\end{lemma}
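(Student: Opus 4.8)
\noindent The plan is to track the radius $R_\ell := \sup_{x\in Supp(\mu^N_\ell)}|x|$ of the support at each discrete time step and to derive a Gronwall-type recursion for it directly from the definition (\ref{eq:def-rec}) of the LAS and the sublinearity assumption (H1).

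First I would analyze a single step. By (\ref{eq:def-rec}), every point of $Supp(\mu^N_{\ell+1})$ has the form $x_i+\Delta_N v_j$ with $m^v_{ij}(V[\mu^N_\ell])>0$. The first factor forces $x_i\in Supp(\mu^N_\ell)$, hence $|x_i|\leq R_\ell$. For the second, recalling (\ref{eq:def:mv}) with $Q'=[0,1/N)^n$, positivity of $m^v_{ij}$ means there exists $(x_i,v)\in Supp(V[\mu^N_\ell])$ with $v\in v_j+Q'$, so that $|v_j|\leq |v|+\sqrt{n}\,\Delta^v_N$. Assumption (H1) bounds $|v|\leq C(1+R_\ell)$, giving $|v_j|\leq C(1+R_\ell)+\sqrt{n}\,\Delta^v_N$. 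Combining these, and using $\Delta_N\Delta^v_N=\Delta^x_N$,
\[
R_{\ell+1}\leq R_\ell+\Delta_N|v_j|\leq R_\ell(1+C\Delta_N)+C\Delta_N+\sqrt{n}\,\Delta^x_N.
\]

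The clean way to iterate this affine recursion is to set $a_\ell:=R_\ell+1$, which turns the inequality into $a_{\ell+1}\leq a_\ell(1+C\Delta_N)+\sqrt{n}\,\Delta^x_N$. Solving this linear recursion and using $1+C\Delta_N\leq e^{C\Delta_N}$ together with the geometric sum $\sum_{k<\ell}(1+C\Delta_N)^k=((1+C\Delta_N)^\ell-1)/(C\Delta_N)$ yields
\[
a_\ell\leq a_0\,e^{C\ell\Delta_N}+\frac{\sqrt{n}\,\Delta^x_N}{C\Delta_N}\left(e^{C\ell\Delta_N}-1\right).
\]
For the initial radius, the discretization operator $\A^x_N$ of (\ref{eq:disc-x})--(\ref{eq:def:mx}) places mass at $x_i$ only when $(x_i+Q)\cap Supp(\mu_0)\neq\emptyset$, whence $R_0\leq R+\sqrt{n}\,\Delta^x_N$ and $a_0\leq R+1+\sqrt{n}\,\Delta^x_N$.

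Substituting back and recalling $\Delta^x_N/\Delta_N=\Delta^v_N=1/N$, every error term is $O(1/N)$ uniformly for $\ell\Delta_N\leq T$, since $e^{C\ell\Delta_N}\leq e^{CT}$. Hence $R_\ell\leq (R+1)e^{C\ell\Delta_N}-1+O(1/N)$, and the $-1$ leaves exactly enough room to absorb the $O(1/N)$ corrections for $N$ large, producing the stated bound $R_\ell\leq e^{C\ell\Delta_N}R+e^{C\ell\Delta_N}$; the second inclusion is then immediate from $\ell\Delta_N\leq T$. The only delicate point is this bookkeeping of the two discretization errors (the velocity quantization through $Q'$ and the initial spatial rounding through $\A^x_N$) and checking that the slack built into the statement genuinely absorbs them; the substitution $a_\ell=R_\ell+1$ is what makes the exponential factor come out with precisely the right form.
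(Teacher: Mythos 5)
The paper states this lemma without proof, and your argument is the natural one and is correct: the one-step estimate $R_{\ell+1}\le (1+C\Delta_N)R_\ell + C\Delta_N + O(\Delta^x_N)$ combined with the substitution $a_\ell=R_\ell+1$ yields exactly the stated exponential bound, and the second inclusion follows from $\ell\Delta_N\le T$. The only caveat is that absorbing the two $O(1/N)$ discretization errors (velocity quantization through $Q'$ and the initial rounding through $\A^x_N$) into the unit of slack requires $N$ large depending on $n$, $C$, $T$ --- which matches the implicit convention of the whole construction (cf.\ Lemma \ref{lem:As}); if one neglects those errors, as the statement evidently intends, your clean recursion already gives $R_\ell+1\le (R+1)e^{C\ell\Delta_N}$ for every $N$.
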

We can now state the main result of this Section:
\begin{theorem}\label{th:MDE-ex}
Given a PVF $V$ satisfying (H1) and (H2),  
for every $T>0$ and $\mu_0\in\PP_c(\R^n)$
there exists a solution $\mu:[0,T]\to\PP_c(\R^n)$
to the Cauchy problem (\ref{eq:MDE-Cauchy}) 
obtained as uniform-in-time limit of LASs for the Wasserstein metric.\\
Moreover, if $Supp(\mu_0)\subset B(0,R)$ then:
\begin{equation}\label{eq:Lip-time}
W(\mu(t),\mu(s))\leq C\,e^{C T}\, (R+1)\ |t-s|.
\end{equation}
\end{theorem}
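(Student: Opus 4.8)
The plan is to follow the classical approximation--compactness scheme: produce uniform a priori estimates on the LASs $\mu^N$ of Definition~\ref{def:LAS}, extract a uniformly convergent subsequence by an Arzel\`a--Ascoli argument, and then pass to the limit in the weak formulation~(\ref{eq:MDE-int}) using the continuity~(H2) of $V$. The Lipschitz-in-time bound~(\ref{eq:Lip-time}) will be established first for the LASs and then inherited by the limit.

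First I would collect the uniform estimates. Lemma~\ref{lem:bound-supp} already gives $Supp(\mu^N_\ell)\subset B(0,r)$ with $r:=e^{CT}(R+1)$, uniformly in $N$ and $\ell$; since on each interval $[\ell\Delta_N,(\ell+1)\Delta_N]$ the atoms move affinely along the velocities $v_j$, which by~(H1) and the support bound satisfy $|v_j|\le C(1+r)+\Delta^v_N\le C\,e^{CT}(R+1)$ for $N$ large (after enlarging $C$), the same ball $B(0,r)$ contains $Supp(\mu^N(t))$ for all $t\in[0,T]$. To obtain~(\ref{eq:Lip-time}) for $\mu^N$, note that within one interval the common indices $(i,j)$ furnish, via $(x_i,v_j)\mapsto x_i+tv_j$, a transference plan between $\mu^N(\ell\Delta_N+t_1)$ and $\mu^N(\ell\Delta_N+t_2)$, whence
\[
W(\mu^N(\ell\Delta_N+t_1),\mu^N(\ell\Delta_N+t_2))\le \sum_{ij} m^v_{ij}\,|v_j|\,|t_1-t_2|\le C\,e^{CT}(R+1)\,|t_1-t_2|.
\]
The interpolation~(\ref{eq:def-mu-int}) matches~(\ref{eq:def-rec}) at the grid times, so $\mu^N$ is continuous across them; the triangle inequality then extends the bound to arbitrary $s,t\in[0,T]$, giving~(\ref{eq:Lip-time}) uniformly in $N$. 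By the uniform support bound, all $\mu^N(t)$ lie in the set of probability measures supported in $B(0,r)$, which is compact for $W$ by Lemma~\ref{lem:unif-comp}; together with the equi-Lipschitz continuity just proved, Arzel\`a--Ascoli yields a subsequence $\mu^{N_k}\to\mu$ uniformly in time in $C([0,T];\PP_c(\R^n))$. The limit keeps $Supp(\mu(t))\subset B(0,r)$ and satisfies~(\ref{eq:Lip-time}) by passing to the limit in the distance, and since $\mu^N_0=\A^x_N(\mu_0)\to\mu_0$ by Lemma~\ref{lem:As} we have $\mu(0)=\mu_0$.

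The main work, and the point I expect to be most delicate, is to verify that $\mu$ solves~(\ref{eq:MDE-int}). Fix $f\in\C^\infty_c(\R^n)$. On each interval, differentiating $\int f\,d\mu^N(\ell\Delta_N+t)=\sum_{ij}m^v_{ij}\,f(x_i+tv_j)$ gives $\sum_{ij}m^v_{ij}\,\nabla f(x_i+tv_j)\cdot v_j$. I would control this through successive approximations, all uniform in the indices because every position and velocity lies in a fixed compact set: replacing $\nabla f(x_i+tv_j)$ by $\nabla f(x_i)$ costs $O(\Delta_N)$ since $\nabla f$ is Lipschitz and $t|v_j|\le\Delta_N\,C\,e^{CT}(R+1)$; the resulting sum equals $\int_{T\R^n}(\nabla f(x)\cdot v)\,d\A^v_N(V[\mu^N_\ell])$, which differs from $\int_{T\R^n}(\nabla f(x)\cdot v)\,dV[\mu^N_\ell]$ by $O(\Delta^v_N)$, using Lemma~\ref{lem:As}, the Kantorovich--Rubinstein duality~(\ref{eq:KR-duality}), and the fact that $(x,v)\mapsto\nabla f(x)\cdot v$ is Lipschitz on $B(0,r)\times B(0,C\,e^{CT}(R+1))$. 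Integrating over $[0,t]$ and summing the intervals, these errors add up to $O(1/N)$ and produce
\[
\int_{\R^n} f\,d\mu^N(t)=\int_{\R^n} f\,d\mu^N_0+\int_0^t\!\!\int_{T\R^n}(\nabla f(x)\cdot v)\,dV[\mu^N(\tau^N(s))](x,v)\,ds+O(1/N),
\]
where $\tau^N(s)$ is the grid time immediately below $s$, so $|\tau^N(s)-s|\le\Delta_N$.

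Finally I would pass to the limit along $N_k$. Since $W(\mu^{N_k}(\tau^{N_k}(s)),\mu(s))\le C\,e^{CT}(R+1)\,\Delta_{N_k}+\|\mu^{N_k}-\mu\|_\infty\to0$, continuity~(H2) gives $V[\mu^{N_k}(\tau^{N_k}(s))]\to V[\mu(s)]$ in $W^{T\R^n}$, so the inner integrals converge pointwise in $s$; being uniformly bounded (bounded $\nabla f$ and velocities), dominated convergence handles the time integral, while $\int f\,d\mu^{N_k}(t)\to\int f\,d\mu(t)$ and $\int f\,d\mu^{N_k}_0\to\int f\,d\mu_0$. This yields~(\ref{eq:MDE-int}). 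The $L^1$-integrability of $s\mapsto\int(\nabla f\cdot v)\,dV[\mu(s)]$ follows from its boundedness together with continuity of $s\mapsto\mu(s)$ and~(H2), and the absolute continuity of $t\mapsto\int f\,d\mu(t)$ from the identity~(\ref{eq:MDE-int}) itself, completing the verification that $\mu$ is a solution. The crux throughout is that the uniform compact support, guaranteed by~(H1), is precisely what turns the merely continuous $V$ and the unbounded test integrand $\nabla f\cdot v$ into Lipschitz data on a fixed compact set, so that Wasserstein convergence controls every error term.
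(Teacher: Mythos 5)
Your proposal is correct and follows essentially the same route as the paper: uniform support bounds from (H1), the transference plan moving each atom along its own velocity to get the equi-Lipschitz estimate in time, Ascoli--Arzel\`a for compactness, and then passage to the limit in (\ref{eq:MDE-int}) via Taylor expansion of $f$, Lemma \ref{lem:As}, Kantorovich--Rubinstein duality applied to the (locally) Lipschitz integrand $\nabla f(x)\cdot v$, continuity (H2), and dominated convergence. The only cosmetic difference is that you differentiate the interpolated curve and keep the grid-time argument $\tau^N(s)$ rather than telescoping over time steps and first comparing $V[\mu^N_\ell]$ with $V[\mu^N(t)]$; these are equivalent bookkeeping choices.
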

\begin{proof}
We have:
\[
W(\mu^N_{\ell+1},\mu^N_\ell)=W\left(\sum_{i,j} 
m^v_{ij}(V[\mu^N_\ell])\ \delta_{x_i+\Delta_N\, v_j},
\sum_i m^{N,\ell}_i \delta_{x_i}\right).
\]
Since $\sum_j m^v_{ij}(V[\mu^N_\ell])=m^{N,\ell}_i$, we can define a transference
plan from $\mu^N_\ell$ to $\mu^{N}_{\ell+1}$ by moving 
the mass $m^{N,\ell}_i \delta_{x_i}$
to $\sum_{j}  m^v_{ij}(V[\mu^N_\ell])\ \delta_{x_i+\Delta_N\, v_j}$.
Thus we obtain:
\[
W(\mu^N_{\ell+1},\mu^N_\ell)\leq 
\sum_{i,j} 
m^v_{ij}(V[\mu^N_\ell])\ |x_i+\Delta_N\, v_j-x_i|=
\Delta_N \sum_{i,j} 
m^v_{ij}(V[\mu^N_\ell])\ |v_j|.
\]
Let $R>0$ be such that $Supp(\mu_0)\subset B(0,R)$.
Using Lemma \ref{lem:bound-supp} and (H1), we deduce that 
$m^v_{ij}(V[\mu^N_\ell])\not= 0$ only if 
$|v_j|\leq C\,e^{C T}\, (R+1)$. Thus we get:
\[
W(\mu^N_{\ell+1},\mu^N_\ell)\leq C\,e^{C T}\, (R+1)\ \Delta_N.
\]
Repeating the same reasoning for $\mu(t)$ (see (\ref{eq:def-mu-int})) we get:
\begin{equation}\label{eq:Lip-time-approx}
W(\mu^N(t),\mu^N(s))\leq C\,e^{C T}\, (R+1)\ |t-s|.
\end{equation}
Therefore, the sequence $\mu^N:[0,T]\to \PP_c(\R^n)$ is uniformly Lipschiz
for the Wasserstein metric. By Ascoli-Arzel\'{a} Theorem, there exists a subsequence, 
still indicated by $\mu^N$, which converges uniformly to a Lipschitz curve 
$\mu:[0,T]\to \PP_c(\R^n)$ satisfying (\ref{eq:Lip-time}).

We now prove that the limit $\mu(t)$ satisfies (\ref{eq:MDE-int}).
Set $m^{N,\ell}_{ij}=m^v_{ij}(V[\mu^N_\ell])$, thus
$\sum_j m^{N,\ell}_{ij}=m^{N,\ell}_{i}$.
Given $f\in{\C}^\infty_c(\R^n)$ and $\bar{\ell}$, we compute:
\begin{eqnarray}
& \int_{\R^n}f\,d(\mu^N_{\bar\ell}-\mu^N_0)=
\int_{\R^n}f\,d\left(\sum_{\ell=0}^{\bar{\ell}-1}\,\big(\mu^N_{\ell+1}-\mu^N_\ell\big)\right)= \nonumber\\
& \sum_{\ell} \int_{\R^n}f\,d\left(\sum_{ij} m^{N,\ell}_{ij}\ \delta_{x_i+\Delta_N\, v_j} - \sum_i m^{N,\ell}_i\ \delta_{x_i}\right)=
\sum_{\ell}\sum_{ij} m^{N,\ell}_{ij} (f(x_i+\Delta_N\, v_j)-f(x_i))= \nonumber\\
& \sum_{\ell}\sum_{ij} m^{N,\ell}_{ij} \ \left[\Delta_N\ 
(\nabla f(x_i)\cdot v_j)+ \|f\|_{{\cal C}^2(B(0,C'))} o(\Delta_N)\right] \nonumber
\end{eqnarray}
where $C'=e^{C T} (R+1)$, so that $Supp(\mu^N_\ell)\subset B(0,C')$,
and $\lim_{N\to\infty}o(\Delta_N)/N=0$, thus
\[
= \sum_{\ell} \int_{\ell\Delta_N}^{(\ell+1)\Delta_N}\,\int_{T\R^n}
(\nabla f(x)\cdot v)\ d(\A^v_N(V[\mu^N_\ell]))(x,v)\ dt+
\|f\|_{{\cal C}^2(B(0,C'))}\ O(\Delta_N),
\]
where $\lim_{N\to\infty}O(\Delta_N)=0$.
Notice that $Supp(V[\mu^N_\ell])\subset B(0,C')\times B(0,C(1+C'))$.
If $x,y\in B(0,C')$ and $v,w\in B(0,C(1+C'))$, we can estimate:
\[
|\nabla f(x)\cdot v-\nabla f(y)\cdot w|\leq
|\nabla f(x)-\nabla f(y)|\ |v|+ |\nabla f(y)|\ |v-w|\leq
\]
\[
\|f\|_{{\cal C}^2(B(0,C'))}\ |x-y|\ C(1+C')+
\|f\|_{{\cal C}^1(B(0,C'))}\ |v-w| \leq
L(f) |(x,v)-(y,w)|,
\]
where $L(f)=\sqrt{2}\max\{\|f\|_{{\cal C}^2(B(0,C'))}\ C(1+C'),\|f\|_{{\cal C}^1(B(0,C'))}\}$. Thus $L(f)$ 
is a Lipschitz constant for the function $\nabla f(x)\cdot v$ on
$B(0,C')\times B(0,C(1+C'))$.
Define $\xi=\chi_{B(0,C')\times B(0,C(1+C'))}$ (the indicator function
of the set $B(0,C')\times B(0,C(1+C'))$, 
then using  the Kantorovich-Rubinstein duality (\ref{eq:KR-duality}) for $X=T\R^n$
and Lemma \ref{lem:As}, we get:
\begin{eqnarray}\label{eq:AvV-V}
& L(f)\left|\int_{T\R^n} \frac{\nabla f(x)\cdot v}{L(f)}
\ d(\A^v_N(V[\mu^N_\ell])-V[\mu^N_\ell])(x,v) \right|=\nonumber\\
& L(f)\left|\int_{T\R^n} \frac{(\nabla f(x)\cdot v)\xi(x,v)}{L(f)}
\ d(\A^v_N(V[\mu^N_\ell])-V[\mu^N_\ell])(x,v) \right|
\leq \nonumber\\
& L(f)\ W^{T\R^n}(\A^v_N(V[\mu^N_\ell]),V[\mu^N_\ell])
\leq L(f)\ \Delta^v_N.
\end{eqnarray}
In the same way, for every $t\in [\ell\Delta_N, (\ell+1)\Delta_N[$, it holds:
\[
L(f)\left|\int_{T\R^n} \frac{\nabla f(x)\cdot v}{L(f)}
\ d(V[\mu^N_\ell]-V[\mu^N(t)])(x,v) \right|
\leq L(f)\ W^{T\R^n}(V[\mu^N_\ell],V[\mu^N(t)]).
\]
Now, $W^{T\R^n}(V[\mu^N_\ell],V[\mu^N(t)])\leq 2C'+2C(1+C')$,
for every $t\in [\ell\Delta_N, (\ell+1)\Delta_N[$, and,
from (\ref{eq:Lip-time-approx}) and (H2), we have that 
$W^{T\R^n}(V[\mu^N_\ell],V[\mu^N(t)]) \to 0$ as $N\to\infty$.
Thus by Lebesgue dominated convergence:
\begin{equation}\label{eq:V-Vt}
\left|\sum_\ell\ \int_{\ell\Delta_N}^{(\ell+1)\Delta_N}\,
\left(\int_{T\R^n} (\nabla f(x)\cdot v)
\ d(V[\mu^N_\ell]-V[\mu^N(t)])(x,v)\right)\ dt \right|\to 0.
\end{equation}
From (\ref{eq:AvV-V}) and (\ref{eq:V-Vt}) we get:
\[
\int_{\R^n}f\,d(\mu^N_{\bar\ell}-\mu^N_0)=
\int_{0}^{\bar{\ell}\Delta_N} \,
\int_{T\R^n}(\nabla f(x)\cdot v)\ d(V[\mu^N(t)])(x,v)\ dt
+O(\Delta_N).
\]
Since $\bar{\ell}$ is arbitrary and the integrands are bounded, 
using (\ref{eq:Lip-time-approx}) we conclude:
\begin{equation}\label{eq:approx-sol-N}
\int_{\R^n}f\,d(\mu^N(t)-\mu^N_0)=
\int_{0}^{t} \,\int_{T\R^n}(\nabla f(x)\cdot v)\ d(V[\mu^N(t)])(x,v)\ dt+O(\Delta_N).
\end{equation}
Now, from  the Kantorovich-Rubinstein duality (\ref{eq:KR-duality}) and Lemma \ref{lem:As}, reasoning as above we get the following estimates:
\begin{eqnarray}
& \left|\int_{\R^n}f\,d(\mu^N_0(t)-\mu_0(t))\right|\leq\|f\|_{{\cal C}^1(B(0,C'))}  \ \Delta^x_N, \nonumber\\
& \left|\int_{\R^n}f\,d(\mu^N(t)-\mu(t))\right|\leq\|f\|_{{\cal C}^1(B(0,C'))}  \ W(\mu^N(t),\mu(t)), \nonumber\\
& \left| \int_0^{t} (\nabla f(x)\cdot v)\ 
d(V[\mu^N(s)]-V[\mu(s)])\ ds \right|\leq L(f)
\ \int_0^{t} W^{T\R^n}(V[\mu^N(s)],V[\mu(s)])\ ds.\nonumber
\end{eqnarray}
The integral argument on the right-hand side (of last inequality)
is bounded and, by (H2), converges  to zero as $N$ tends to infinity .
Thus, by Lebesgue dominated convergence, it tends to zero.
Passing to the limit in (\ref{eq:approx-sol-N}),
we have that $\mu(t)$ is a solution to (\ref{eq:MDE-Cauchy}).
\end{proof}

\section{Lipschitz semigroup of solutions to MDEs}\label{sec:Lip}
We now investigate continuous dependence from initial data.
More precisely, we provide a new condition ensuring the existence of
a Lipschitz semigroup of solutions obtained as limit of LASs.\\
Notice that $V[\mu]$ is supported on $T\R^n$ but the two components $(x,v)$ have
different meanings, indeed $v$ represents a tangent vector thus an infinitesimal displacement.
For this reason, instead of using $W^{T\R^n}$, we are going to introduce another
concept to measure distances among elements of $\PP(T\R^N)$.
\begin{definition}
Consider $V_i\in\PP_c(\R^n)$, $i=1,2$, and denote by $\mu_i$
the marginal over the base, i.e. $\pi_1\# V_i=\mu_i$.
We define the following quantity:
\begin{equation}\label{eq:WW}
\WW (V_1,V_2)=\inf \left\{
\int_{T\R^n\times T\R^n} |v-w| \ dT(x,v,y,w)\ :
T\in P(V_1,V_2), \ \pi_{13}\# T\in P^{opt}(\mu_1,\mu_2) \right\}
\end{equation}
\end{definition}
The condition $\pi_{13}\# T\in P^{opt}(\mu_1,\mu_2)$ tells us that $T$ acts optimally on the base transporting $\mu_1$ to $\mu_2$. Therefore $\WW$
gives the optimal transport distance of the fiber components 
based on optimal ways to transport the marginals on the base.
\begin{remark}
Notice that $\WW$ is not a metric  since it can vanish 
for distinct elements of $\PP(T\R^N)$. It would be tempting to add the term $|x-y|$ 
to the integrand in (\ref{eq:WW}) (or a norm of $(x,v,y,w)$) but
we would not obtain a metric, because the triangular inequality does not hold.
A simple example is obtained by setting in $T\R^2$:
$V_1=\frac12 \delta_{((0,0),(1,0))}+\frac12  \delta_{((1,0),(3,0))}$,
$V_2=\frac12 \delta_{((0,1),(1,0))}+\frac12  \delta_{((1,-1),(3,0))}$ and
$V_3=\frac12 \delta_{((1,1),(1,0))}+\frac12  \delta_{((0,-1),(3,0))}$.
There exists a unique optimal transference plan
from $\mu_1=\pi_1\#V_1$ to $\mu_2 = \pi_1\#V_2$,
moving the mass from $(0,0)$ to $(0,1)$ and from $(1,0)$ to $(1,-1)$,
a unique optimal transference plan from $\mu_2$ to $\mu_3=\pi_1\#V_3$,
moving the mass from $(0,1)$ to $(1,1)$ and from $(1,-1)$ to $(0,-1)$,
and a unique optimal transference plan from $\mu_1$ to $\mu_3$,
moving the mass from $(0,0)$ to $(0,-1)$ and from $(1,0)$ to $(1,1)$.
Then, the set  ${\cal T}_{12}=
\{T \in P(V_1,V_2) : \pi_{13}\#T \in P^{opt}(\mu_1,\mu_2) \}$
has a unique element and 
$\inf_{T\in {\cal T}_{12}} \int (|x-y|+|v-w|) dT = 1$.
Defining similarly ${\cal T}_{23}$ and ${\cal T}_{13}$, we get
$\inf_{T\in {\cal T}_{23}} \int (|x-y|+|v-w|) dT = 1$ and
$\inf_{T\in {\cal T}_{13}} \int (|x-y|+|v-w|) dT = 3$.
\end{remark}
We are now ready to state a new assumption, which is a local Lipschitz-type
condition on the map $\mu\to V[\mu]$ for $\WW$.
We require that:
\begin{itemize}
\item[(H3)] For every $R>0$ there exists $K=K(R)>0$ such that if 
$Supp(\mu),Supp(\nu)\subset B(0,R)$ then
\begin{equation}
\WW(V[\mu],V[\nu])\leq K\ W(\mu,\nu).
\end{equation}
\end{itemize}
The quantity $\WW$ in general can not compare to $W^{T\R^n}$, which weights
in the same way the base and the fiber. However, we have the following:
\begin{lemma}\label{lem:WT}
Given $\mu$, $\nu\in\PP(\R^n)$, it holds 
\begin{equation}
W^{T\R^n}(V[\mu],V[\nu])\leq  \WW(V[\mu],V[\nu]) + W(\mu,\nu).
\end{equation}
In particular (H3) implies local Lipschitz continuity of $V$ w.r.t. $W^{T\R^n}$.
\end{lemma}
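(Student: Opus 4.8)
The plan is to exploit that $\WW$ and $W^{T\R^n}$ are both infima over (subsets of) the same family of transference plans, differing only in the cost integrated and in the admissibility constraint. Write $V_1=V[\mu]$ and $V_2=V[\nu]$, and recall that the metric on $T\R^n=\R^n\times\R^n$ is the Euclidean norm on $\R^{2n}$, so that for all $(x,v),(y,w)$ one has the elementary bound $\sqrt{|x-y|^2+|v-w|^2}\le |x-y|+|v-w|$.

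First I would fix $\varepsilon>0$ and choose a near-optimal competitor for $\WW$: by definition (\ref{eq:WW}) there is $T\in P(V_1,V_2)$ with $\pi_{13}\# T\in P^{opt}(\mu,\nu)$ and $\int_{T\R^n\times T\R^n}|v-w|\,dT\le \WW(V_1,V_2)+\varepsilon$. The admissible set here is nonempty: glue any optimal base plan $\gamma\in P^{opt}(\mu,\nu)$ with the fiber conditionals of $V_1$ and $V_2$ obtained by disintegration over the base, which produces a $T$ with the correct marginals $V_1,V_2$ and $\pi_{13}\# T=\gamma$. Next, since this $T$ is in particular an element of $P(V_1,V_2)$, it is admissible for $W^{T\R^n}(V_1,V_2)$, whence
\[
W^{T\R^n}(V_1,V_2)\le \int \sqrt{|x-y|^2+|v-w|^2}\,dT \le \int |x-y|\,dT+\int |v-w|\,dT.
\]
The second summand is at most $\WW(V_1,V_2)+\varepsilon$ by the choice of $T$. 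For the first summand I would push forward by $\pi_{13}$ and use precisely the optimality constraint: $\int |x-y|\,dT=\int_{(\R^n)^2}|x-y|\,d(\pi_{13}\# T)(x,y)=W(\mu,\nu)$, the last equality holding because $\pi_{13}\# T$ is an optimal plan for the base transport. Combining and letting $\varepsilon\to 0$ yields the claimed inequality.

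The statement is essentially mechanical once the definitions are unpacked, so there is no serious obstacle; the only point requiring care is the direction of the optimality constraint. It must be used as an \emph{equality} $\int|x-y|\,d(\pi_{13}\# T)=W(\mu,\nu)$, valid because $\pi_{13}\# T$ is optimal, not merely the general lower bound $\int|x-y|\,d\gamma\ge W(\mu,\nu)$ that holds for arbitrary base plans $\gamma$; this is exactly where the restriction built into the definition of $\WW$ pays off. Finally, the ``in particular'' clause follows immediately: if $Supp(\mu),Supp(\nu)\subset B(0,R)$, then (H3) gives $\WW(V[\mu],V[\nu])\le K(R)\,W(\mu,\nu)$, and substituting into the main inequality gives $W^{T\R^n}(V[\mu],V[\nu])\le (K(R)+1)\,W(\mu,\nu)$, i.e.\ local Lipschitz continuity of $V$ for $W^{T\R^n}$.
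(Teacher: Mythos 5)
Your proof is correct and follows essentially the same route as the paper: bound the Euclidean cost on $T\R^n$ by $|x-y|+|v-w|$, restrict attention to plans whose base marginal is optimal (so the $|x-y|$ integral equals $W(\mu,\nu)$ exactly), and read off $\WW$ from the $|v-w|$ part. The paper phrases this as a chain of inequalities between infima rather than via an $\varepsilon$-near-optimal competitor, but the substance is identical; your additional remark that the admissible set in the definition of $\WW$ is nonempty (by gluing an optimal base plan with the fiber disintegrations) is a correct detail the paper leaves implicit.
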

\begin{proof}
By definition we have:
\begin{eqnarray}
& W^{T\R^n}(V[\mu],V[\nu])=\inf_{T\in P(V[\mu],V[\nu])} \int_{(T\R^n)^2}
|(x,v)-(y,w)| \ dT(x,v,y,w)\leq \nonumber\\
& \inf_{T\in P(V[\mu],V[\nu])} \int_{(T\R^n)^2} (|x-y|+|v-w|) \ dT(x,v,y,w)\leq \nonumber\\
& \inf_{T\in P(V[\mu],V[\nu]), \pi_{13}\# T\in P^{opt}(\mu,\nu)} \int_{(T\R^n)^2} (|x-y|+|v-w|) \ dT(x,v,y,w)= \nonumber\\
& W(\mu,\nu)+\WW(V[\mu],V[\nu]).\nonumber
\end{eqnarray}
\end{proof}
\begin{remark}
The converse of Lemma \ref{lem:WT} does not hold true,
since $\WW$ can not be estimated in terms of $W$ and $W^{T\R^n}$.
To see this consider the PVF on $\R$ ($n=1$) defined by
$V[\mu]=\mu\otimes \varphi(\mu,x) \lambda$ with
$\lambda$ the Lebesgue measure on $\R$ and
\[
\varphi(\mu,x)=\sin\left(\frac{2\pi\ x}{Var(\mu)}\right)
\]
where $Var(\mu)=\int (x-\bar{x})^2\ d\mu$, $\bar{x}=\int x\,d\mu$, 
is the variance of $\mu$. Let $\mu_m=m\chi_{[0,\frac{1}{m}]}\,\lambda$.
Then we easly compute $\int x\,d\mu_m=\frac{1}{2m}$ and
$Var(\mu_m)=\frac{1}{12m^2}$, thus $\varphi(\mu_m,x)$  
has period $\frac{1}{12m^2}$.
Define also $\mu_m'=m\chi_{[\frac{1}{24m^2},\frac{1}{m}+\frac{1}{24m^2}]}
\,\lambda$. The Wasserstein distance between 
$\mu_{m}$ and $\mu_{m}'$ is realized by the map $T(x)=x+\frac{1}{24m^2}$,
see Theorem 2.18 and Remark 2.19 in \cite{Villani}. Therefore we get:
\[
W(\mu_{m},\mu_m')=\frac{1}{24m^2}.
\]
Since $\varphi(\mu_m,x)$  has period $\frac{1}{12m^2}$,
the map $T$ sends the intervals where $\varphi(\mu_{m},x)$ is positive 
to intervals where $\varphi(\mu_m',x)$ is negative and viceversa.
Therefore we easily compute:
\[
\WW(V[\mu_{m}],V[\mu_{m}'])= m\,\int_0^{\frac{1}{m}}
2\ |\sin(24\,m^2\,\pi\,x)|\, dx=\frac{4}{\pi},
\]
and assumption (H3) does not hold true.\\
On the other side, to estimate $W^{T\R^n}(V[\mu_{m}],V[\mu_m'])$,
consider the map $T(x)=x+\frac{1}{12m^2}$ sending
the intervals where $\varphi(\mu_{m},x)$ is positive 
to intervals where $\varphi(\mu_m',x)$ is also positive.
To be precise we have also to send the last inteval 
$[\frac{1}{m}-\frac{1}{24m^2},\frac{1}{m}]$ of the domain of $\mu_m$ to the first
interval $[\frac{1}{24m^2},\frac{1}{12m^2}]$ of the domain of $\mu_m'$.
Therefore we get:
\[
W^{T\R^n}(V[\mu_{m}],V[\mu_m'])\leq 
\frac{1}{12m^2}+\frac{1}{24m^2}.
\]
\end{remark}

We are now ready to prove existence of semigroups of solutions.
First we give the following:
\begin{definition}\label{def:Lip-semigroup}
Consider a PVF $V$ satisfying (H1) and $T>0$.
A Lipschitz semigroup for (\ref{eq:MDE})
is a map $S:[0,T]\times \PP_c(\R^n) \to\PP_c(\R^n)$ such that
for every $\mu,\nu \in \PP_c(\R^n)$ and $t,s\in [0,T]$ the following holds:
\begin{itemize}
\item[i)] $S_0\mu=\mu$ and $S_t\,S_s\,\mu=S_{t+s}\,\mu$;
\item[ii)] the map $t\mapsto S_t\mu$ is a solution to (\ref{eq:MDE});  
\item[iii)] for every $R>0$ there exists $C(R)>0$ such that
if $Supp(\mu), Supp(\nu) \subset B(0,R)$ then:
\begin{equation}\label{eq:supp-bound}
Supp(S_t\mu)\subset B(0,e^{Ct} (R+1))
\end{equation}
\begin{equation}\label{eq:cont-dep}
W(S_t\mu,S_t\nu)\leq e^{C(R)t}W(\mu,\nu),
\end{equation}
\begin{equation}\label{eq:Lip-time-cont}
W(S_t\mu,S_s\mu)\leq C(R)\ |t-s|.
\end{equation}
\end{itemize}
\end{definition}
Next Theorem provides existence of a Lipschitz semigroup of solutions to an MDE, 
obtained via limit of LAS:
\begin{theorem}\label{th:MDE}
Given $V$ satisfying (H1) and (H3), and $T>0$, 
there exists a Lipschitz semigroup of solutions to (\ref{eq:MDE}),
obtained passing to the limit in LASs.
\end{theorem}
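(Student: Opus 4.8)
The plan is to build on Theorem \ref{th:MDE-ex}, the genuinely new ingredient being the continuous-dependence estimate (\ref{eq:cont-dep}). First observe that, by Lemma \ref{lem:WT}, assumption (H3) implies (H2); hence Theorem \ref{th:MDE-ex} applies and, for every $\mu_0\in\PP_c(\R^n)$, the LASs $\mu^N$ admit subsequences converging uniformly in time to solutions of (\ref{eq:MDE-Cauchy}). Any such limit automatically satisfies property (ii), the support bound (\ref{eq:supp-bound}) (from Lemma \ref{lem:bound-supp}) and the time-Lipschitz estimate (\ref{eq:Lip-time-cont}) (from (\ref{eq:Lip-time})). It remains to establish the uniform-in-initial-datum estimate (\ref{eq:cont-dep}) and to assemble the limits into a semigroup.

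The heart of the argument is a one-step stability estimate for the LASs issued from two data $\mu_0,\nu_0$ supported in $B(0,R)$. Writing the Euler step as the pushforward $\mu^N_{\ell+1}=(\Phi_{\Delta_N})\#\A^v_N(V[\mu^N_\ell])$ with $\Phi_{\Delta_N}(x,v)=x+\Delta_N v$, I would first trade $\A^v_N(V[\cdot])$ for $V[\cdot]$: the velocity discretization keeps the base fixed and displaces the fiber by at most $\Delta^v_N$ (Lemma \ref{lem:As}), so pushing forward by $\Phi_{\Delta_N}$ costs at most $\Delta_N\,\Delta^v_N=\Delta^x_N$ per step. For the exact step, let $T$ be a near-optimal competitor in (\ref{eq:WW}) for $\WW(V[\mu^N_\ell],V[\nu^N_\ell])$, so that its base marginal $\pi_{13}\#T$ is an optimal plan between $\mu^N_\ell$ and $\nu^N_\ell$. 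Pushing $T$ forward by $(x,v,y,w)\mapsto(x+\Delta_N v,\ y+\Delta_N w)$ produces a plan between the stepped measures whose cost splits as
\[
\int_{(T\R^n)^2}\big|(x+\Delta_N v)-(y+\Delta_N w)\big|\,dT\le \int|x-y|\,dT+\Delta_N\int|v-w|\,dT .
\]
Here the first term equals $W(\mu^N_\ell,\nu^N_\ell)$ precisely because $\pi_{13}\#T$ is base-optimal, while the second is controlled, via (H3), by $\Delta_N\,K(R')\,W(\mu^N_\ell,\nu^N_\ell)$, where $R'=e^{CT}(R+1)$ bounds all the supports by Lemma \ref{lem:bound-supp}. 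Collecting both contributions and letting the slack in $T$ tend to zero gives
\[
W(\mu^N_{\ell+1},\nu^N_{\ell+1})\le (1+K\Delta_N)\,W(\mu^N_\ell,\nu^N_\ell)+O(\Delta^x_N).
\]

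A discrete Gronwall inequality then yields $W(\mu^N_{\bar\ell},\nu^N_{\bar\ell})\le e^{K\bar\ell\Delta_N}\,W(\mu^N_0,\nu^N_0)+O(\Delta_N)$, the accumulated discretization error being $O(\Delta_N)$ since there are $O(1/\Delta_N)$ steps each of size $O(\Delta^x_N)=O(\Delta_N^2)$ amplified by $e^{KT}$. As $W(\mu^N_0,\nu^N_0)=W(\A^x_N\mu_0,\A^x_N\nu_0)\to W(\mu_0,\nu_0)$ by Lemma \ref{lem:As}, passing to the limit along a common convergent subsequence produces exactly (\ref{eq:cont-dep}). To assemble the semigroup, I would fix a countable dense set $\DD\subset\PP_c(\R^n)$ and, by diagonal extraction over $\DD$, select one subsequence $N_k$ along which $\mu^{N_k}(\cdot;\eta)$ converges for every $\eta\in\DD$; define $S_t\eta$ as the limit. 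Estimate (\ref{eq:cont-dep}) makes $\eta\mapsto S_t\eta$ locally Lipschitz on $\DD$, hence it extends uniquely to $\PP_c(\R^n)$ preserving (\ref{eq:cont-dep}); the bounds (\ref{eq:supp-bound}) and (\ref{eq:Lip-time-cont}) pass to the limit, and each curve $t\mapsto S_t\mu_0$ is a solution of (\ref{eq:MDE}) — for $\eta\in\DD$ by Theorem \ref{th:MDE-ex}, and for general $\mu_0$ by approximating within $\DD$ and using continuity of $V$ together with (\ref{eq:cont-dep}) to pass to the limit in (\ref{eq:MDE-int}) — which is property (ii).

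For the semigroup law (i), the device is that at grid times the scheme restarts exactly: the LAS from $\mu_0$ after time $s=p\Delta_{N_k}$ coincides with the LAS issued from the lattice measure $\mu^{N_k}(s;\mu_0)$, so its limit is $S_{s+t}\mu_0$, while $\mu^{N_k}(s;\mu_0)\to S_s\mu_0$; applying (\ref{eq:cont-dep}) with the varying data $\mu^{N_k}(s;\mu_0)$ identifies this limit with $S_t(S_s\mu_0)$, and one extends to all $s$ by time-continuity. I expect this final step to be the main obstacle: the stability estimate only compares LASs sharing the same discretization level $N$, so both the subsequence-independence of the limit and the identity $S_{t+s}=S_tS_s$ ultimately require controlling LASs at \emph{different} levels with the same datum. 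The exact grid-time restart property is what reduces these comparisons back to (\ref{eq:cont-dep}); making this argument uniform across the interpolation times and for data outside $\DD$ is where the care must be concentrated.
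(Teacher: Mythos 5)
Your proposal follows essentially the same route as the paper's proof: the one-step Wasserstein stability estimate obtained by pushing a near-optimal plan from the definition of $\WW$ forward through the Euler map $(x,v,y,w)\mapsto(x+\Delta_N v,\,y+\Delta_N w)$, splitting the cost into the base-optimal part $W(\mu^N_\ell,\nu^N_\ell)$ plus the fiber part controlled by (H3), then discrete Gronwall, diagonal extraction over a countable dense set of Dirac sums, extension by the Lipschitz estimate, and the exact grid-time restart identity $\mu^N_{\ell'}(\mu^N_\ell(\mu))=\mu^N_{\ell+\ell'}(\mu)$ for the semigroup law. The point you flag as delicate (comparing LASs at different levels and handling non-grid times and data outside the dense set) is handled in the paper by exactly the $\epsilon$-approximation argument you sketch, and your explicit accounting of the $\A^v_N$-versus-$V$ discrepancy is if anything slightly more careful than the paper's.
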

\begin{proof}
We first prove Wasserstein estimates on LASs for different initial data.
Fix $\mu_0,\nu_0\in\PP_c(\R^n)$ and call $\mu^N$, respectively $\nu^N$, the LAS defined using $\mu_0$, respectively $\nu_0$, as initial datum.
First, from Lemma \ref{lem:As} we get:
\begin{equation}\label{eq:in-est}
W(\mu_0^N,\nu_0^{N})=W(\A^x_N(\mu_0),\A^x_{N}(\nu_0))\leq 
W(\mu_0,\nu_0)+2\,\Delta^x_{N}.
\end{equation}
Let us now estimate the Wasserstein distance between $\mu^N_\ell$
and $\nu^{N}_{\ell}$ by recursion:
\[
W(\mu^N_{\ell+1},\nu^{N}_{\ell+1})=
W\left( \sum_{i,j} 
m^v_{ij}(V[\mu^N_\ell])\ \delta_{x_i+\Delta_{N}\, v_j},
\sum_{i.j} 
m^v_{ij}(V[\nu^{N}_{\ell}])\ \delta_{x_i+\Delta_{N}\, v_j}\right).
\]
Let $R>0$ be such that $Supp(\mu_0), Supp(\nu_0)\subset B(0,R)$.
By Lemma \ref{lem:bound-supp}, the supports of $\mu^N$ and $\nu^N$
are uniformly contained in $B(0,e^{Ct}(R+1))$, thus, by assumption (H3), 
there exists $K=K(e^{Ct}(R+1))>0$ such that:
\[
\WW(V[\mu^N_\ell],V[\nu^{N}_{\ell}])
\leq K\,W(\mu^N_\ell, \nu^{N}_{\ell})
\]
thus there exists 
$T\in P(V[\mu^N_\ell],V[\nu^{N}_{\ell}])$ such that: 
\begin{equation}\label{eq:T-est}
\int_{T\R^n\times T\R^n} |v-w| \ dT(x,v,y,w)\leq 
K\ W(\mu^N_\ell, \nu^{N}_{\ell})+\Delta_N,
\end{equation}
and $\pi_{13}\# T\in P^{opt}(\mu^N_\ell, \nu^{N}_{\ell})$.
We will now construct a transference plan from
$\mu^N_{\ell+1}$ to $\nu^{N}_{\ell+1}$ by moving masses
using the plan $T$. More precisely, 
define $\tau_{ij}\in \PP_c((\R^n)^2)$ by
$\tau_{ij}(A,B)=T(\{(x_i,v,x_j,w):x_i+\Delta_{N}\,v\in A, 
x_j+\Delta_{N}\,w\in B\})$. 
In other words if $T$  moves a mass from $\delta_{(x_i,v)}$
to $\delta_{(x_j,w)}$ then $\tau_{ij}$ moves the same mass from $\delta_{x_i+\Delta_{N}\,v}$ to $\delta_{x_j+\Delta_{N}\,w}$.
Defining $\tau=\sum_{ij} \tau_{ij}\in P(\mu^N_{\ell+1},\nu^{N}_{\ell+1})$, we get:
\begin{eqnarray}
& W(\mu^N_{\ell+1},\nu^{N}_{\ell+1})\leq \int_{\R^n\times \R^n} |x-y|\ d\tau(x,y)=\nonumber\\
& \int_{T\R^n\times T\R^n} |(x+\Delta_{N}\,v)-(y+\Delta_{N}\,w)|\ dT(x,v,y,w)\leq \nonumber\\
& \int_{T\R^n\times T\R^n} |x-y|\ dT(x,v,y,w)+ \int_{T\R^n\times T\R^n} \Delta_{N}\,|v-y|\ dT(x,v,y,w)
\dot= I_1+I_2.\nonumber
\end{eqnarray}
Since $\pi_{13}\# T\in P^{opt}(\mu^N_\ell, \nu^{N}_{\ell})$,
we have:
\[
I_1 = W(\mu^N_\ell, \nu^{N}_{\ell}),
\]
while by (\ref{eq:T-est}) we get:
\[
I_2\leq \Delta_{N}\ \left(
K\ W(\mu^N_\ell, \nu^{N}_{\ell})+\Delta_N\right).
\]
Finally it holds:
\begin{equation}\label{eq:rec-est12}
W(\mu^N_{\ell+1},\nu^{N}_{\ell+1})\leq
(1+K\,\Delta_{N})\ W(\mu^N_\ell, \nu^{N}_{\ell})+\Delta_{N}^2.
\end{equation}
Combining (\ref{eq:in-est}) and (\ref{eq:rec-est12}), we get:
\begin{eqnarray}\label{eq:fin-est}
W(\mu^N_\ell,\nu^{N}_{\ell})
&\leq & (1+K\,\Delta_{N})^{\ell} \ (W(\mu_0,\nu_0)+\Delta^x_{N}) +
\sum_{k=0}^{\ell-1} (1+K\,\Delta_{N})^k\ \Delta_N^2 \nonumber\\
&\leq & e^{K\,\ell\Delta_{N}}\ (W(\mu_0,\nu_0)+\Delta^x_{N})  
+ \frac{e^{K\,(\ell-1)l\Delta_{N}}-1}{K}\ \Delta_{N}\nonumber\\
&\leq & e^{K\,\ell\Delta_{N}}\ \left(W(\mu_0,\nu_0)+\Delta^x_{N}+\frac{\Delta_N}{K}\right).
\end{eqnarray}
Now, define the countable set 
$\DD^q=\{\mu_0\in \PP_c(\R^n): \mu_0=\sum_{i=1}^N m_i\delta_{x_i},
N\in\N, 0<m_i\in\Q, \sum_i m_i=1, x_i\in\Q^n \}$. 
By Lemma \ref{lem:WT}, hypothesis (H3) implies (H2), thus for every $\mu_0\in\DD^q$, 
we can apply Theorem \ref{th:MDE-ex} and find a subsequence of $\mu^N$ which converges 
uniformly on $[0,T]$ for the Wasserstein metric to a solution satisfying (\ref{eq:Lip-time}). 
Using a diagonal argument we find a subsequence, still indicated by $\mu^N$, which converges uniformly
on $[0,T]$ for every $\mu_0\in\DD^q$ to a solution $S_t\mu_0$. Moreover,
given $\mu,\nu\in\DD^q$, with $Supp(\mu),Supp(\nu)\subset B(0,R)$,
passing to the limit in (\ref{eq:fin-est}) and using (\ref{eq:Lip-time}), we have for $K=K(e^{CT}(R+1))$:
\begin{equation}\label{eq:cont-dep-on-D}
W(S_t\mu,S_t\nu)\leq e^{Kt} W(\mu,\nu).
\end{equation}
By (\ref{eq:cont-dep-on-D}) and the density of $\DD^q$ in $\PP_c(\R^n)$, we can uniquely extend
the map $S$ to the whole set $\PP_c(\R^n)$ by approximation.
Using the Kantorovich-Rubinstein duality, as in the proof of Theorem \ref{th:MDE-ex}, 
we conclude that ii) of Definition \ref{def:Lip-semigroup} holds true for $S$  
on the whole set $\PP_c(\R^n)$. 
Moreover, again by approximation, we get that (\ref{eq:Lip-time}) and (\ref{eq:cont-dep-on-D}) 
hold on the whole set  $\PP_c(\R^n)$, thus $S$ satisfies also iii).\\
Let us now prove i) of Definition \ref{def:Lip-semigroup}. From (\ref{eq:in-est}), we get $S_0\mu=\mu$.
Consider $\mu\in\PP_c(\R^n)$ and $t,s\in [0,T]$.
We use the notation $\mu^{N}_\ell(\mu)$ to indicate the LAS defined
having $\mu$ as initial datum. Then, for every $\epsilon$ there exists $N$
such that if $\ell=\lfloor Ns\rfloor$ (where $\lfloor s\rfloor = \sup \{n\in\N: n\leq s\}$ is 
the usual floor function) and $\ell'\lfloor Nt\rfloor$ then:
\begin{equation}\label{eq:i-first}
W(S_s\mu,\mu^N_\ell(\mu))\leq\epsilon,\quad
W(S_t\mu^N_\ell(\mu),\mu^{N}_{\ell'}(\mu^N_\ell(\mu)))\leq\epsilon.
\end{equation}
Notice that $|\ell+\ell'- \lfloor N(t+s)\rfloor|\leq 1$, thus, possibly changing $N$,
we also get:
\begin{equation}\label{eq:i-sec}
W(\mu^{N}_{\ell+\ell'}(\mu),S_{t+s}\mu)\leq\epsilon.
\end{equation}
By definition, $\mu^{N}_{\ell'}(\mu^N_\ell(\mu))= \mu^{N}_{\ell+\ell'}(\mu)$, thus using 
(\ref{eq:cont-dep-on-D}), (\ref{eq:i-first}) and (\ref{eq:i-sec}) we estimate:
\begin{eqnarray}
& W(S_tS_s\mu,S_{t+s}\mu)\leq W(S_tS_s\mu,S_t\mu^N_\ell(\mu))+
 W(S_t\mu^N_\ell(\mu),\mu^{N}_{\ell'}(\mu^N_\ell(\mu)))+\nonumber\\
& W(\mu^{N}_{\ell'}(\mu^N_\ell(\mu)),\mu^{N}_{\ell+\ell'}(\mu_))+
W(\mu^{N}_{\ell+\ell'}(\mu),S_{t+s}\mu)\leq e^{Kt} W(S_s\mu,\mu^N_\ell(\mu))+2\epsilon
\leq e^{Kt}\epsilon +2\epsilon.\nonumber
\end{eqnarray}
For the arbitrariety of $\epsilon$, also i) holds true for $S$ and the proof is complete.
\end{proof}

\section{Uniqueness of solutions semigroup to MDEs}\label{sec:Dirac}
Definition \ref{def:sol-MDE} is not expected to guarantee uniqueness in general,
see Example \ref{ex:non-uni} below.
However we can obtain uniqueness of a Lipschitz semigroup
prescribing the small-time evolution of finite sums of Dirac deltas.

We first define the concept of Dirac germ.
\begin{definition}\label{def:Dir-germ}
Consider a PVF $V$ satisfying (H1) and define
$\DD=\{\mu\in \PP_c(\R^n): \mu=\sum_{i=1}^N m_i\delta_{x_i},
N\in\N, 0<m_i, \sum_i m_i=1, x_i\in\R^n \}$.
A Dirac germ $\gamma$ is a map assigning to every $\mu\in\DD$
a Lipschitz curve $\gamma_\mu:[0,\epsilon(\mu)]\to \PP_c(\R^n)$, 
$\epsilon(\mu)>0$ uniformly positive for uniformly bounded supports,
such that $\gamma_\mu(0)=\mu$ and $\gamma_\mu$ is a solution to
(\ref{eq:MDE}).
\end{definition}
In rough words a Dirac germ is a prescribed evolution of solutions
for finite sums of Dirac deltas (for sufficiently small times).
\begin{definition}\label{def:Dir-semigroup}
Consider a PVF $V$ satisfying (H1), $T>0$ and a Dirac germ $\gamma$. 
A Dirac semigroup, compatible with the Dirac germ $\gamma$,
is a Lipschitz semigroup $S:[0,T]\times \PP_c(\R^n) \to\PP_c(\R^n)$ 
for (\ref{eq:MDE}) such that the following holds.
For every $R>0$, denoting
$\DD_R=\{\mu\in \DD: Supp(\mu)\subset B(0,R)\}$,
there exists $C(R)>0$ such that
for every $t\in [0,\inf_{\mu\in\DD_R}\epsilon(\mu)]$ it holds:
\begin{equation}
\sup_{\mu\in\DD_R} W(S_t\mu,\gamma_{\mu}(t))\leq C(R)\ t^2.
\end{equation}
\end{definition}
In other words $S$ is a Lipschitz semigroup whose trajectories are well approximated by the Dirac germ.
To prove uniqueness of a Dirac semigroup (compatible with a given Dirac germ),
we use the following Lemma:
\begin{lemma}\label{lem:Sem-est}
Let $S$ be a Lipschitz semigroup and $\mu:[0,T]\to\PP_c(\R^n)$ a Lipschitz continuous curve,
then we have:
\[
W(S_t \mu(0),\mu(t))\leq e^{Kt} \int_0^t
\liminf_{h\to 0+} \frac{1}{h} W(S_h \mu(s),\mu(s+h))\ ds.
\]
\end{lemma}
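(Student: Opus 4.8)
The plan is to run a Gronwall argument on the single scalar function
\[
\Phi(s)=W(S_s\mu(0),\mu(s)),\qquad s\in[0,t],
\]
which measures the discrepancy between the semigroup trajectory and the given curve. Note that $\Phi(0)=W(S_0\mu(0),\mu(0))=0$, while $\Phi(t)$ is exactly the left-hand side to be bounded. First I would record that $\Phi$ is Lipschitz continuous: both $s\mapsto S_s\mu(0)$ (by (\ref{eq:Lip-time-cont})) and $s\mapsto\mu(s)$ (by hypothesis) are Lipschitz for $W$, and the reverse triangle inequality gives $|\Phi(s)-\Phi(s')|\leq W(S_s\mu(0),S_{s'}\mu(0))+W(\mu(s),\mu(s'))$, so $\Phi$ is Lipschitz, hence absolutely continuous and differentiable almost everywhere. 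Throughout, let $R$ be a radius bounding the supports of all $S_s\mu(0)$ and $\mu(s)$ for $s\in[0,t]$ (which exists by (\ref{eq:supp-bound}) together with compactness of the continuous image $\mu([0,t])$), and set $K=C(R)$, the constant from (\ref{eq:cont-dep}) for that radius.

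The heart of the argument is a one-sided increment estimate. For small $h>0$, using the semigroup identity $S_{s+h}\mu(0)=S_h S_s\mu(0)$, the triangle inequality, and the contraction property (\ref{eq:cont-dep}), I would estimate
\[
\Phi(s+h)\leq W(S_hS_s\mu(0),S_h\mu(s))+W(S_h\mu(s),\mu(s+h))\leq e^{Kh}\,\Phi(s)+W(S_h\mu(s),\mu(s+h)).
\]
Subtracting $\Phi(s)$, dividing by $h$, and taking $\liminf_{h\to 0+}$ yields, for the lower right Dini derivative, the differential inequality
\[
D_+\Phi(s)\leq K\,\Phi(s)+\psi(s),\qquad \psi(s)\dot=\liminf_{h\to 0+}\tfrac1h\,W(S_h\mu(s),\mu(s+h)),
\]
valid for every $s\in[0,t)$.

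Since $\Phi$ is absolutely continuous, $\Phi'(s)=D_+\Phi(s)$ for almost every $s$, so $\Phi'(s)\leq K\Phi(s)+\psi(s)$ a.e. Multiplying by the integrating factor $e^{-Ks}$ gives $\frac{d}{ds}\big(e^{-Ks}\Phi(s)\big)\leq e^{-Ks}\psi(s)$ a.e.; integrating over $[0,t]$ and using $\Phi(0)=0$ produces
\[
\Phi(t)\leq\int_0^t e^{K(t-s)}\,\psi(s)\,ds\leq e^{Kt}\int_0^t\psi(s)\,ds,
\]
which is the claimed bound, the last inequality using $K>0$ and $s\geq 0$.

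The main difficulty I expect is measure-theoretic bookkeeping rather than any analytic subtlety: one must verify that $\psi$ is measurable, so that the integral is meaningful and the passage from the pointwise Dini inequality to its integrated form is legitimate. This holds because $(s,h)\mapsto W(S_h\mu(s),\mu(s+h))$ is jointly continuous (by continuity of $S_\cdot$, of $\mu$, and of $W$), so the $\liminf$ defining $\psi$ may be computed over rational $h\downarrow 0$, exhibiting $\psi$ as a countable $\liminf$ of continuous functions. If the right-hand integral is infinite the statement is trivial, so one may assume $\psi\in L^1$.
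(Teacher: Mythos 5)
Your proof is correct, but it takes a different route from the paper's. The paper follows Bressan's classical error estimate (Theorem 2.9 of \cite{Bressan-book}): it works with the ``backward'' comparison function $\psi(s)=W(S_{t-s}\mu(s),S_t\mu(0))$, for which a single increment estimate
$\psi(s+h)-\psi(s)\leq W(S_{t-(s+h)}\mu(s+h),S_{t-(s+h)}S_h\mu(s))\leq e^{Kt}\,W(\mu(s+h),S_h\mu(s))$
already carries the full constant $e^{Kt}$, so that the auxiliary function $x(s)=\psi(s)-e^{Kt}\int_0^s\liminf_h \frac1h W(S_h\mu(r),\mu(r+h))\,dr$ is nonincreasing and no Gronwall step is needed; the conclusion is read off from $x(t)\leq 0$ since $\psi(t)=W(\mu(t),S_t\mu(0))$. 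You instead use the ``forward'' function $\Phi(s)=W(S_s\mu(0),\mu(s))$, which produces the genuine differential inequality $\Phi'\leq K\Phi+\psi$ and requires the integrating-factor argument; in exchange you obtain the slightly sharper intermediate bound $\Phi(t)\leq\int_0^t e^{K(t-s)}\psi(s)\,ds$ before relaxing to $e^{Kt}\int_0^t\psi$. The two proofs share all the essential ingredients --- the semigroup identity, the $e^{Kh}$ contraction from (\ref{eq:cont-dep}), Lipschitz continuity of the auxiliary scalar function via the reverse triangle inequality, and the observation that $\psi$ is measurable because the liminf can be taken over rational $h$ --- and your measurability discussion matches the paper's almost verbatim. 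One small point common to both arguments (and left implicit in the paper as well): the constant $K=C(R)$ requires a single radius $R$ bounding the supports of $S_s\mu(0)$ and $\mu(s)$ for all $s\in[0,t]$; for the semigroup trajectory this follows from (\ref{eq:supp-bound}), while for the curve $\mu(\cdot)$ it is an implicit standing assumption, since $W$-compactness of $\mu([0,t])$ alone does not force uniformly bounded supports. Also note that your fallback ``if the integral is infinite the statement is trivial'' is never needed: $\psi(s)\leq W(S_h\mu(s),\mu(s))/h+W(\mu(s),\mu(s+h))/h$ is bounded by $C(R)+\mathrm{Lip}(\mu)$, as the paper observes.
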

Lemma \ref{lem:Sem-est} was proved in \cite{Bressan-book} (Theorem 2.9) for semigroups on
Banach spaces, but is valid also for metric spaces. For reader convenience we
detail the proof in the Appendix.

We are now ready to prove the following:
\begin{theorem}\label{th:uniq-sem}
Consider a PVF $V$ satisfying (H1), $T>0$ and a Dirac germ $\gamma$.
There exists at most one Lipschitz semigroup compatible with $\gamma$.
\end{theorem}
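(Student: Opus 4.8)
The plan is to prove uniqueness by contradiction: suppose $S$ and $\tilde{S}$ are two Lipschitz semigroups, both compatible with the same Dirac germ $\gamma$. The key tool is Lemma \ref{lem:Sem-est}, which estimates the distance between a semigroup trajectory and an arbitrary Lipschitz curve in terms of an instantaneous one-sided error. The strategy is to apply this Lemma with the semigroup $S$ and the curve $\mu(t) = \tilde{S}_t \mu_0$, so that controlling $W(S_t\mu_0, \tilde{S}_t\mu_0)$ reduces to estimating $\liminf_{h\to 0+}\frac{1}{h}W(S_h\mu(s),\mu(s+h))$ along the trajectory $\mu(s)=\tilde{S}_s\mu_0$.

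The main step is to establish that this instantaneous error vanishes, first for Dirac data and then by density. First I would fix $\mu_0\in\DD_R$ (a finite sum of Dirac deltas with support in $B(0,R)$) and exploit the common Dirac germ. For the trajectory $\mu(s)=\tilde{S}_s\mu_0$, at each fixed time $s$ the measure $\mu(s)$ is itself in $\PP_c(\R^n)$; the difficulty is that $\mu(s)$ need not be a finite sum of Dirac deltas, so the germ does not directly apply to $\mu(s)$. To handle the small-time increment $W(S_h\mu(s),\mu(s+h))$, I would insert the germ evolution $\gamma_{\mu(s)}(h)$ as an intermediate point whenever $\mu(s)\in\DD$, using the triangle inequality
\begin{equation}
W(S_h\mu(s),\mu(s+h))\leq W(S_h\mu(s),\gamma_{\mu(s)}(h))
+W(\gamma_{\mu(s)}(h),\tilde{S}_h\mu(s)).
\end{equation}
By the compatibility of both semigroups with $\gamma$ (Definition \ref{def:Dir-semigroup}), each term on the right is bounded by $C(R)\,h^2$, so the sum is $O(h^2)$, and dividing by $h$ and letting $h\to 0+$ gives $\liminf_{h\to 0+}\frac{1}{h}W(S_h\mu(s),\mu(s+h))=0$. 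Inserting this into Lemma \ref{lem:Sem-est} yields $W(S_t\mu_0,\tilde{S}_t\mu_0)=0$ for all $\mu_0\in\DD_R$, hence $S_t\mu_0=\tilde{S}_t\mu_0$ on the dense set $\DD$.

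The hard part will be removing the restriction that $\mu(s)=\tilde{S}_s\mu_0$ lie in $\DD$, since the germ only prescribes evolution for finite sums of Dirac deltas while a semigroup trajectory generically leaves $\DD$ immediately. I expect this to be the central obstacle, and the natural way around it is to avoid ever applying the germ to $\mu(s)$ itself: instead one keeps $\mu_0\in\DD_R$ fixed, proves $S_t\mu_0=\tilde{S}_t\mu_0$ for Dirac data directly via the germ at the \emph{initial} time (taking $\mu(s)$ to be a discrete approximation and controlling the approximation error by the Lipschitz continuous-dependence estimate (\ref{eq:cont-dep}) that both semigroups satisfy), and only then extends the equality $S=\tilde{S}$ to all of $\PP_c(\R^n)$ by density of $\DD$ together with the uniform Lipschitz continuity of both $S$ and $\tilde{S}$ in the initial datum. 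Concretely, for arbitrary $\mu_0\in\PP_c(\R^n)$ one approximates $\mu_0$ by $\mu_0^k\in\DD$ with $W(\mu_0^k,\mu_0)\to 0$, writes
\begin{equation}
W(S_t\mu_0,\tilde{S}_t\mu_0)\leq W(S_t\mu_0,S_t\mu_0^k)
+W(\tilde{S}_t\mu_0^k,\tilde{S}_t\mu_0),
\end{equation}
using $S_t\mu_0^k=\tilde{S}_t\mu_0^k$ for the vanishing middle term, and bounds each remaining term by $e^{C(R)t}W(\mu_0^k,\mu_0)$ via (\ref{eq:cont-dep}); letting $k\to\infty$ forces $S_t\mu_0=\tilde{S}_t\mu_0$, completing the proof.
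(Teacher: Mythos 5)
Your proposal is correct and follows essentially the same route as the paper: Lemma \ref{lem:Sem-est} applied to the curve $s\mapsto \tilde S_s\mu_0$, with the instantaneous error killed by inserting the germ at a Dirac approximation $\mu_s$ of $\tilde S_s\mu_0$ and absorbing the approximation error through (\ref{eq:cont-dep}); since the resulting bound $2e^{\tilde Ch}\epsilon+2Ch^2$ holds for every $\epsilon>0$ independently of $h$, the increment is genuinely $O(h^2)$. The only difference is cosmetic: the paper runs this argument directly for arbitrary $\mu_0\in\PP_c(\R^n)$ (the per-time-$s$ Dirac approximation works regardless of the initial datum), so your preliminary restriction to $\mu_0\in\DD$ and the final density step are unnecessary, though harmless.
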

\begin{proof}
Let $S^1$, $S^2$ be two Lipschitz semigroups compatible with $\gamma$.
By Lemma \ref{lem:Sem-est}, we have for every $\mu\in\PP_c(\R^n)$:
\[
W(S^1_t \mu,S^2_t\mu)\leq e^{Kt} \int_0^t
\liminf_{h\to 0+} \frac{1}{h} W(S^1_h S^2_s\mu,S^2_{s+h}\mu)\ ds.
\]
There exists $R>0$ such that $Supp(\mu)\subset B(0,R)$, thus 
from (\ref{eq:supp-bound}) of Definition \ref{def:Lip-semigroup},
$Supp(S^i_t\mu)\subset B(0,e^{CT}(R+1))$ for every $t\in [0,T]$.
Moreover, there exists $C_i=C_i(e^{CT}(R+1))>0$ 
such that (\ref{eq:cont-dep}) and (\ref{eq:Lip-time-cont}) holds true.\\
Now, fix $s$.  For every $\epsilon$ there exists $\mu_s\in\DD$ such that 
$W(\mu_s,S^2_s\mu)\leq\epsilon$ and, from Definition \ref{def:Dir-semigroup},
there exists $C=C(e^{CT}(R+1))$ such that:
\[
W(S^1_h \mu_s,\gamma_{\mu_s}(h))\leq C \ h^2,\qquad
W(S^2_h \mu_s,\gamma_{\mu_s}(h))\leq C \ h^2.
\]
Then:
\[
W(S^1_h S^2_s\mu,S^2_{s+h}\mu)=W(S^1_h S^2_s\mu,S^2_{h}S^2_s\mu)\leq
\]
\[
W(S^1_h S^2_s\mu,S^1_h \mu_s)+W(S^1_h \mu_s,\gamma_{\mu_s}(h))+
W(\gamma_{\mu_s}(h),S^2_h \mu_s)+W(S^2_h \mu_s,S^2_{h}S^2_s\mu)\leq
\]
\[
2\ (e^{\tilde{C}h}\epsilon+C\,h^2)
\]
where $\tilde{C}=\max\{C_1,C_2\}$.
Since $\epsilon$ is arbitrary, we conclude:
\[
\liminf_{h\to 0+} \frac{1}{h} W(S^1_h S^2_s\mu,S^2_{s+h}\mu)= 0
\]
which gives $W(S^1_t \mu,S^2_t\mu)=0$.
\end{proof}

Our main result about uniqueness is the following:
\begin{theorem}\label{th:uniq-sem-LAS}
Let $V$ be a PVF satisfying (H1) and (H3). If for every $\mu\in\DD$
(finite sum of Dirac deltas)
the sequence of LASs $\mu^N$ converge to a unique limit, then
there exists a unique Lipschitz semigroup whose trajectories are limits of LASs.
\end{theorem}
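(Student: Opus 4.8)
The plan is to reduce the statement to the abstract uniqueness result Theorem \ref{th:uniq-sem} by building a Dirac germ out of the prescribed LAS limits. Since (H3) implies (H2) by Lemma \ref{lem:WT}, Theorem \ref{th:MDE} already produces a Lipschitz semigroup $S$ obtained as a limit of LASs, so existence is in hand and the real content is the uniqueness assertion together with the identification of $S$ as the unique semigroup of this type. First I would record that, for $\mu\in\DD$, the LASs $\mu^N$ are uniformly Lipschitz in time and supported in a fixed ball (Lemma \ref{lem:bound-supp}), so by Ascoli--Arzel\`a every subsequence is relatively compact; the hypothesis that they converge to a unique limit then means the full sequence converges, and I set $\gamma_\mu(t)=\lim_{N\to\infty}\mu^N(t)$.

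Next I would verify that $\gamma$ is a Dirac germ in the sense of Definition \ref{def:Dir-germ}. One has $\gamma_\mu(0)=\mu$ because $\mu^N_0=\A^x_N(\mu)\to\mu$; the curve $\gamma_\mu$ is Lipschitz in time with the constant of (\ref{eq:Lip-time-approx}), inherited in the limit; and $\gamma_\mu$ solves (\ref{eq:MDE}) by Theorem \ref{th:MDE-ex}, which applies since (H3)$\Rightarrow$(H2). The lifespan may be taken to be the fixed horizon $T$, which is trivially uniformly positive on bounded supports, so all the requirements of Definition \ref{def:Dir-germ} hold on the whole of $\DD$.

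The key step is to show that the semigroup $S$ of Theorem \ref{th:MDE} is compatible with $\gamma$ in the sense of Definition \ref{def:Dir-semigroup}; in fact I expect the sharper identity $S_t\mu=\gamma_\mu(t)$ for every $\mu\in\DD$, which makes the required bound hold with zero left-hand side. On the countable set $\DD^q$ this is immediate: in the proof of Theorem \ref{th:MDE} the value $S_t\mu$ is a limit of a diagonal subsequence of $\mu^N(t)$, and by the uniqueness hypothesis this coincides with $\gamma_\mu(t)$. To pass from $\DD^q$ to all of $\DD$ I would establish that the germ itself is Lipschitz in its initial datum: passing to the limit in the recursive estimate (\ref{eq:fin-est}) (equivalently (\ref{eq:cont-dep-on-D})) yields $W(\gamma_\mu(t),\gamma_\nu(t))\le e^{Kt}W(\mu,\nu)$ for $\mu,\nu\in\DD$. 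Since $S$ satisfies the same continuous-dependence bound (\ref{eq:cont-dep}) and $\DD^q$ is dense in $\DD$, the identity $S_t\mu=\gamma_\mu(t)$ propagates from $\DD^q$ to $\DD$, whence compatibility is trivial.

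Finally, Theorem \ref{th:uniq-sem} shows that $S$ is the unique Lipschitz semigroup compatible with $\gamma$. Any competing Lipschitz semigroup $S'$ whose trajectories are limits of LASs must, by the identical argument, satisfy $S'_t\mu=\gamma_\mu(t)$ on $\DD$ and hence be compatible with $\gamma$, so uniqueness forces $S'=S$. I expect the only delicate point to be the extension carried out in the previous paragraph, namely verifying the Lipschitz dependence of the germ $\gamma$ on the initial measure so that the exact equality on the rational data $\DD^q$ propagates to all finite Dirac sums in $\DD$; once that is secured, everything else is a direct application of results already established.
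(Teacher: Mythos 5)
Your proof follows the same route as the paper: build a Dirac germ $\gamma$ from the unique LAS limits on $\DD$, show that the semigroup $S$ of Theorem \ref{th:MDE} is compatible with $\gamma$, and invoke Theorem \ref{th:uniq-sem}, noting that any competing semigroup whose trajectories are LAS limits is also compatible with $\gamma$. The paper compresses the compatibility step into the phrase ``by definition, $S$ is compatible with $\gamma$,'' whereas you correctly flesh out the passage from the countable set $\DD^q$ (on which $S_t\mu$ is directly a LAS limit) to all of $\DD$ via the Lipschitz dependence of the germ inherited from (\ref{eq:fin-est}) and density; this is a legitimate filling-in of a detail the paper leaves implicit, not a different argument.
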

\begin{proof}
By Theorem \ref{th:MDE} there exists a Lipschitz semigroup $S$
obtained via limits of LASs. Moreover, we can define a Dirac germ $\gamma$
from the unique limits of LASs for $\mu\in\DD$ and,
by definition, $S$ is compatible with $\gamma$.\\
If $S'$ is another Lipschitz semigroup whose trajectories are limits of LASs,
then, by uniqueness of limits, $S'$ is compatible with $\gamma$, thus by
Theorem \ref{th:uniq-sem} coincides with $S$.
\end{proof}
Theorem \ref{th:uniq-sem-LAS} allows to reduce the question of uniqueness
of a Lipschitz semigroup (compatible with LAS limits) to that of understanding
uniqueness of LAS limits for finite sums of Dirac deltas.
The latter question is much simpler than the general uniqueness
of Lipschitz semigroups and, in next sections, we provide examples of PVFs for which we can apply Theorem \ref{th:uniq-sem-LAS}.

\section{Ordinary differential equations and MDEs}\label{sec:ODEs}
In this section we show natural connections between Ordinary Differential Equations
(briefly ODEs) and MDEs. 
We start with the following definition:
\begin{definition}
Consider an ODE $\dot x = v(x)$, $v:\R^n\to\R^n$. 
We define a PVF $V^v$ by:
\[
V^v[\mu]=\mu\otimes \delta_{v(x)}.
\]
\end{definition}
The main question is if $V^v$ satisfies hypothesis (H1) and (H2) or (H3).\\ 
Notice that (H1) easily follows
from a sublinear growth requirement on $v$, i.e. there exists $C>0$ such
that $|v(x)|\leq C(1+|x|)$. However, the continuity of the map
$\mu\to V^v[\mu]$ is not implied by the continuity of $v$.
This can be easily seen as follows. Consider two measures
$\mu^i=\sum_{j=1}^{N_i}\, m^i_j\,\delta_{x^i_j}$, $i=1,2$. 
By possibly splitting the masses $m^i_j$, we can assume $N_1=N_2=N$ and that there exists
a map $\sigma:\{1,\ldots,N\}\to \{1,\ldots,N\}$
so that an optimal transference plan between $\mu^1$ and $\mu^2$ is equivalent to move
the mass $m^1_j=m^2_{\sigma(j)}$ from $x^1_j$ to $x^2_{\sigma(j)}$.
Then we have:
\[
W^{T\R^n}(\mu^1,\mu^2)\sim \sum_{j=1}^{N} m^1_j\ (|x^1_j-x^2_{\sigma(j)}|^2+
|v(x^1_j)-v(x^2_{\sigma(j)})|^2)^{\frac12},
\]
therefore continuity follows only if we can estimate:
$\sum_{j=1}^{N} m^1_j |v(x^1_j)-v(x^2_{\sigma(j)})|$
in terms of $\sum_{j=1}^{N} m^1_j |x^1_j-x^2_{\sigma(j)}|$
which requires higher regularity.\\
We first prove the following:
\begin{prop}\label{prop:ODE-H3}
$V^v$ satisfies (H3) for finite sums of Dirac deltas if and only if $v$ 
is locally Lipschitz continuous. 
\end{prop}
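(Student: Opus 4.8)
The plan is to first reduce $\WW(V^v[\mu],V^v[\nu])$ to an explicit deterministic quantity by exploiting the graph structure of $V^v$, and then read off both implications from this formula. I write $\Phi:\R^n\to T\R^n$ for the graph map $\Phi(x)=(x,v(x))$, so that $V^v[\mu]=\mu\otimes\delta_{v(x)}=\Phi\#\mu$ and likewise $V^v[\nu]=\Phi\#\nu$. Since $V^v[\mu]$ is concentrated on the graph of $v$, every transference plan $T\in P(V^v[\mu],V^v[\nu])$ is concentrated on $\mathrm{graph}(v)\times\mathrm{graph}(v)$, so the fiber coordinates are slaved to the base coordinates and $T=(\Phi\times\Phi)\#\pi$ with $\pi=\pi_{13}\#T\in P(\mu,\nu)$; conversely every $\pi\in P(\mu,\nu)$ yields an admissible $T$ this way. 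Under this bijection the admissibility constraint $\pi_{13}\#T\in P^{opt}(\mu,\nu)$ becomes $\pi\in P^{opt}(\mu,\nu)$, and on $\mathrm{Supp}(T)$ one has fiber values $v(x)$ and $v(y)$, so that $\int|v-w|\,dT=\int|v(x)-v(y)|\,d\pi$. This yields the key identity
\[
\WW(V^v[\mu],V^v[\nu])=\inf_{\pi\in P^{opt}(\mu,\nu)}\int_{(\R^n)^2}|v(x)-v(y)|\,d\pi(x,y).
\]

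For the sufficiency direction ($v$ locally Lipschitz $\Rightarrow$ (H3)), I would fix $R>0$ and let $L=L(R)$ be a Lipschitz constant for $v$ on $B(0,R)$. Given measures with supports in $B(0,R)$ (the argument needs no restriction to Dirac sums here), I pick any optimal base plan $\pi^\ast\in P^{opt}(\mu,\nu)$, whose support lies in $B(0,R)\times B(0,R)$ where $|v(x)-v(y)|\le L\,|x-y|$. Using $\pi^\ast$ as a competitor in the identity above gives
\[
\WW(V^v[\mu],V^v[\nu])\le\int|v(x)-v(y)|\,d\pi^\ast\le L\int|x-y|\,d\pi^\ast=L\,W(\mu,\nu),
\]
which is exactly (H3) with $K(R)=L(R)$.

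For the necessity direction ((H3) $\Rightarrow$ $v$ locally Lipschitz), I would test (H3) on single atoms. Fix $R>0$ and $x,y\in B(0,R)$, and set $\mu=\delta_x$, $\nu=\delta_y$ (both finite sums of Dirac deltas). Then $V^v[\delta_x]=\delta_{(x,v(x))}$ and $V^v[\delta_y]=\delta_{(y,v(y))}$ admit a single transference plan, whose base marginal $\delta_{(x,y)}$ is automatically the unique, hence optimal, plan between $\delta_x$ and $\delta_y$; the identity then reads $\WW(V^v[\delta_x],V^v[\delta_y])=|v(x)-v(y)|$ while $W(\delta_x,\delta_y)=|x-y|$. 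Thus (H3) gives $|v(x)-v(y)|\le K(R)\,|x-y|$ for all $x,y\in B(0,R)$, so $v$ is Lipschitz on every ball, i.e.\ locally Lipschitz.

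The only genuinely delicate point is the reduction in the first paragraph: I must justify that an admissible plan $T$ between the two graph measures is completely determined by its base marginal, leaving no freedom in the fibers. This is precisely where the special form $V^v[\mu]=\mu\otimes\delta_{v(x)}$ is essential — the fiber disintegration of $V^v[\mu]$ is a Dirac mass — and it is what collapses the fiber cost $\int|v-w|\,dT$ into the deterministic expression $\int|v(x)-v(y)|\,d\pi$. Everything after that is a one-line comparison between $|v(x)-v(y)|$ and $|x-y|$ along transport plans.
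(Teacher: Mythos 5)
Your proof is correct and follows essentially the same route as the paper: sufficiency by pushing an optimal base plan onto the graphs (giving the competitor plan with cost $\int |v(x)-v(y)|\,d\pi \le L\,W(\mu,\nu)$), and necessity by testing (H3) on pairs of Dirac masses, exactly as in the paper's converse step. The one genuine addition is your explicit identity $\WW(V^v[\mu],V^v[\nu])=\inf_{\pi\in P^{opt}(\mu,\nu)}\int|v(x)-v(y)|\,d\pi$, which rests on the (valid) observation that a coupling of two graph measures is determined by its base marginal; this makes the sufficiency direction work for arbitrary measures in $\PP_c(\R^n)$ rather than only Dirac sums, a fact the paper obtains only later, in the proof of Theorem \ref{th:ODE}, via lattice approximation, gluing of plans and narrow compactness.
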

\begin{proof}
Assume first $v$ to be locally Lipschitz, fix $R>0$ and let $L(v,R)$
be the Lipschitz constant of $v$ on $B(0,R)$.
Consider two probability measures
$\mu^i=\sum_{j=1}^{N_i}\, m^i_j\,\delta_{x^i_j}$, $i=1,2$. As above,
we can assume $N_1=N_2=N$ and that there exists
a map $\sigma:\{1,\ldots,N\}\to \{1,\ldots,N\}$
so that an optimal transference plan between $\mu^1$ and $\mu^2$ moves
the mass $m^1_j=m^2_{\sigma(j)}$ from $x^1_j$ to $x^2_{\sigma(j)}$.
Then it holds:
\[
\WW (V[\mu^1],V[\mu^2])=
\WW \left(V^v\left[\sum_{j=1}^N\, m^1_j\,\delta_{x^1_j}\right],V^v\left[\sum_{j=1}^N\, m^2_j\,\delta_{x^2_j}\right]\right)
\leq 
\]
\[
\leq \sum_{j=1}^N m^1_j \ |v(x^1_j)-v(x^2_{\sigma(j)})|
\leq L(v,R)\,\sum_{j=1}^N m^1_j |x^1_j-x^2_{\sigma(j)}|
=L(v,R)\ W(\mu^1,\mu^2).
\]
Conversely, assume $V^v$ to satisfy (H3). Take two points $x,y\in B(0,R)$,
then we have:
\[
|v(x)-v(y)|= \WW(\delta_{(x,v(x))},\delta_{(y,v(y))})
\leq K\, W(\delta_x,\delta_y)=K\,|x-y|
\]
thus we conclude that $v$ is locally Lipchitz continuous.
\end{proof}
From uniqueness of solutions for locally Lipschitz vector fields we obtain the following:
\begin{theorem}\label{th:ODE}
If $v$ is locally Lipschitz with sublinear growth
(i.e. there exists $C>0$ such that $|v(x)|\leq C(1+|x|)$), then $V^v$ satisfies (H3),
thus the MDE, associated to $V^v$, admits a unique Lipschitz semigroup
obtained as limit of LASs.
\end{theorem}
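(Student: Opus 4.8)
The plan is to verify that $V^v$ satisfies (H1) and (H3) on all of $\PP_c(\R^n)$, then to invoke Theorem \ref{th:MDE} for existence of a Lipschitz semigroup obtained as a limit of LASs, and finally to establish uniqueness of LAS limits for finite sums of Dirac deltas so as to apply Theorem \ref{th:uniq-sem-LAS}. Condition (H1) is immediate: since $V^v[\mu]=\mu\otimes\delta_{v(x)}$ is supported on $\{(x,v(x)):x\in Supp(\mu)\}$, the sublinear bound $|v(x)|\le C(1+|x|)$ furnishes exactly the required support-sublinearity, and sublinear growth also guarantees that the flow of $\dot x=v(x)$ is globally defined without finite-time blow-up.

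For (H3) I would extend the computation of Proposition \ref{prop:ODE-H3} from finite sums of Dirac deltas to arbitrary $\mu,\nu\in\PP_c(\R^n)$ with supports in $B(0,R)$. The essential structural feature is that $V^v$ assigns a deterministic velocity. Given any optimal base plan $\gamma\in P^{opt}(\mu,\nu)$, its lift $T$ supported on $\{(x,v(x),y,v(y))\}$ (the pushforward of $\gamma$ under $(x,y)\mapsto(x,v(x),y,v(y))$) lies in $P(V^v[\mu],V^v[\nu])$ and satisfies $\pi_{13}\#T=\gamma\in P^{opt}(\mu,\nu)$, hence is an admissible competitor in the infimum defining $\WW$ in (\ref{eq:WW}). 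Writing $L=L(v,R)$ for the Lipschitz constant of $v$ on $B(0,R)$ and using optimality of $\gamma$,
\[
\WW(V^v[\mu],V^v[\nu])\le\int|v(x)-v(y)|\,d\gamma(x,y)\le L\int|x-y|\,d\gamma(x,y)=L\,W(\mu,\nu),
\]
which is (H3) with $K(R)=L(v,R)$.

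With (H1) and (H3) established, Theorem \ref{th:MDE} yields a Lipschitz semigroup whose trajectories are limits of LASs. To promote this to uniqueness through Theorem \ref{th:uniq-sem-LAS}, I must show that for each $\mu_0\in\DD$ the sequence $\mu^N$ has a unique limit. Here I would use that, for the deterministic field $V^v$, the identity $\int_{T\R^n}\nabla f(x)\cdot v\,dV^v[\mu](x,v)=\int_{\R^n}\nabla f(x)\cdot v(x)\,d\mu(x)$ collapses the weak formulation (\ref{eq:MDE-int}) into the classical continuity equation $\partial_t\mu+\mathrm{div}(v\mu)=0$. For $v$ locally Lipschitz with sublinear growth this equation admits the unique measure-valued solution $\mu(t)=X(t,\cdot)\#\mu_0$, where $X$ is the globally defined flow, by the method of characteristics (see e.g.\ \cite{AGS}). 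Since any subsequential LAS limit is a weak solution by Theorem \ref{th:MDE-ex}, it must coincide with this flow-transported measure; in particular the limit is unique, and Theorem \ref{th:uniq-sem-LAS} delivers the asserted unique Lipschitz semigroup.

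I expect the crux to be this last step. Existence of a limiting solution follows from the compactness argument already used in Theorem \ref{th:MDE-ex}, but its uniqueness rests on the rigidity of the continuity equation with locally Lipschitz drift, a classical yet nontrivial input. It is precisely the deterministic structure of $V^v$ that enables this reduction and distinguishes the present case from a generic MDE, for which Definition \ref{def:sol-MDE} does not by itself force uniqueness (cf.\ Example \ref{ex:non-uni}).
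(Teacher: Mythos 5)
Your proposal is correct, but it reaches (H3) and the uniqueness of LAS limits by routes genuinely different from the paper's. For (H3) on all of $\PP_c(\R^n)$, you exploit the deterministic fiber structure directly: given an optimal base plan $\gamma\in P^{opt}(\mu,\nu)$, the pushforward of $\gamma$ under $(x,y)\mapsto(x,v(x),y,v(y))$ is automatically a coupling of $V^v[\mu]$ and $V^v[\nu]$ whose base marginal is $\gamma$, so it is admissible in (\ref{eq:WW}) and yields $\WW(V^v[\mu],V^v[\nu])\le L(v,R)\,W(\mu,\nu)$ in one line. The paper instead first proves (H3) only for finite sums of Dirac deltas (Proposition \ref{prop:ODE-H3}), then extends to general measures by lattice discretization, gluing three transference plans, and passing to the limit via tightness and lower semicontinuity of transport costs. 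Your argument is shorter and avoids the gluing machinery entirely; the paper's heavier approximation scheme is essentially the same one it reuses for Theorem \ref{th:MDE-kinetic}, where no such direct lift is available. For uniqueness of LAS limits on $\DD$, the paper observes that the LAS for $\mu_0=\sum_i m_i\delta_{x_i}$ is explicitly $\sum_i m_i\delta_{x_i^N(t)}$ with $x_i^N$ an Euler approximation converging to the unique ODE trajectory, which is elementary and self-contained; you instead collapse the weak formulation to the continuity equation $\partial_t\mu+\mathrm{div}(v\mu)=0$ and invoke uniqueness of its measure-valued solutions (the same input the paper uses later for Proposition \ref{prop:lin-transp}). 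Your route imports a classical but nontrivial PDE uniqueness result, and in exchange shows that \emph{all} weak solutions from Dirac initial data coincide with the flow-transported measure, not merely the LAS limits; just note that applying it requires re-running the compactness argument of Theorem \ref{th:MDE-ex} along an arbitrary subsequence to conclude that every subsequential LAS limit is a weak solution, which the proof of that theorem does permit.
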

\begin{proof}
Property (H1) for $V^v$ follows from the sublinear growth of $v$,
while (H3) follows from Proposition \ref{prop:ODE-H3} for finite sums of Dirac deltas.
Let us prove (H3) for $V^v$ on the whole set $\PP_c(\R^n)$.
Consider $\mu_i\in\PP_c(\R^n)$, $i=1,2$,
let $x_j=(x^j_1,\ldots,x^j_n)$ be the points of $\Z^n/N$ and define:
\[
\mu_i^N=\sum_j m^i_j \delta_{b_j},
\]
where $m^i_j=\mu_i(x_j+Q_N)$, $Q_N=\{y:0\leq y_i< \frac{1}{N},i=1,\ldots,n\}
\subset\R^n$, and $b_j=(x^j_1+\frac{1}{2N},\ldots,x^j_n+\frac{1}{2N})$
is the baricenter of $x_j+Q_N$. 
An optimal transference plan between $\mu_i$ and $\mu^N_i$
is given by the map $\tau_N$ such that  $\tau_N(x)=b_j$ for all $x\in x_j+Q_N$,
thus $\lim_{N\to\infty} W(\mu^i_N,\mu_i)\leq \lim_{N\to\infty}\frac{1}{N}= 0$.
Moreover, a transference plan $T^1_N\in P(V[\mu_1],V[\mu^N_1])$ 
is given by the map $\phi_N:T\R^n\to T\R^n$ such that
$\phi_N(x,v)=(\tau_N(x), v(\tau_N(x)))$, so $\phi_N(x,v)=(b_j,v(b_j))$ 
for all $x\in x_j+Q_N$ and $v\in\R^n$.
We construct similarly a tranference plan $T^2_N\in P(V[\mu^2_N],V[\mu_2])$.
By definition, $\pi_{13}\# T^1_N\in P^{opt}(\mu_1,\mu^N_1)$
and $\pi_{13}\# T^2_N\in P^{opt}(\mu^2_N,\mu_2)$. Moreover:
\begin{equation}\label{eq:est-T1_N}
\int_{(T\R^n)^2} |v-w| \ dT^1_N(x,v,y,w) \leq
\sup_j \sup_{x\in x_j+Q_N}|v(x)-v(b_j)| =\frac{L(v,R)}{N},
\end{equation}
where $Supp(\mu_i)\subset B(0,R)$ and $L(v,R)$ is the Lipschitz constant
of $v$ on $B(0,R)$. The same estimate holds true for  for $T^2_N$.
By Proposition \ref{prop:ODE-H3}, (H3) holds true for $\mu^N_i$,
then for every $\epsilon>0$ and $N$
there exists $T_N\in P(V[\mu^1_N],V[\mu^2_N])$, with
$\pi_{13}\#T_N\in  P^{opt}(\mu^1_N,\mu^2_N)$, such that:
\begin{equation}\label{eq:est-T_N}
\int_{T\R^n\times T\R^n} |v-w| \ dT_N(x,v,y,w)\leq \WW(V[\mu^1_N],V[\mu^2_N])+\epsilon\leq L(v,R)\ W(\mu^1_N,\mu^2_N)+\epsilon.
\end{equation}
We can compose the transference plans $T^1_N$, $T_N$ and $T^2_N$, see
Lemma 5.3.2, Remark 5.3.3 and Section 7.1 of \cite{AGS}.
More precisely, there exists
$\tilde{T}_N $    such that 
$\tilde{\pi}_{12}\#\tilde{T}_N = T^1_N$,
$\tilde{\pi}_{23}\#\tilde{T}_N = T_N$,
$\tilde{\pi}_{34}\#\tilde{T}_N = T^2_N$,
and
$\tilde{\pi}_{14}\#\tilde{T}_N\in P(V[\mu_1],V[\mu_2])$, where
$\tilde{\pi}_{ij}$ is the projection on the $i$-th and $j$-th components of 
the Cartesian product $(T\R^n)^4$. Moreover, it holds:
\begin{eqnarray}\label{eq:est-vw-T}
&\int_{(T\R^n)^2} |v-w| \ d(\tilde{\pi}_{14}\#\tilde{T}_N)(x,v,y,w)\leq
&\int_{(T\R^n)^2} |v-w| \ d(T^1_N+T_N+T^2_N)(x,v,y,w)\leq\nonumber\\
&L(v,R)\ W(\mu^1_N,\mu^2_N)+\epsilon +\frac{2\,L(v,R)}{N},
\end{eqnarray}
where we used (\ref{eq:est-T1_N}) and (\ref{eq:est-T_N}), and also:
\begin{eqnarray}\label{eq:est-xy-T}
&\int_{(T\R^n)^2} |x-y| \ d(\tilde{\pi}_{14}\#\tilde{T}_N)(x,v,y,w)\leq
&\int_{(T\R^n)^2} |x-y| \ d(T^1_N+T_N+T^2_N)(x,v,y,w)\leq\nonumber\\
&W(\mu_1,\mu^N_1)+ W(\mu^N_1,\mu^N_2) +
W(\mu^N_2,\mu_2).
\end{eqnarray}
The sequence $\tilde{\pi}_{14}\#\tilde{T}_N$ is tight, thus narrowly relatively compact,
see Lemma 5.2.2 and Theorem 5.1.3 of \cite{AGS}. Therefore, there exists a subsequence 
narrowly converging to $\tilde{T}\in P(V[\mu_1],V[\mu_2])$.
The transport costs are lower semicontinuous for narrow convergence,
see Proposition 7.13 of \cite{AGS}, so:
\[
\int_{(T\R^n)^2} |v-w| \ d\tilde{T}(x,v,y,w)\leq \liminf_{N\to\infty}
\int_{(T\R^n)^2} |v-w| \ d(\tilde{\pi}_{14}\#\tilde{T}_N)(x,v,y,w),
\]
thus by (\ref{eq:est-vw-T}), we get:
\begin{equation}\label{eq:est-tildeT}
\int_{(T\R^n)^2} |v-w| \ d\tilde{T}(x,v,y,w)\leq 
L(v,R)\ W(\mu_1,\mu_2)+\epsilon.
\end{equation}
Moreover: 
\[
\int_{(T\R^n)^2} |x-y| \ d\tilde{T}(x,v,y,w)\leq
\liminf_{N\to\infty} \int_{(T\R^n)^2} |x-y| \ d(\tilde{\pi}_{14}\#\tilde{T}_N)(x,v,y,w) 
\leq
\]
using (\ref{eq:est-xy-T}):
\[
\liminf_{N\to\infty}\ (W(\mu_1,\mu^N_1)+ W(\mu^N_1,\mu^N_2) +
W(\mu^N_2,\mu_2)) = W(\mu_1,\mu_2),
\]
thus $\pi_{13}\#\tilde{T}\in  P^{opt}(\mu_1,\mu_2)$. Therefore:
$\WW(V[\mu_1],V[\mu_2])\leq \int_{(T\R^n)^2} |v-w| \ d\tilde{T}(x,v,y,w)$
and, by (\ref{eq:est-tildeT}) and the arbitrariety of $\epsilon$,
we conclude that (H3) holds true on $\PP_c(\R^n)$.\\
Now, consider a finite sum of Dirac deltas $\mu=\sum_i m_i\delta_{x_i}$
and indicate by $x_i(\cdot)$ the unique solution
to the Cauchy problem $\dot x=v(x)$, $x(0)=x_i$.
The sequence of LAS is given by $\mu^N(t)=\sum_i m_i\delta_{x^N_i(t)}$,
where $x^N_i(t)$ is an (Euler) approximate solution of $x_i(\cdot)$. 
Given $T>0$, $x^N_i(\cdot)$ converges uniformly on $[0,T]$ to $x_i(\cdot)$,
thus $\mu^N$ converges in Wasserstein to $\mu(t) =\sum_i m_i\delta_{x_i(t)}$
uniformly on $[0,T]$.
We conclude applying Theorem \ref{th:uniq-sem-LAS}.
\end{proof}
We also have the following:
\begin{prop}\label{prop:lin-transp}
Assume $v$ is locally Lipschitz continuous with sublinear growth, then
the solution to the Cauchy problem for $\dot \mu=V^v[\mu]$
with initial datum $\mu_0$
is the unique solution to the transport equation:
\[
\mu_t+div(v\ \mu)=0,\quad \mu(0)=\mu_0.
\]
\end{prop}
The proof follows immediately from uniqueness of weak solutions 
to the transport equation, see \cite{Villani}.

\subsection{A natural monoid structure for PVFs}
We now describe a natural monoid structure and scalar product on the set
of PVFs, build upon the connections between vector fields and PVFs.\\ 
First we define a fiber convolution for measures on $T\R^n$ with same
marginal on the base. More precisely, given
$\mu_b\otimes \mu_x$, $\mu_b\otimes \nu_x\in \PP(T\R^n)$,
for every $B\subset T\R^n$ we define
\[
(\mu_b\otimes \mu_x)\ast_f (\mu_b\otimes \nu_x)(B)=
\int_{R^n} \left(\int_{(R^n)^2} \chi_B(x,v+w)\ d\mu_x(v)d\nu_x(w)\right)\ d\mu_b(x).
\]
Given two PVFs $V_1$, $V_2$, we denote $V_i[\mu]=\mu\otimes \nu^i_x$ ($\nu^i_x=\nu^i_x[\mu]$) the disintegration of $V_i$
on base-fiber of $T\R^n$, then we can define: 
\[
(V^1\oplus_f V^2)(\mu)=
(\mu\otimes \nu^1_x)\ast_f (\mu\otimes \nu^2_x).
\]
We have the following:
\begin{prop}\label{prop:Vec-PVF}
The operation $\oplus_f$ defines an abelian monoid structure over the set of PVFs.
\end{prop}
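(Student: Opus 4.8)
The plan is to verify the four monoid axioms in turn---closure (that $\oplus_f$ sends a pair of PVFs to a PVF), associativity, existence of a neutral element, and commutativity (which gives the abelian property)---and the unifying observation throughout is that, for any fixed $\mu$, all PVFs involved produce measures on $T\R^n$ sharing the \emph{same} base marginal $\mu$. This lets the entire computation reduce, fiber by fiber, to the ordinary convolution of probability measures on $\R^n$.

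First I would establish closure. Given PVFs $V^1,V^2$ and $\mu\in\PP(\R^n)$, the PVF condition gives $\pi_1\# V^i[\mu]=\mu$, so both disintegrations share the base marginal $\mu$ and the fiber convolution $\ast_f$ is defined. To see that $(V^1\oplus_f V^2)(\mu)$ is again a PVF I evaluate its base marginal by taking $B=A\times\R^n$ in the defining formula: then $\chi_B(x,v+w)=\chi_A(x)$ is independent of the fiber variables, the inner double integral over $(\R^n)^2$ equals $\chi_A(x)$ since $\nu^1_x,\nu^2_x$ are probability measures, and integrating against $\mu$ yields $\mu(A)$. Hence $\pi_1\#(V^1\oplus_f V^2)(\mu)=\mu$.

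Next I would record the key reduction lemma: the base disintegration of $(V^1\oplus_f V^2)(\mu)$ is $x\mapsto \nu^1_x\ast\nu^2_x$, the ordinary convolution on the fiber $\R^n$. This is immediate from the defining formula, since for a fiber test function $\phi$ the inner integral $\int_{(\R^n)^2}\phi(v+w)\,d\nu^1_x(v)\,d\nu^2_x(w)$ is exactly the pairing of $\phi$ with $\nu^1_x\ast\nu^2_x$. Commutativity then follows from commutativity of addition: $v+w=w+v$ gives $\nu^1_x\ast\nu^2_x=\nu^2_x\ast\nu^1_x$ for every $x$, whence $V^1\oplus_f V^2=V^2\oplus_f V^1$. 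For associativity I iterate the lemma: the fiber disintegration of $(V^1\oplus_f V^2)\oplus_f V^3$ at $\mu$ is $(\nu^1_x\ast\nu^2_x)\ast\nu^3_x$, while that of $V^1\oplus_f(V^2\oplus_f V^3)$ is $\nu^1_x\ast(\nu^2_x\ast\nu^3_x)$; these agree for every $x$ by associativity of convolution on $\R^n$, and since both sides share the base marginal $\mu$ the two measures on $T\R^n$ coincide. Finally, the neutral element is the PVF $V^0$ with $V^0[\mu]=\mu\otimes\delta_0$ (the image of the zero vector field under $v\mapsto V^v$): convolving any $\nu^i_x$ with $\delta_0$ returns $\nu^i_x$, so $V^0\oplus_f V=V\oplus_f V^0=V$ for every PVF $V$.

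The main obstacle is the measure-theoretic bookkeeping behind the reduction lemma: justifying that a genuine Borel disintegration $x\mapsto\nu^i_x$ exists, that $x\mapsto\nu^1_x\ast\nu^2_x$ is Borel measurable as a map into $\PP(\R^n)$, and that the iterated $\otimes$-construction associates so that disintegrating the convolution indeed recovers the fiberwise convolution. All of this is standard (disintegration of measures on a product of Polish spaces, together with continuity of convolution in the narrow topology), so once the reduction lemma is in place the three algebraic axioms collapse to the corresponding pointwise-in-$x$ properties of convolution on $\R^n$.
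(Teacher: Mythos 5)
Your proof is correct and follows essentially the same route as the paper's: the paper likewise reduces commutativity and associativity to the corresponding properties of fiberwise convolution of measures and identifies the neutral element as $V[\mu]=\mu\otimes\delta_0$. Your additional verification of closure (that $(V^1\oplus_f V^2)(\mu)$ has base marginal $\mu$) is a worthwhile detail the paper leaves implicit.
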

\begin{proof}
Commutativity and associativity follows from the same property of convolution
of measures (and linearity of the integration over the base).
The neutral element is given by $V[\mu]=\mu\otimes \delta_0$.
\end{proof}
Notice that every PVF $V^v$ is invertible and its inverse is $V^{-v}$,
but other elements are not invertible, thus $\oplus_f$ does not define
a group structure. However, the sum of two vector fields $v_i$ 
is mapped to the fiber-convolution of their PVFs, indeed:
\[
V^{v_1+v_2} (\mu)=\mu\otimes \delta_{v_1(x)+v_2(x)}
= (\mu\otimes \delta_{v_1(x)}) \ast_f (\mu\otimes \delta_{v_2(x)}).
\]
For every $\lambda\in\R$ and $B\subset T\R^n$, set:
\[
(\lambda\cdot_f V_i)[\mu](B)=
\int_{R^n} \left(\int_{R^n} \chi_B(x,\lambda v)\ d\nu^i_x[\mu](v)\right)\ d\mu(x).
\]
Let us denote by $Vec(\R^n)$ the set of locally Lipschitz vector fields
with sublinear growth endowed with the usual vector space structure,
and by $PVec(\R^n)$ the set of PVFs satisfying (H1) and (H3) endowed
with the operations $\oplus_f$ and $\cdot_f$, then we have:
\begin{prop}
The map $v\to V^v$ is a monoid isomorphism from $Vec(\R^n)$ to $PVec(\R^n)$.
Moreover $V^{\lambda v}=\lambda \cdot_f V^v$.
\end{prop}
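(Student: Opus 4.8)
The plan is to verify the three defining features of a monoid isomorphism — that $v\mapsto V^v$ is well defined into $PVec(\R^n)$ and is a homomorphism preserving the identity, that it is injective, and that it is surjective — and then to check the scalar identity by a direct push-forward computation. Two of these are already in hand: the computation preceding the statement shows $V^{v_1+v_2}=V^{v_1}\oplus_f V^{v_2}$, and by Proposition \ref{prop:Vec-PVF} the neutral element $0\in Vec(\R^n)$ is sent to $V^0[\mu]=\mu\otimes\delta_0$, which is the identity for $\oplus_f$. Well-definedness is Theorem \ref{th:ODE}: a locally Lipschitz $v$ with sublinear growth produces a $V^v$ satisfying (H1) and (H3), so $v\mapsto V^v$ indeed carries $Vec(\R^n)$ into $PVec(\R^n)$.

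Injectivity is immediate by testing on Dirac masses: if $V^{v_1}=V^{v_2}$, evaluating at $\mu=\delta_x$ gives $\delta_x\otimes\delta_{v_1(x)}=\delta_x\otimes\delta_{v_2(x)}$, hence $v_1(x)=v_2(x)$ for every $x\in\R^n$. The scalar identity $V^{\lambda v}=\lambda\cdot_f V^v$ is equally direct: by definition $V^{\lambda v}[\mu]=\mu\otimes\delta_{\lambda v(x)}$, while $\lambda\cdot_f V^v$ is the push-forward of $\mu\otimes\delta_{v(x)}$ under the fiber dilation $(x,w)\mapsto(x,\lambda w)$; since this map sends $\delta_{v(x)}$ to $\delta_{\lambda v(x)}$ fiberwise, the two measures agree for every $\mu$, which is the asserted identity.

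The main obstacle is surjectivity. Given $W\in PVec(\R^n)$, I would first recover a candidate vector field from the action on Dirac masses: writing $W[\delta_x]=\delta_x\otimes\nu_x$ with fiber value $\nu_x\in\PP_c(\R^n)$, I would read off $v(x)$ from this fiber datum. Assumption (H1) yields $\sup_{w\in Supp(\nu_x)}|w|\leq C(1+|x|)$, giving sublinear growth of $v$, while (H3) applied to the pair $\delta_x,\delta_y$ gives, via the computation in the proof of Proposition \ref{prop:ODE-H3},
\[
\WW(W[\delta_x],W[\delta_y])\leq K\,W(\delta_x,\delta_y)=K\,|x-y|,
\]
from which I would extract local Lipschitz continuity, so that $v\in Vec(\R^n)$. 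It then remains to upgrade the identification $W=V^v$ from Dirac masses to all of $\PP_c(\R^n)$, exploiting the density of finite sums of Dirac deltas $\DD$ in $\PP_c(\R^n)$ together with the $\WW$-Lipschitz continuity of both $W$ and $V^v$ supplied by (H3): having matched the two PVFs on $\DD$, I would pass to the limit along an approximating sequence, using Lemma \ref{lem:WT} to bound $W^{T\R^n}(W[\mu],V^v[\mu])$ by $\WW$ plus a vanishing base term, and conclude $W[\mu]=V^v[\mu]$ for every $\mu$.

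This last step is where essentially all the content lies, and I expect it to be the hard part. Matching $W$ and $V^v$ on a single Dirac is trivial, but propagating equality to arbitrary measures requires that the fiber datum of $W$ be consistently captured by the single deterministic field $v$; it is precisely the Lipschitz-type assumption (H3), and not the mere continuity (H2), that I expect to force this consistency and make the passage to the limit — hence the full surjectivity — go through.
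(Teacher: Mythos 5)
Your first two paragraphs are correct and in fact constitute everything the paper itself assigns to this proposition: the paper states it without proof, as a summary of the preceding material (well-definedness is Theorem \ref{th:ODE}, the homomorphism identity is the displayed computation $V^{v_1+v_2}[\mu]=(\mu\otimes\delta_{v_1(x)})\ast_f(\mu\otimes\delta_{v_2(x)})$, the neutral element is handled by Proposition \ref{prop:Vec-PVF}, and injectivity and the scalar identity are the one-line checks you give).

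The problem is your third and fourth paragraphs: surjectivity onto $PVec(\R^n)$ is not a hard open step, it is false, so no amount of work there can close the argument. A PVF in $PVec(\R^n)$ need not have Dirac fibers, so there is nothing to ``read off'' from $W[\delta_x]=\delta_x\otimes\nu_x$. Concretely, take $W[\mu]=\mu\otimes\bar V$ with $\bar V=\frac12\delta_{-1}+\frac12\delta_{1}$ (any fixed non-Dirac $\bar V\in\PP_c(\R^n)$ works). Then (H1) is immediate, and (H3) holds with $\WW(W[\mu],W[\nu])=0$: for any optimal base plan $\tau\in P^{opt}(\mu,\nu)$, the plan $T$ defined by
\begin{equation*}
\int \phi\,dT=\int_{\R^n\times\R^n}\int_{\R^n}\phi(x,v,y,v)\,d\bar V(v)\,d\tau(x,y)
\end{equation*}
belongs to $P(W[\mu],W[\nu])$, satisfies $\pi_{13}\#T=\tau$, and transports the fibers at zero cost. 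Yet $W\neq V^v$ for every $v$, since the disintegration of $V^v[\mu]$ has Dirac fibers $\delta_{v(x)}$ and disintegrations are unique up to $\mu$-null sets; the finite-speed-diffusion PVF of Example \ref{ex:1} (Proposition \ref{prop:Ex1} verifies (H1) and (H3) for it) is another element of $PVec(\R^n)$ outside the image, and its fiber law depends on $\mu$ globally, not just on $x$. So the map $v\mapsto V^v$ is an injective monoid morphism, i.e.\ an isomorphism onto its image, which is how the abstract and introduction phrase the result (``monoid morphism'') and how the proposition must be read. Dropping your surjectivity paragraphs and keeping the rest gives a complete proof of the statement so interpreted.
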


\section{Finite speed diffusion and concentration}\label{sec:Ex}
In this section, we show examples of MDEs which reproduce diffusion 
and concentration phenomena.
The former can be obtained using PVF which depend on global quantities
and satisfy condition (H3) (while we also show that diffusion can not be
obtained by constant PVFs.) In particular we are able to model diffusions 
with uniformly bounded speed. 
Concentration is achieved by PVFs violating (H3), but still guaranteeing
convergence of LASs to unique limits and existence of Lipschitz semigroup.

\subsection{Diffusion}
Let us start proving a simple fact:
\begin{prop}\label{prop:const-V}
If a PVF $V$ does not depend on $\mu$, i.e. $V[\mu]=\mu\otimes \bar{V}$
for some $\bar{V}\in\PP_c(R^n)$,
then the solution to (\ref{eq:MDE-Cauchy}) obtained as limit LAS
is given by constant translation at speed $\bar{v}=\int v\,d\bar{V}$, 
i.e. for every Borel set $A$ one has $\mu(t)(A)=\mu_0(A-t\bar{v})$.
\end{prop}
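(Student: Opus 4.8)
The plan is to prove that when $V[\mu]=\mu\otimes\bar V$ is independent of $\mu$, the LAS limit is exactly the rigid translation $\mu(t)(A)=\mu_0(A-t\bar v)$ with $\bar v=\int v\,d\bar V$. The key heuristic is the Law of Large Numbers: at each Euler step every Dirac delta is spread over \emph{all} the velocities $v_j$ (weighted by $\bar V$), and since $\bar V$ does not change with $\mu$, the displacements applied to nearby masses are i.i.d.-like copies of the same velocity distribution; averaging them over the refining lattice collapses the spread to the mean velocity $\bar v$. Concretely, I would track the evolution through the recursion (\ref{eq:def-rec}) and show that the fiber-discretized plan, after $\ell$ steps of size $\Delta_N=1/N$, produces a measure whose barycenter has moved by $\ell\,\Delta_N\,\bar v$ while its spread about that barycenter stays controlled.

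First I would identify the candidate limit $\tilde\mu(t)=(\mathrm{Id}+t\bar v)\#\mu_0$, i.e. $\tilde\mu(t)(A)=\mu_0(A-t\bar v)$, which is a rigid translation and hence automatically a solution to (\ref{eq:MDE-Cauchy}): for $f\in\C^\infty_c(\R^n)$ one checks $\frac{d}{dt}\int f\,d\tilde\mu(t)=\int\nabla f(x-t\bar v)\cdot\bar v\,d\tilde\mu(t) \cdot \ldots$, which matches $\int_{T\R^n}\nabla f(x)\cdot v\,dV[\tilde\mu(t)](x,v)=\int\nabla f(x)\cdot\bar v\,d\tilde\mu(t)(x)$ since the fiber marginal of $V[\tilde\mu(t)]$ is $\bar V$ with mean $\bar v$. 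By Theorem \ref{th:MDE-ex} the LASs converge (up to subsequence) to \emph{some} solution $\mu(t)$, so it suffices to show this limit coincides with $\tilde\mu(t)$; the cleanest route is to estimate $W(\mu^N(t),\tilde\mu(t))\to 0$ directly.

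The core estimate I would carry out is a variance-type bound on one Euler step. Starting from $\mu^N_\ell=\sum_i m_i^{N,\ell}\delta_{x_i}$, the step (\ref{eq:def-rec}) replaces each $\delta_{x_i}$ by $\sum_j m^v_{ij}(V[\mu^N_\ell])\,\delta_{x_i+\Delta_N v_j}$. Because $V[\mu]=\mu\otimes\bar V$ is independent of $\mu$, the conditional velocity law attached to $x_i$ is (the lattice discretization of) $\bar V$, independent of $i$ and of $\ell$. Comparing $\mu^N_\ell$ to the shifted measure via the transport plan that moves mass $m^v_{ij}$ from $x_i+\ell\Delta_N\bar v$ to $x_i+\Delta_N v_j+\ell\Delta_N\bar v$, the net deviation after $\ell$ steps is a sum of $\ell$ centered increments $\Delta_N(v_j-\bar v)$ each of size $O(\Delta_N)$; the crucial point is that their \emph{mean} vanishes, so in the $1$-Wasserstein distance the error accumulated by the refining velocity grid $\Delta^v_N=1/N$ together with $\ell\le T/\Delta_N=TN$ steps is $O(\ell\,(\Delta_N)^2)+O(\Delta^v_N)=O(1/N)$. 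Combining with Lemma \ref{lem:As} to pass between $V[\mu^N_\ell]$ and its discretization $\A^v_N$, I would conclude $W(\mu^N(t),\tilde\mu(t))=O(1/N)$ uniformly on $[0,T]$.

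The main obstacle, and where care is needed, is making the ``Law of Large Numbers'' cancellation rigorous at the level of the $1$-Wasserstein distance rather than for second moments: since $W$ is a first-order (mean) distance, one must exploit that the common fiber measure $\bar V$ has a well-defined barycenter $\bar v$ and that the transport plan can be chosen to align the spread around the \emph{shared} mean. I expect this to be the delicate step, because naively bounding each step's displacement by $|v_j|$ (as in the proof of Theorem \ref{th:MDE-ex}) only gives the $O(1)$ speed bound, not cancellation; the gain comes precisely from subtracting $\bar v$ and using that identical velocity distributions are applied to all masses, so that the aggregate transport cost measures only fluctuations, which average out as the grid refines. Once this one-step variance estimate is established, summing over $\ell$ and applying Lemma \ref{lem:unif-comp} together with Theorem \ref{th:MDE-ex} yields convergence of the full LAS sequence to the rigid translation, completing the proof.
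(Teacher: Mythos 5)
Your overall strategy is the same as the paper's: identify the candidate limit $\tilde\mu(t)=(\mathrm{Id}+t\bar v)\#\mu_0$ and show that the spread produced by the velocity draws collapses onto the mean $\bar v$ by a Law-of-Large-Numbers cancellation (the paper implements this by representing $\mu^N_\ell$, for $\mu_0=\delta_0$, as the law of $\Delta_N\sum_{k=1}^{\ell}a^v_N(r^{(k)})$ with $r^{(k)}$ i.i.d.\ copies distributed as $\bar V$, and then invoking the LLN, before extending to finite sums of Dirac deltas and, by density, to all of $\PP_c(\R^n)$). However, your key quantitative step is wrong as stated. You claim that, because each increment $\Delta_N(v_j-\bar v)$ is centered, the accumulated $W_1$-error over $\ell\le TN$ steps is $O(\ell\,\Delta_N^2)=O(1/N)$. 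Centering does not make the \emph{per-step} Wasserstein error $O(\Delta_N^2)$: one step replaces $\delta_{x_i}$ by $\delta_{x_i}$ convolved with a centered measure of diameter $O(\Delta_N)$, and the $W_1$ distance between a Dirac and a centered spread of radius $a$ is of order $a$, not $a^2$ (take $\frac12\delta_{-a}+\frac12\delta_{a}$ versus $\delta_0$). Summing per-step errors by the triangle inequality therefore gives only $O(\ell\,\Delta_N)=O(1)$, i.e.\ no convergence; the quantity $\ell\,\Delta_N^2$ you wrote is the \emph{variance} of the $\ell$-step deviation, not its first moment, and $W_1$ is a first-moment distance.

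The cancellation must be captured globally, not step by step: after $\ell$ steps the mass initially at $x_i$ is distributed as $x_i$ plus a sum of $\ell$ \emph{independent} centered increments (independence across steps is exactly what the constancy of $\bar V$ buys, and needs to be set up explicitly, e.g.\ via the random-variable representation), so
\begin{equation*}
W\bigl(\mu^N_\ell,\ (\mathrm{Id}+\ell\Delta_N\bar v)\#\mu^N_0\bigr)\ \le\
E\Bigl[\Bigl|\sum_{k=1}^{\ell}\Delta_N\bigl(v^{(k)}-\bar v\bigr)\Bigr|\Bigr]
\ \le\ \Delta_N\sqrt{\ell\,\mathrm{Var}(\bar V)}\ =\ O\bigl(N^{-1/2}\bigr),
\end{equation*}
by Cauchy--Schwarz (or qualitatively by the LLN, as the paper does), plus the $O(\Delta^v_N)$ lattice error. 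This still tends to zero, so your conclusion survives, but the rate is $O(N^{-1/2})$ rather than the $O(N^{-1})$ you claim, and the justification ``mean vanishes, hence $O(\Delta_N^2)$ per step'' must be replaced by keeping the whole sum inside the expectation before estimating. With that repair (and cleaning up the garbled verification that $\tilde\mu$ solves the MDE), your argument matches the paper's.
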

\begin{proof}
Let $(\Omega, \B, P)$ be
a probability space and for every $W\in\PP_c(\R^n)$ we indicate by
$r_W:\Omega\to\R^n$ a random variable with distribution $W$. 
We also denote by $a^v_N:[-N,N]^n\to[-N,N]^n\subset\R^n$ 
the map corresponding to the projection used in the definition of $\A^v_N$ (see (\ref{eq:disc-v})). 
More precisely, 
we have $a^v_N(v_1,\ldots,v_n)=(\lfloor N\,v_1 \rfloor, \ldots, \lfloor N\,v_n\rfloor)$
($\lfloor s\rfloor = \sup \{n\in\N: n\leq s\}$.)\\
Let us first consider $\mu_0=\delta_0$. Then $\mu^N_\ell$ is the distribution
of the random variable:
\[
\Delta_N\sum_{i=1}^{\ell}a^v_N(r_{V[\mu^N_{\ell-1}]})=
\frac{1}{N}\sum_{i=1}^{\ell}a^v_N(r_{\bar{V}})=
\frac{\ell}{N}a^v_N(r_{\bar{V}}).
\]
Given $t\geq 0$,  set $\ell_N(t)=\lfloor Nt\rfloor$, then we have:
\[
r_{\mu^{N}_{\ell_N(t)}}-t\,\bar{v}=
\frac{\ell_N(t)}{N}a^v_N(r_{\bar{V}})-t\,\bar{v}=
\]
\[
\frac{\ell_N(t)}{N}(a^v_N(r_{\bar{V}})-r_{\bar{V}})+
\frac{\ell_N(t)}{N}(r_{\bar{V}}-\bar{v})+\left(\frac{\ell_N(t)}{N}-t\right)\bar{v}\doteq
I_1+I_2+I_3.
\]
Now $I_1$ tends to zero uniformly. Since $r_{\bar{V}}$ is bounded
and $E[r_{\bar{V}}]= \int v\,d\bar{V}=\bar{v}$,
by the Law of Large Numbers (see Theorem 10.12 in \cite{Folland}),
$(r_{\bar{V}}-\bar{v}) = \frac{1}{N}\sum_{i=1}^N (r_{\bar{V}}-\bar{v})\to 0$ almost surely
as $N\to\infty$.
Finally $I_3$ tends to zero uniformly by definition of $\ell_N(t)$.
The same proof works for finite sum of Dirac deltas, thus, by density,
on the whole set $\PP_c(\R^n)$.
\end{proof}

We now provide a first example of finite speed diffusion obtained by
letting the PVF $V$ depend on global properties of $\mu$.
\begin{example}\label{ex:1}
For every $\mu\in\PP_c(\R)$ define:
\[
B(\mu)=\sup \left\{x:\mu(]-\infty,x])\leq \frac12\right\}.
\]
Notice that we have $\mu(]-\infty,B(\mu)[)\leq \frac12\leq \mu(]-\infty,B(\mu)])$,
then we set $\eta=\mu(]-\infty,B(\mu)]) - \frac12$ so 
$\mu(\{B(\mu)\})=\eta+\frac12-\mu(]-\infty,B(\mu)[)$.
We define $V[\mu]=\mu\otimes \nu_x$, with:
\begin{equation}\label{eq:PVF-Bar}
\nu_x=\left\{
\begin{array}{ll}
\delta_{-1} & \textrm{if}\ x<B(\mu)\\
\delta_{1} & \textrm{if}\ x>B(\mu)\\
\eta\delta_{1}+
\left(\frac12-\mu(]-\infty,B(\mu)[)\right)\delta_{-1} & \textrm{if}\ x=B(\mu)
\end{array}
\right.
\end{equation}
\end{example}
We have the following:
\begin{prop}\label{prop:Ex1}
The PVF $V$ defined in (\ref{eq:PVF-Bar}) satisfies (H1) and (H3) and LASs admit a unique limit,
thus the conclusions of Theorem \ref{th:uniq-sem-LAS} holds true. Moreover,
the solution to (\ref{eq:MDE-Cauchy}), obtained as limit LASs $\mu^N$, satisfies:
\[
\mu(t)(A) = \mu_0((A\cap ]-\infty,B(\mu)-t[)+t)
+ \mu_0((A\cap ]B(\mu)+t,+\infty[)-t)
\]
\[
+ \eta \delta_{B(\mu)+t}(A)+ (\frac12-\mu(]-\infty,B(\mu)[))\delta_{B(\mu)-t}(A).
\]
In particular:
\begin{itemize}
\item[i)] The solution to (\ref{eq:MDE-Cauchy}) with $\mu_0=\delta_{x_0}$
is given by $\mu(t)=\frac12 \delta_{x_0+t}+\frac12 \delta_{x_0-t}$;
\item[ii)] The solution to (\ref{eq:MDE-Cauchy}) with $\mu_0=\chi_{[a,b]}\,\lambda$
(where $\chi$ is the indicator function and $\lambda$ is the Lebesgue measure) 
is given by $\mu(t)=\chi_{[a-t,\frac{a+b}{2}-t]}\lambda +
\chi_{[\frac{a+b}{2}+t,b+t]}\lambda$.
\end{itemize}
\end{prop}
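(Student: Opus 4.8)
The plan is to reduce everything to the one–dimensional quantile picture. For $\rho\in\PP_c(\R)$ let $X_\rho:[0,1]\to\R$ denote its quantile function (the generalized inverse of the distribution function), so that $W(\rho,\sigma)=\int_0^1|X_\rho(s)-X_\sigma(s)|\,ds$ and $X_\rho$ realizes the monotone optimal plan. The decisive observation is that in these coordinates the velocity prescribed by (\ref{eq:PVF-Bar}) is \emph{independent of} $\mu$: the median atom of $\rho$ occupies the quantile interval $[p,q]$ with $p=\rho(]-\infty,B(\rho)[)\le\tfrac12\le q=\rho(]-\infty,B(\rho)])$, and the splitting rule sends the lower half $[p,\tfrac12)$ to the left and the upper half $[\tfrac12,q]$ to the right. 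Hence $V[\rho]$ is the image of Lebesgue measure on $[0,1]$ under $s\mapsto(X_\rho(s),\mathrm{sgn}(s-\tfrac12))$.

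From here (H1) is immediate, since every velocity in $Supp(V[\mu])$ equals $\pm1$, so it holds with $C=1$. For (H3) I would couple $V[\mu]$ and $V[\nu]$ through the common quantile parameter, i.e.\ take $T$ the image of Lebesgue measure under $s\mapsto(X_\mu(s),\mathrm{sgn}(s-\tfrac12),X_\nu(s),\mathrm{sgn}(s-\tfrac12))$. Its base marginal is the monotone (hence optimal) plan, while the fiber integrand $|\mathrm{sgn}(s-\tfrac12)-\mathrm{sgn}(s-\tfrac12)|$ vanishes for a.e.\ $s$. Therefore $\WW(V[\mu],V[\nu])=0$, so (H3) holds with $K=0$; by Lemma \ref{lem:WT} this also yields (H2), making Theorems \ref{th:MDE} and \ref{th:uniq-sem-LAS} available.

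The core of the argument is that the LAS is an \emph{exact} half–translation in quantile coordinates. Since $\pm1$ lie on the velocity lattice $\Z/N$, the operator $\A^v_N$ reproduces these velocities with no binning error, so each mass is displaced by $\Delta_N\,\mathrm{sgn}(s-\tfrac12)=\pm\tfrac1N$. I would then check that $s\mapsto X_{\mu^N_\ell}(s)+\Delta_N\,\mathrm{sgn}(s-\tfrac12)$ is nondecreasing: the shift preserves order on each of $[0,\tfrac12)$ and $(\tfrac12,1]$ and opens an upward jump of size $\ge 2\Delta_N$ across $s=\tfrac12$. Consequently this map is itself the quantile function of $\mu^N_{\ell+1}$, and by induction, including the linear interpolation (\ref{eq:def-mu-int}), one gets $X_{\mu^N(\tau)}(s)=X_{\A^x_N(\mu_0)}(s)+\tau\,\mathrm{sgn}(s-\tfrac12)$ for all $\tau$. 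The main obstacle is exactly this identity: one must verify that the discrete median atom of $\mu^N_\ell$ is split by (\ref{eq:PVF-Bar}) precisely at the quantile $\tfrac12$, and that $\A^x_N,\A^v_N$ introduce no error beyond the spatial discretization of the initial datum.

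Finally, $X_{\A^x_N(\mu_0)}\to X_{\mu_0}$ in $L^1([0,1])$ by Lemma \ref{lem:As}, so the LASs converge for every $\tau$ to the unique curve with quantile function $X_{\mu_0}(s)+\tau\,\mathrm{sgn}(s-\tfrac12)$; uniqueness of this limit lets me invoke Theorem \ref{th:uniq-sem-LAS}. Reading the limiting quantile function back as a measure gives the stated formula: the part of $\mu_0$ strictly left of $B(\mu)$ is translated by $-t$, the part strictly right by $+t$, and the median atom contributes $\eta\,\delta_{B(\mu)+t}$ together with $(\tfrac12-\mu(]-\infty,B(\mu)[))\,\delta_{B(\mu)-t}$. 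Specializing to $\mu_0=\delta_{x_0}$, where the whole mass is the median atom split evenly, yields (i); specializing to $\mu_0=\chi_{[a,b]}\lambda$, which has no atoms and median $\tfrac{a+b}2$, yields (ii).
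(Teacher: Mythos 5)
Your proof is correct and follows essentially the same route as the paper: (H1) is immediate since all velocities are $\pm1$, and (H3) holds with $K=0$ because the monotone coupling is optimal on the base in one dimension and pairs left-of-median mass with left-of-median mass, so the fiber cost vanishes and $\WW(V[\mu],V[\nu])=0$ --- the paper invokes Villani's Theorem 2.18 and Remark 2.19 where you use the quantile parametrization, which realizes the same monotone optimal plan. The explicit solution formula, which the paper leaves as ``direct computations'', is carried out carefully and correctly in your quantile-coordinate analysis of the LAS (including the useful observation that $\pm1$ lies exactly on the velocity lattice $\Z/N$, so $\A^v_N$ introduces no binning error and the scheme is an exact half-translation).
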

\begin{proof}
The PVF $V$ satisfies (H1) by definition. Given two measures 
$\mu$, $\nu\in\PP_c(\R)$
notice that any optimal plan between $\mu$ and $\nu$
moves the mass of $\mu$ to the left, respectively right, of $B(\mu)$
to the mass of $\nu$ the left, respectively right, of $B(\nu)$
(see Theorem 2.18 and Remark 2.19 (ii) in \cite{Villani}.)
Therefore $\WW(V[\mu],V[\nu])=0$ and (H3) is trivially satisfied.\\
The other claims follow by direct computations.
\end{proof}

Example \ref{ex:1} can be generalized as follows.
\begin{example}
Consider an increasing map $\varphi:[0,1]\to\R$ and define
$$V_\varphi[\mu]=\mu\otimes \varphi\#( \chi_{[F_\mu(x-),F_\mu(x)]}\lambda)$$
where 
$F_\mu(x)=\mu(]-\infty,x])$, the cumulative distribution of $\mu$,
and $\lambda$ the Lebesgue measure. In simple words $V[\mu]$
moves the ordered masses with speed prescribed by $\varphi$.\\
Following the same proof of Proposition \ref{prop:Ex1}, we have that
$V_\varphi$ satisfies (H1) and (H3) if $\varphi$ is bounded.
If $\varphi$ is a diffeomorphism, the conclusions of Theorem \ref{th:uniq-sem-LAS} holds true
and the solution from $\delta_0$
is given by $g(t,x)\lambda$ with 
$$g(t,x)=\frac{1}{t\varphi '(\varphi^{-1}(\frac{x}{t}))}=
\frac{(\varphi^{-1})'(\frac{x}{t})}{t}.$$
For example if $\varphi(\alpha)=\alpha-\frac12$ then 
$g(t,x)=\frac{1}{t}\chi_{[-\frac{t}{2},\frac{t}{2}]}$
so we get uniformly distributed masses with maximal speed $1$.
For $\varphi(x)=4\, \rm{sgn}(\alpha-\frac12)\,(\alpha-\frac12)^2$
we get $g(t,x)=\frac{1}{4\sqrt{tx}}$ which is unbounded at $0$.
In general $V_\varphi$ gives rise to any $g$, which is solution to the equation
$g_t+\frac{x}{t}g_x=0$.
\end{example}

\begin{example}\label{ex:non-uni}
Let us go back to the question of uniqueness of solutions. Consider
the PVF $V_1$ defined in Example \ref{ex:1} and let 
$V_2[\mu]=\mu\otimes (\frac{1}{2}\delta_1+\frac{1}{2}\delta_{-1})$.
From Proposition \ref{prop:Ex1}, the solution to 
$\dot{\mu}=V_1[\mu]$, $\mu(0)=\delta_0$ is given by 
$\mu_1(t)=\frac{1}{2}\delta_t+\frac{1}{2}\delta_{-t}$.
From Proposition \ref{prop:const-V},
the solution to $\dot{\mu}=V_2[\mu]$, $\mu(0)=\delta_0$ is given by 
$\mu_2(t)=\delta_0$.
It is easy to check that $V_1[\delta_0]=V_2[\delta_0]$
and that $\mu_2$ satisfies (\ref{eq:sol-MDE}) both for $V_1$ and $V_2$.\\
It is also interesting to notice that the LASs $\mu^N$ for $\mu_1$
coincide with $\mu_1$ for every $N$.
On the other side, given $f\in\C^{\infty}_c$, 
$\int f\,d\mu_2(t)\equiv f(0)$ so $\frac{d}{dt}\int f\,d\mu_2(t)\equiv 0$.
Thus $\mu_1$ is trivially approximated by LASs, while
$\mu_2$ gives the trivial solution to (\ref{eq:sol-MDE}).
\end{example}

\subsection{Concentration}
It is well known that, to achieve existence and uniqueness of solutions 
to an ODE $\dot{x}=v(x)$, the locally Lipschitz condition on the vector field $v$ 
can be relaxed to a one-sided locally Lipschitz condition:
\begin{equation}\label{eq:one-sided-L}
\langle v(x)-v(y),x-y\rangle \leq L\ |x-y|^2,
\end{equation}
where $\langle \cdot, \cdot \rangle$ indicates the scalar product of $\R^n$.
Similarly we can relax condition (H3) as follows. Define:
\begin{eqnarray}\label{eq:WW'}
& \WW' (V_1,V_2)=\nonumber\\
& \inf \left\{
\int_{T\R^n\times T\R^n} \frac{\langle v-w, x-y\rangle }{|x-y|} 
\ dT(x,v,y,w)\ :\ T\in P(V_1,V_2), \ \pi_{13}\# T\in P^{opt}(\mu_1,\mu_2) \right\},
\end{eqnarray}
then we assume:
\begin{itemize}
\item[(H4)] For every $R>0$ there exists $K=K(R)>0$ such that if 
$Supp(\mu),Supp(\nu)\subset B(0,R)$ then
\begin{equation}
\WW'(V[\mu],V[\nu])\leq K\ W(\mu,\nu).
\end{equation}
\end{itemize}
We have the following:
\begin{theorem}\label{th:MDE'}
Given $V$ satisfying (H1) and (H4), and $T>0$, passing to the limit in LASs
we can define a Lipschitz semigroup of solutions to (\ref{eq:MDE-Cauchy}).
\end{theorem}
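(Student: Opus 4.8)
The plan is to follow the architecture of the proof of Theorem~\ref{th:MDE} essentially verbatim, replacing the role of (H3) by (H4) in the single recursive estimate that compares two LASs with different initial data. First I would fix $\mu_0,\nu_0\in\PP_c(\R^n)$ and denote by $\mu^N,\nu^N$ the corresponding LASs. The uniform support bounds of Lemma~\ref{lem:bound-supp}, the velocity bound coming from (H1), and the initial estimate $W(\mu_0^N,\nu_0^N)\le W(\mu_0,\nu_0)+2\Delta^x_N$ carry over unchanged. The whole difficulty is therefore concentrated in producing a recursion of the form $W(\mu^N_{\ell+1},\nu^N_{\ell+1})\le(1+K\Delta_N)\,W(\mu^N_\ell,\nu^N_\ell)+\rho_N^\ell$ with $\sum_\ell\rho_N^\ell\to0$, from which $W(\mu^N_\ell,\nu^N_\ell)\le e^{K\ell\Delta_N}\big(W(\mu_0,\nu_0)+o(1)\big)$ follows exactly as in (\ref{eq:fin-est}).

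To obtain the recursion I would invoke (H4) to select $T\in P(V[\mu^N_\ell],V[\nu^N_\ell])$ with $\pi_{13}\#T\in P^{opt}(\mu^N_\ell,\nu^N_\ell)$ and $\int\frac{\langle v-w,\,x-y\rangle}{|x-y|}\,dT\le K\,W(\mu^N_\ell,\nu^N_\ell)+\Delta_N$, and push it forward through the one-step maps $(x,v)\mapsto x+\Delta_N v$, $(y,w)\mapsto y+\Delta_N w$ to build a plan $\tau\in P(\mu^N_{\ell+1},\nu^N_{\ell+1})$ exactly as in Theorem~\ref{th:MDE}. The essential difference is that the triangle inequality used there must be replaced by the quadratic expansion
\begin{equation*}
|x+\Delta_N v-y-\Delta_N w|^2=|x-y|^2+2\Delta_N\langle x-y,v-w\rangle+\Delta_N^2|v-w|^2,
\end{equation*}
which is what makes the one-sided quantity controlled by (H4) appear. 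Taking square roots, for $x\ne y$ one has
\begin{equation*}
|x+\Delta_N v-y-\Delta_N w|\le |x-y|+\Delta_N\,\frac{\langle x-y,v-w\rangle}{|x-y|}+\frac{\Delta_N^2|v-w|^2}{2|x-y|}.
\end{equation*}
Integrating against $T$, the first term yields $W(\mu^N_\ell,\nu^N_\ell)$ since $\pi_{13}\#T$ is optimal, and the second yields at most $\Delta_N\big(K\,W(\mu^N_\ell,\nu^N_\ell)+\Delta_N\big)$, producing precisely the factor $(1+K\Delta_N)$ and a harmless $\Delta_N^2$ contribution.

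The main obstacle is the residual quadratic term $\rho_N^\ell=\int\frac{\Delta_N^2|v-w|^2}{2|x-y|}\,dT$, together with the diagonal set $\{x=y\}$, where the expansion degenerates and the one-step displacement is simply $\Delta_N|v-w|$. Since (H4) controls only the component of $v-w$ parallel to $x-y$, it gives no grip on this term, and the lattice spacing $\Delta^x_N=1/N^2$ is far too fine to compensate the factor $\Delta_N^2=1/N^2$ in the denominator; this is exactly where the one-sided condition is weaker than (H3), and I expect it to be the crux of the whole argument. The cleanest situation is dimension one: there $v-w$ is automatically collinear with $x-y$, the orthogonal part vanishes, the square-root expansion becomes the exact identity $|x+\Delta_N v-y-\Delta_N w|=\big|(x-y)+\Delta_N(v-w)\big|$, and for $\Delta_N$ small relative to the separation this equals $|x-y|+\Delta_N\frac{\langle x-y,v-w\rangle}{|x-y|}$, so the residual term is absent and the recursion closes with $\rho_N^\ell=O(\Delta_N^2)$. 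In higher dimension I would try either to choose the coupling $T$ so as to keep the orthogonal velocity spread of order the base displacement, or to track the squared transport cost and pass through a $W_2$-type estimate; I regard closing this gap as the principal difficulty.

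Once the recursion is in hand, the remainder is identical to the proofs of Theorems~\ref{th:MDE-ex} and~\ref{th:MDE}: Ascoli--Arzel\`a applied to the family $\mu^N$, uniformly Lipschitz in time by (\ref{eq:Lip-time}), together with a diagonal extraction over the countable dense set $\DD^q$, produces a limit $S_t\mu_0$; the Kantorovich--Rubinstein computation of Theorem~\ref{th:MDE-ex} shows each trajectory solves (\ref{eq:MDE-Cauchy}); the limiting form of the recursion gives $W(S_t\mu,S_t\nu)\le e^{Kt}W(\mu,\nu)$ on $\DD^q$, which extends to all of $\PP_c(\R^n)$ by density and simultaneously yields properties ii) and iii) of Definition~\ref{def:Lip-semigroup}; and the semigroup property i) follows from the floor-function comparison of LASs exactly as at the end of the proof of Theorem~\ref{th:MDE}.
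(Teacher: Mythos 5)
Your proposal follows exactly the route the paper itself takes. The paper's proof of Theorem \ref{th:MDE'} is a short remark: repeat the proof of Theorem \ref{th:MDE}, and in the one-step comparison of the two LASs replace the triangle inequality by the first-order expansion $|a+\epsilon b|=|a|+\epsilon\,\langle b,a\rangle/|a|+o(\epsilon)$, so that
\[
|(x-y)+\Delta_N(v-w)|\;\le\;|x-y|+\Delta_N\,\frac{\langle v-w,\,x-y\rangle}{|x-y|}+o(\Delta_N),
\]
after which (H4) is asserted to give the recursion (\ref{eq:rec-est12}) and the rest of the argument (Ascoli--Arzel\`a, the diagonal extraction over $\DD^q$, the Kantorovich--Rubinstein passage to the limit, the semigroup property) is unchanged. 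Your quadratic expansion followed by a square root is just a more explicit version of that same step, and your treatment of the two leading terms ($W_1$-optimality of $\pi_{13}\#T$ on the base for the zeroth-order term, (H4) for the first-order term) matches the paper word for word.

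The residual term you isolate, $\int\Delta_N^2|v-w|^2/(2|x-y|)\,dT$ together with the behaviour on and near the diagonal $\{x=y\}$, is precisely what the paper compresses into the symbol $o(\Delta_N)$; the paper gives no argument that this remainder is uniform over the support of $T$ or summable over the $\sim N$ time steps. So you have not missed an idea that the paper supplies --- you have made explicit the point its proof leaves implicit, and neither you nor the paper closes it. Two comments on your proposed repairs. First, the squared-cost route is indeed the standard way one-sided conditions are exploited (expanding $|x-y+\Delta_N(v-w)|^2$ produces the inner-product term with no division by $|x-y|$ and a remainder $\Delta_N^2\int|v-w|^2\,dT$ that is genuinely $O(\Delta_N^2)$ by (H1) and Lemma \ref{lem:bound-supp}), but it forces you to work with the quadratic Wasserstein distance and to restate (H4) with $W_2$-optimal base marginals and an $|x-y|^2$ bound, which is not what the paper does. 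Second, your claim that the residual is absent in dimension one is not quite right: when $|x-y|<\Delta_N|v-w|$ the sign of $x-y$ can flip in a single step, and then the first-order formula \emph{underestimates} the new distance by up to $2\Delta_N|v-w|$, so the near-diagonal set must still be controlled even for $n=1$. In short, your reconstruction is faithful to the paper's argument, and the crux you identify is a genuine gap --- but it is a gap in the paper's own proof as much as in yours.
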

\begin{proof}
We can follow the same proof as for Theorem \ref{th:MDE} with the following
modification. To estimate the Wasserstein distance between $\mu^N_{\ell}$
and $\nu^N_{\ell}$, we proceed as follows. First notice that
for $a,b\in\R^n$ and $\epsilon>0$ we have
$\left. \frac{d}{d\epsilon}|a+\epsilon b|\right|_{\epsilon=0}
=\frac{\langle b, a \rangle}{|a|}$
thus  $|a+\epsilon b|=|a|+\epsilon \frac{\langle b, a\rangle}{|a|}+o(\epsilon)$.
We can then write:
\[
|(x+y)+\Delta_N(v+w)|\leq |x-y|+\Delta_N
\langle v-w, \frac{x-y}{|x-y|}| \rangle +o(\Delta_N).
\]
Now assumption (H4) guarantees that  (\ref{eq:rec-est12}) is still true 
and we can conclude in the same way as for Theorem  \ref{th:MDE}.
\end{proof}
Examples of contraction are obatined easily as follows.
\begin{example}
Consider an ODE $\dot{x}=v(x)$ with $v$ satisfying (\ref{eq:one-sided-L}) 
(with $L$ bounded on compact sets). Then condition (H4) holds true and we can
apply Theorem \ref{th:MDE'}.\\
A typical example is $v(x)=-{\rm sgn} (x)\sqrt{|x|}$. If we start with a uniform mass
distributed on the interval $[-1,1]$ the solution converges in time $t=1$
to $\delta_0$.
\end{example}
\begin{example}
Consider a scalar conservation law:
\[
u_t+\nabla\cdot(a(t,x)\ u)=0,
\]
with $a$ satisfying $\langle a(t,x)-a(t,y),x-y\rangle \leq L\ |x-y|^2$
uniformly in $t$ and on compact sets. Then the conclusions of Theorem \ref{th:ODE} hold true
for the ODE $\dot{x}=a(t,x)$.\\
One can thus recover the results of \cite{PoupaudRascle} for 
the compressive case (e.g. $a(t,x)=-\rm{sgn}(x)$.)
\end{example}

\section{Mean-field limits for multi-particle systems}\label{sec:kin}
A typical example of multi-particle system is given by the system of ODEs:
\begin{equation}\label{eq:kin-sys}
\dot{x}_i=\frac{1}{m}\sum_{j=1}^m \phi(x_j-x_i),
\end{equation}
where $x_i\in\R^n$, $i=1,\ldots,m$, 
and $\phi$ is locally Lipschitz continuous and uniformly bounded. 
For every $m$ and $x(\cdot)=(x_1(\cdot),\ldots,x_m(\cdot))\in\R^{nm}$, 
solution to (\ref{eq:kin-sys}),
consider the empirical probability measure of $m$ particles:
\begin{equation}\label{eq:emp-meas}
\mu_m(t)=\frac{1}{m}\sum_{i=1}^m \delta_{x_i(t)}.
\end{equation}
A typical problem is to understand the limit of $\mu_m$ as $m\to\infty$
(see for instance \cite{Golse}) and applications include problems from
biology, crowd dynamics and other fields, see \cite{CFRT,CPT}.
Dobrushin (see \cite{Dobrushin}) proved convergence, for the 
Wasserstein metric topology, of the empirical probability measures to solutions
of the mean field equation:
\[
\mu_t+\nabla_x\cdot \left(\left(\int \phi(x-y)d\mu(y)\right)\mu\right)=0.
\]
Let us consider a more general model:
\begin{equation}\label{eq:gen-kin}
\dot{x}_i=v^m_i(x)
\end{equation}
where $x=(x_1,\ldots,x_m)$, $x_i\in\R^n$ and $v^m_i$ is 
locally Lipschitz continuous and uniformly bounded. 
We assume a condition of indistinguibility of particles.
For every $m$, we indicate by $\Sigma_m$ the set of permuations
$\sigma$ over the set $\{1,\ldots,m\}$.
Given $\sigma\in\Sigma_m$ and $x=(x_1,\ldots,x_m)$, $x_i\in\R^n$,
we define $x_\sigma=(x_{\sigma(1)},\ldots,x_{\sigma(m)})$.
Then we assume:
\begin{itemize}
\item[(IP)] For every $x=(x_1,\ldots,x_m)$, $x_i\in\R^n$,
and $\sigma\in\Sigma_m$, it holds $v^m_{\sigma(i)}(x)=v^m_i(x_\sigma)$.
\end{itemize}
Notice that, given two empirical measures $\mu_j=\sum_{i=1}^m \delta_{x^j_i}$,
$j=1,2$, the Wasserstein distance between them is given by:
\[
W(\mu_1,\mu_2)=\inf_{\sigma\in\Sigma_m}
\frac{1}{m}\sum_i |x^1_i-x^2_{\sigma(i)}|,
\]
then, setting $x^j=(x^j_1,\ldots,x^j_m)$, we can estimate:
\[
\frac{1}{m}\sum_i |v^m_i(x^1)-v^m_{\sigma(i)}(x^2)|=
\frac{1}{m}\sum_i |v^m_i(x^1)-v^m_i(x^2_{\sigma})|\leq
\]
\[
\frac{1}{m}\sum_i L^m_i\ |x^1-x^2_\sigma|=
\frac{1}{m}\left(\sum_i L^m_i\right) \left(\sum_k |x^1_k-x^2_{\sigma(k)}|^2\right)^{\frac12}\leq
\left(\sum_i L^m_i\right)\cdot \frac{1}{m} \sum_k |x^1_k-x^2_{\sigma(k)}|,
\]
where $L^m_i$ is a (local) Lipschitz constant of $v^m_i$.
Let $\Sigma_m(\mu_1,\mu_2)$ be the set of $\sigma\in\Sigma_m$
which realize the Wasserstein distance $W(\mu_1,\mu_2)$, then:
\begin{eqnarray}\label{eq:est-kin}
\inf_{\sigma\in\Sigma_m(\mu_1,\mu_2)} \frac{1}{m}\sum_i |v^m_i(x^1)-v^m_{\sigma(i)}(x^2)| &\leq&
\inf_{\sigma\in\Sigma_m(\mu_1,\mu_2)}
\left(\sum_i L^m_i\right)\cdot \frac{1}{m} \sum_k |x^1_k-x^2_{\sigma(k)}|
\nonumber\\
&=& \left(\sum_i L^m_i\right)\cdot W(\mu_1,\mu_2).
\end{eqnarray}
The left-hand side of (\ref{eq:est-kin})
is precisely the term appearing in the definition of $\WW$ (see (\ref{eq:WW}))
if $V$ is a PVF corresponding to the system (\ref{eq:gen-kin}).
Assume there exist uniform bounds on the Lipschitz constants 
of $v^m_i$ and a PVF obtained as limit as $m\to\infty$ in the following sense:
\begin{itemize}
\item[(A)] If $L^m_i(R)$ is the Lipschitz constant of $v^m_i$ over the set $B(0,R)$,
then $\sup_m \sum_i L^m_i(R)<+\infty$ for every $R$.\\
Moreover, there exists a PVF $V$ such that the following holds true.
For every sequence $x^N=(x^N_1,\ldots,x^N_{m(N)})$, $N\in\N$, $m(N)\in\N$,
$x^N_i\in\R^n$, define $\mu_N=\frac{1}{m(N)}\sum_{i=1}^{m(N)}\delta_{x^N_i}$.
If there exists $R>0$ such that $|x^N_i|\leq R$ and $\mu\in\PP_c(\R^n)$ such that
$\lim_{N\to\infty} W(\mu_N,\mu)=0$, then
\begin{equation}\label{eq:(A)}
\lim_{N\to\infty}
\WW\left(\frac{1}{m(N)}\sum_{i=1}^{m(N)}\delta_{(x^N_i,v^{m(N)}_i(x^N))},V[\mu]\right)=0.
\end{equation}
\end{itemize}
Then we have the following:
\begin{theorem}\label{th:MDE-kinetic}
Consider the system (\ref{eq:gen-kin}), assume $v^m_i$ locally Lipschitz and bounded, (IP) and
(A) hold true, and denote by $V$ the PVF given by (A). 
Then $V$ satisfies (H3) and
the empirical probability measures (\ref{eq:emp-meas}),
where $x=(x_1,\ldots,x_m)(t)$ solves (\ref{eq:gen-kin}),
are solutions to the MDE $\dot{\mu}=V[\mu]$.
Moreover, there exists a unique Lipschitz semigroup for the MDE
whose trajectories coincide with the empirical probability measures
for finite sums of Dirac deltas.
\end{theorem}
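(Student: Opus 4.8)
The plan is to reduce the statement to the abstract theory of Sections \ref{sec:Lip} and \ref{sec:Dirac} by verifying, in turn, (H1) and (H3) for $V$, then that each empirical curve solves the MDE, and finally the unique-limit hypothesis of Theorem \ref{th:uniq-sem-LAS}. Throughout, for an equal-mass configuration $y=(y_1,\dots,y_m)$ I write $\hat V^m[y]=\frac1m\sum_i\delta_{(y_i,v^m_i(y))}$ for the associated discrete velocity measure, so that (\ref{eq:(A)}) reads $\WW(\hat V^{m(N)}[x^N],V[\mu])\to0$. Condition (H1) is immediate: uniform boundedness of the $v^m_i$, say $|v^m_i|\le M$, places the fiber support of every $\hat V^m[y]$ inside $\overline{B(0,M)}$, and since $\WW$ controls the fiber marginals along base-optimal couplings, the $\WW$-limit $V[\mu]$ furnished by (A) has fiber support in $\overline{B(0,M)}$ as well; thus (H1) holds with a bounded (hence sublinear) fiber.

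For (H3) the estimate (\ref{eq:est-kin}) already does the work on empirical measures: its right-hand side is a competitor for the infimum defining $\WW$, so for two $m$-particle configurations $x^1,x^2$ with empirical measures $\mu^1,\mu^2$ one has $\WW(\hat V^m[x^1],\hat V^m[x^2])\le(\sum_i L^m_i)\,W(\mu^1,\mu^2)\le C\,W(\mu^1,\mu^2)$, with $C=\sup_m\sum_i L^m_i<\infty$ uniform on each ball by (A). To propagate this bound to arbitrary $\mu,\nu\in\PP_c(\R^n)$ I would mimic the second part of the proof of Theorem \ref{th:ODE}: approximate $\mu,\nu$ by equal-mass empirical measures $\mu^N,\nu^N$ with a common particle number, obtain from (A) base-optimal plans joining $V[\mu],V[\nu]$ to $\hat V^{m(N)}[\cdot]$ with vanishing fiber cost, use the discrete estimate for the middle plan, glue the three plans by the gluing lemma of \cite{AGS}, and pass to a narrow limit. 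Lower semicontinuity of the fiber cost then produces $\tilde T\in P(V[\mu],V[\nu])$ which is base-optimal and satisfies the fiber bound $\le C\,W(\mu,\nu)$, i.e.\ (H3). I expect this to be the main obstacle: because $\WW$ is not a metric and obeys no triangle inequality (see the remark following (\ref{eq:WW})), the passage from empirical to general measures cannot be done by direct chaining and must go through this compactness argument.

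Next I would show that, for each fixed $m$, the curve $\mu_m(t)=\frac1m\sum_i\delta_{x_i(t)}$ solves the MDE. The crucial identity is $V[\mu_m(t)]=\hat V^m[x(t)]$, which I obtain by feeding (A) the constant sequence $x^N\equiv x(t)$, $m(N)\equiv m$: then (\ref{eq:(A)}) forces $\WW(\hat V^m[x(t)],V[\mu_m(t)])=0$, and as the two measures share the base marginal $\mu_m(t)$, the (attained) minimizing plan is base-optimal with cost $W(\mu_m(t),\mu_m(t))=0$, hence supported on $\{x=y\}$, while vanishing fiber cost forces $v=w$; the plan is therefore diagonal and the two measures coincide. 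Differentiating $t\mapsto\frac1m\sum_i f(x_i(t))$ and using $\dot x_i=v^m_i(x)$ yields $\frac{d}{dt}\int f\,d\mu_m(t)=\frac1m\sum_i\nabla f(x_i(t))\cdot v^m_i(x(t))=\int_{T\R^n}\nabla f\cdot v\,dV[\mu_m(t)]$, while the required absolute continuity and $L^1$-integrability are immediate from the $C^1$ regularity of $x(\cdot)$ and the bound $M$; this is exactly Definition \ref{def:sol-MDE}.

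Finally, (H1) and (H3) let me invoke Theorem \ref{th:uniq-sem-LAS}, so it remains to check that LASs converge to a unique limit for every finite sum of Dirac deltas. For a datum with rational masses, written as an equal-mass configuration with multiplicities, indistinguishability (IP) forces coincident particles to carry the same velocity, so by the identity of the previous step $V[\mu]$ is again a single-velocity field on the support; the recursion (\ref{eq:def-rec}) then becomes an Euler scheme with $O(1/N)$ velocity rounding for the ODE system (\ref{eq:gen-kin}), which converges uniformly on $[0,T]$ to $\mu_m(t)$ exactly as in the last paragraph of the proof of Theorem \ref{th:ODE}. The unique limit is thus the empirical evolution, and the Lipschitz dependence (\ref{eq:cont-dep}) coming from (H3) extends uniqueness of the LAS limit to real masses by density. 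Theorem \ref{th:uniq-sem-LAS} then delivers a unique Lipschitz semigroup obtained as LAS limits, whose trajectories coincide with the empirical measures on finite sums of Dirac deltas, as claimed.
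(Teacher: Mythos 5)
Your proposal is correct and follows essentially the same route as the paper: identifying $V$ on empirical measures via the constant sequence in (A), deducing (H1) from the uniform bound on the $v^m_i$, proving (H3) by the gluing/narrow-compactness/lower-semicontinuity argument borrowed from the proof of Theorem \ref{th:ODE}, and obtaining uniqueness through the Dirac-germ machinery (the paper invokes Theorem \ref{th:uniq-sem} directly after defining the germ from the empirical measures, while you route through Theorem \ref{th:uniq-sem-LAS}, which amounts to the same thing). Your explicit treatment of non-equal and irrational masses via density is a welcome clarification of a point the paper leaves implicit.
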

\begin{proof}
Let $x=(x_1,\ldots,x_m)$, $x_i\in\R^n$,  and $\mu=\frac{1}{m}\sum_{i=1}^{m}\delta_{x_i}$, 
then by taking the constant sequence in (A) $\mu_N\equiv \mu$, $m(N)=m$, we deduce
$\WW(\frac{1}{m}\sum_{i=1}^{m}\delta_{(x_i,v^{m}_i(x))},V[\mu])=0$,
thus, by Lemma \ref{lem:WT}, $V[\mu]=\frac{1}{m}\sum_{i=1}^{m}\delta_{(x_i,v^{m}_i(x))}$.
Now, given $\mu\in\PP_c(\R^n)$, let $\mu_N$ be a sequence of finite sums of Dirac deltas as in (A)
with $W(\mu_N,\mu)\to 0$. From (\ref{eq:(A)}) and Lemma \ref{lem:WT}, 
we deduce $W^{T\R^n}(V[\mu],V[\mu_{N+1}])\to 0$.
Therefore we can uniquely define $V$ on the whole $\PP_c(\R^n)$ by approximation.\\
Property (H1) for $V$ follows from the boundedness of $v^m_i$.
To prove (H3), consider $\mu_i\in\PP_c(\R^n)$, $i=1,2$ and let
$\mu^i_N$ be sequences as in (A) such that $\lim_{N\to\infty} W(\mu^i_N,\mu_i)=0$.
For every $N$ and $\epsilon_N>0$,  
there exists $T_N\in P(V[\mu^1_N],V[\mu^2_N])$, with
$\pi_{13}\#T_N\in  P^{opt}(\mu^1_N,\mu^2_N)$, 
$T^1_N\in P(V[\mu_1],V[\mu^1_N])$, 
with $\pi_{13}\#T^1_N\in  P^{opt}(\mu_1,\mu^1_N)$,
and  $T^2_N\in P(V[\mu^2_N],V[\mu_2])$, with
$\pi_{13}\#T^2_N\in  P^{opt}(\mu^2_N,\mu_2)$,
 such that:
\[
\int_{T\R^n\times T\R^n} |v-w| \ dT_N(x,v,y,w)\leq \WW(V[\mu^1_N],V[\mu^2_N])+\epsilon_N,
\]
\[
\int_{T\R^n\times T\R^n} |v-w| \ dT^i_N(x,v,y,w)\leq \WW(V[\mu^i_N],V[\mu_i])+\epsilon_N, \qquad i=1,2.
\]
We can compose the transference plans $T^1_N$, $T_N$ and $T^2_N$, see 
Lemma 5.3.2, remark 5.3.3 and Section 7.1 of \cite{AGS}, thus there exists
$\tilde{T}_N $ such that 
$\tilde{\pi}_{12}\#\tilde{T}_N = T^1_N$,
$\tilde{\pi}_{23}\#\tilde{T}_N = T_N$,
$\tilde{\pi}_{34}\#\tilde{T}_N = T^2_N$,
and
$\tilde{\pi}_{14}\#\tilde{T}_N\in P(V[\mu_1],V[\mu_2])$, where
$\tilde{\pi}_{ij}$ is the projection on the $i$-th and $j$-th components of 
the Cartesian product $(T\R^n)^4$. Moreover, we have:
\[
\int_{(T\R^n)^2} |v-w| \ d(\tilde{\pi}_{14}\#\tilde{T}_N)(x,v,y,w)\leq
\int_{(T\R^n)^2} |v-w| \ d(T^1_N+T_N+T^2_N)(x,v,y,w)=
\]
\[
\WW(V[\mu_1],V[\mu^1_N])+\WW(V[\mu^1_N],V[\mu^2_N])+
\WW(V[\mu^2_N],V[\mu_2])+3\epsilon_N,
\]
and
\[
\int_{(T\R^n)^2} |x-y| \ d(\tilde{\pi}_{14}\#\tilde{T}_N)(x,v,y,w)\leq
\int_{(T\R^n)^2} |x-y| \ d(T^1_N+T_N+T^2_N)(x,v,y,w)=
\]
\[
W(\mu_1,\mu^1_N) +W(\mu^1_N,\mu^2_N) +
W(\mu^2_N,\mu_2).
\]
The sequence $\tilde{\pi}_{14}\#\tilde{T}_N$ is tight thus narrowly relatively compact
(Lemma 5.2.2 and Theorem 5.1.3 of \cite{AGS}), 
then there exists a subsequence converging to $\tilde{T}\in\PP((T\R^n)^2)$. 
The transportation costs are narrowly lower semicontinuous 
(Proposition 7.13 of \cite{AGS}), thus
we have that $\tilde{T}\in P(V[\mu_1],V[\mu_2])$ and:
\[
\int_{(T\R^n)^2} |v-w| \ d\tilde{T}(x,v,y,w)\leq \liminf_{N\to\infty}
\int_{(T\R^n)^2} |v-w| \ d(\tilde{\pi}_{14}\#\tilde{T}_N)(x,v,y,w).
\]
Moreover: 
\[
\int_{(T\R^n)^2} |x-y| \ d\tilde{T}(x,v,y,w)\leq
\liminf_{N\to\infty} \int_{(T\R^n)^2} |x-y| \ d(\tilde{\pi}_{14}\#\tilde{T}_N)(x,v,y,w) = W(\mu_1,\mu_2),
\]
thus $\pi_{13}\#\tilde{T}\in  P^{opt}(\mu_1,\mu_2)$.
Then:
\[
\WW(V[\mu_1],V[\mu_2])\leq
\int_{(T\R^n)^2} |v-w| \ d\tilde{T}(x,v,y,w)\leq
\]
\[
\leq \liminf_{N\to\infty}
\int_{(T\R^n)^2} |v-w| \ d(\tilde{\pi}_{14}\#\tilde{T_N})(x,v,y,w)\leq
\]
\[
\liminf_{N\to\infty}
\left(\WW(V[\mu_1],V[\mu^1_N])+\WW(V[\mu^1_N],V[\mu^2_N])+
\WW(V[\mu^2_N],V[\mu_2])+3\epsilon_N\right).
\]
Now we can choose $\epsilon_N\to 0$ as $N\to\infty$
and by (\ref{eq:(A)}) the first and third addendum in parenthesis tend to zero.
By (\ref{eq:est-kin}), the second addendum can be bounded by
$\sup_m \sum_i L^m_i(R)\ W(\mu_1,\mu_2)$, thus it follows:
\[
\WW(V[\mu_1],V[\mu_2])\leq \left(\sup_m \sum_i L^m_i(R)\right)\
W(\mu_1,\mu_2),
\]
then, by (A), $V$ satisfies (H3).\\
From Theorem \ref{th:MDE} there exists a Lipschitz semigroup
of solutions to the MDE $\dot{\mu}=V[\mu]$, obtained as limit of LASs. 
Moreover, using the local Lipschitz continuity of $v^m_i$, we can define 
a Dirac germ coinciding with the empirical probability measures
(which, in turn, coincide with the unique limit of LASs.)
Then we conclude applying Theorem \ref{th:uniq-sem}.
\end{proof}
We easily obtain the following:
\begin{corollary}\label{cor:kin}
Consider (\ref{eq:kin-sys}) with $\phi$ bounded and locally Lipschitz.
Then the conclusions of Theorem \ref{th:MDE-kinetic} hold true.
\end{corollary}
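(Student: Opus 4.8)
The plan is to verify that the system (\ref{eq:kin-sys}) is a special instance of the setting of Theorem \ref{th:MDE-kinetic}, with $v^m_i(x)=\frac{1}{m}\sum_{j=1}^m \phi(x_j-x_i)$. The structural hypotheses are immediate: boundedness holds since $|v^m_i(x)|\leq \sup|\phi|$ uniformly in $m$ and $i$ (which in particular yields (H1) for the limit field), and local Lipschitz continuity of each $v^m_i$ is inherited from that of $\phi$. Property (IP) is a one-line symmetry computation: for $\sigma\in\Sigma_m$ one has $v^m_i(x_\sigma)=\frac{1}{m}\sum_j \phi(x_{\sigma(j)}-x_{\sigma(i)})$, and reindexing the sum by $k=\sigma(j)$ (a bijection of $\{1,\dots,m\}$) gives exactly $v^m_{\sigma(i)}(x)$.

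The next step is to exhibit the limit PVF demanded by (A). I would take $V[\mu]=\mu\otimes\delta_{u[\mu](x)}$, where $u[\mu](x)=\int_{\R^n}\phi(y-x)\,d\mu(y)$ is the mean-field velocity; observe that $v^m_i(x)=\int \phi(y-x_i)\,d\mu_m(y)=u[\mu_m](x_i)$, so the empirical PVF $\frac{1}{m}\sum_i\delta_{(x_i,v^m_i(x))}$ is precisely $\mu_m\otimes\delta_{u[\mu_m]}$. To check (\ref{eq:(A)}) for $\mu_N\to\mu$, I would take an optimal base plan $\pi\in P^{opt}(\mu_N,\mu)$ and lift it to the (forced, since the fibers are deterministic) plan coupling $u[\mu_N](x)$ with $u[\mu](y)$. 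Writing $|u[\mu_N](x)-u[\mu](y)|\leq |u[\mu_N](x)-u[\mu_N](y)|+|u[\mu_N](y)-u[\mu](y)|$, the first summand is $\leq L_\phi|x-y|$ because $x\mapsto u[\mu_N](x)$ is $L_\phi$-Lipschitz, while the second is $\leq L_\phi\,W(\mu_N,\mu)$ by Kantorovich--Rubinstein (the integrand $z\mapsto\phi(z-y)$ being $L_\phi$-Lipschitz). Integrating against $\pi$ gives fiber cost $\leq L_\phi\,W(\mu_N,\mu)+L_\phi\,W(\mu_N,\mu)=2L_\phi\,W(\mu_N,\mu)\to 0$, which is exactly (\ref{eq:(A)}).

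The delicate point, and the main obstacle, is the uniform-in-$m$ Lipschitz bound underlying (H3). The generic estimate (\ref{eq:est-kin}) is controlled by $\sum_i L^m_i$, which for this all-to-all mean-field system is dominated by the self-interaction term and grows linearly in $m$; hence it does not by itself deliver a bound independent of $m$. The resolution is a direct computation exploiting the $1/m$ averaging: setting $d_k=|x^1_k-x^2_{\sigma(k)}|$ and using $|\phi(x^1_j-x^1_i)-\phi(x^2_{\sigma(j)}-x^2_{\sigma(i)})|\leq L_\phi(d_j+d_i)$, summation over $j$ and then over $i$ yields $\frac{1}{m}\sum_i|v^m_i(x^1)-v^m_{\sigma(i)}(x^2)|\leq \frac{2L_\phi}{m}\sum_k d_k$; choosing $\sigma$ optimal gives the bound $2L_\phi(2R)\,W(\mu_1,\mu_2)$ with a constant \emph{independent of $m$} (here $L_\phi(2R)$ is a Lipschitz constant of $\phi$ on $B(0,2R)$). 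This uniform estimate is the genuine content needed, and with it the argument of Theorem \ref{th:MDE-kinetic} — composing the transference plans $T^1_N$, $T_N$, $T^2_N$, passing to a narrowly convergent subsequence by tightness, and using lower semicontinuity of the transport cost — applies verbatim to establish (H3) for $V$, together with the well-posedness and compatibility with the empirical measures.

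Finally, I would conclude as in Theorem \ref{th:MDE-kinetic}: once (H3) holds, Theorem \ref{th:MDE} furnishes a Lipschitz semigroup obtained as a limit of LASs, the local Lipschitz continuity of $\phi$ (hence of the mean-field field $u[\mu]$) guarantees that the empirical measures define a Dirac germ coinciding with the LAS limits on finite sums of Dirac deltas, and Theorem \ref{th:uniq-sem} yields uniqueness of the compatible semigroup. I expect the only real work to be the uniform-in-$m$ Lipschitz estimate of the third paragraph; everything else is a routine specialization of the general theorem.
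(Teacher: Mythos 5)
Your proof is correct and follows the paper's overall strategy (specialize Theorem \ref{th:MDE-kinetic} by verifying boundedness, (IP) and (A)), but it is more careful than the paper's own argument on the one point that actually matters, and you have in fact located a genuine flaw there. The paper's proof of the corollary asserts that $L^m_i(R)=\frac{1}{m}L_\phi(R)$, whence $\sup_m\sum_i L^m_i(R)=L_\phi(R)$; this undercounts the dependence of $v^m_i$ on its own coordinate $x_i$, which enters all $m$ summands of $\frac{1}{m}\sum_j\phi(x_j-x_i)$, so that the Lipschitz constant of $v^m_i$ as a function on $(\R^n)^m$ is of order $L_\phi$ rather than $L_\phi/m$ and $\sum_i L^m_i$ grows like $m$ --- exactly as you observe. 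Your repair is the right one: the proof of Theorem \ref{th:MDE-kinetic} never uses the individual constants $L^m_i$ except through the aggregated estimate (\ref{eq:est-kin}), and your direct computation $\frac{1}{m}\sum_i|v^m_i(x^1)-v^m_i(x^2_\sigma)|\leq\frac{L_\phi}{m}\sum_j d_j+\frac{L_\phi}{m}\sum_i d_i=\frac{2L_\phi}{m}\sum_k d_k$ shows that the outer $1/m$ average tames the self-interaction terms and yields the uniform bound $2L_\phi(2R)\,W(\mu_1,\mu_2)$, after which the plan-composition and lower-semicontinuity machinery of Theorem \ref{th:MDE-kinetic} goes through verbatim. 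Your verification of (\ref{eq:(A)}) via the Lipschitz continuity of $x\mapsto\int\phi(y-x)\,d\mu(y)$ and Kantorovich--Rubinstein is also a welcome expansion of the paper's one-line claim that ``(A) follows from the local Lipschitz continuity of $\phi$,'' and your explicit check of (IP) by reindexing is implicit but omitted in the paper. In short: same route, but you supply the uniform-in-$m$ estimate that the published proof gets wrong, which is the real content of the corollary.
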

\begin{proof}
The system (\ref{eq:kin-sys}) can be written as (\ref{eq:gen-kin})
with $v^m_i (x) = \frac{1}{m}\sum_{j=1}^m \phi(x_j-x_i)$.
The uniform boundedness of $v^i_m$ follows from the boundedness of $\phi$.
Moreover, if $L_\phi(R)$ is the Lipschitz constant of $\phi$ on $B(0,R)$, then
$L^m_i(R)=\frac{1}{m} L_\phi(R)$ and $\sup_m \sum_i L^m_i(R)=L_\phi(R)$.
Finally, defining $V[\mu]=\mu\otimes \int_{\R^n} \phi(x-y)d\mu(y)$,
(A) follows from the local Lipschitz continuity of $\phi$.
We conclude applying Theorem \ref{th:MDE-kinetic}.
\end{proof}

\begin{remark}
Kinetic models with concentration phenomena were studied in a number
of papers, see for instance \cite{Bertozzi}. These models are not expected
to verify condition (H4), however they exhibit uniqueness of forward trajectories
for empirical measures. It would be natural to apply the MDE theory to prolong
solutions past blow-up times.
\end{remark}

\section*{Appendix}
{\textsc{Proof of of Lemma} \ref{lem:distr-sol}.}
Assume first that $\mu$ satisfies (\ref{eq:MDE-distr}) 
then for $g(t,x)=a(t)f(x)$, with $a\in{\C}^\infty_c(]0,T[)$ and $g\in{\C}^\infty_c(\R^n)$
we have:
\[
0=a(T) \int_{R^n} f(x)\,d\mu(T)(x) - a(0) \int_{R^n} f(x)\,d\mu_0(x) \ =
\]
\[
\int_0^T   a'(s) \int_{R^n} f(x)\ d\mu(s)\,\,ds+
\int_0^T a(s) \int_{TR^n}  (\nabla_x f(x)\cdot v)\ dV[\mu(s)](x,v)\ ds.
\]
Since $a$ is arbitrary we obtain that (\ref{eq:sol-MDE}) holds true
for every $f$.\\
Conversely the same computation show that if $\mu$ is a solution to to (\ref{eq:MDE-Cauchy})
then (\ref{eq:MDE-distr}) is satisfied for every $f(t,x)=a(t)g(x)$ thus
we conclude by density of such functions in ${\C}^\infty_c([0,T]\times \R^n)$.
{$\hfill\Box$\vspace{0.1 cm}\\}

{\textsc{Proof of of Lemma} \ref{lem:unif-comp}.}
The sequence $\mu_N$ is tight, i.e. for every $\epsilon>0$
there exists a compact set $K_\epsilon\subset \R^n$ such that for all $N$ it holds
$\mu_N(\R^n\setminus K_\epsilon)\leq\epsilon$. 
This is trivially satisfied taking $K_\epsilon=B(0,R)$.
Then, by Prokhorov Theorem (see Theorem 5.1.3 of \cite{AGS}) 
there exists a subsequence converging narrowly to $\mu\in\PP_c(\R^n)$,
i.e. $\int f\,d\mu_N\to \int f\,d\mu$ for every $f:\R^n\to\R$ continuous and bounded.
Since the moments $\int |x|d\mu_N$ are uniformly bounded, 
$W(\mu_N(t),\mu(t))\to 0$ (see Proposition 7.1.5 of \cite{AGS}).
{$\hfill\Box$\vspace{0.1 cm}\\}

{\textsc{Proof of of Lemma} \ref{lem:Sem-est}.}
The function:
\[
\liminf_{h\to 0+} \frac{1}{h} W(S_h \mu(s),\mu(s+h))
\]
is measurable and bounded. Measurability follows from observing
that the incremental ratios are continuous for fixed $h$ and taking
the infimum over $h\in\Q$, while boundedness from Lipschitz
continuity of the semigroup trajectories and of $\mu(\cdot)$.\\
Define:
\[
\psi(s)=W(S_{t-s}\mu(s),S_t\mu(0)),\quad
x(s)=\psi(s)-e^{Kt}\int_0^s \liminf_{h\to 0+} \frac{1}{h} W(S_h \mu(r),\mu(r+h))\,dr.
\]
Notice that $\psi(0)=x(0)=0$ and $\psi$ and $x$ are Lipschitz continuous. Therefore
for Rademacher Theorem $\dot\psi(s)$ and $\dot x(s)$ are defined for almost every $s$.
Moreover, by Lebesgue Theorem, $\phi$ is approximately continuous for almost every $s$.
Therefore, for almost every $s$, we have:
\[
\dot{x}(s)=\dot{\psi}(s)-e^{Kt} \liminf_{h\to 0+} \frac{1}{h} W(S_h \mu(s),\mu(s+h)).
\]
Moreover:
\[
\psi(s+h)-\psi(s)=W(S_{t-(s+h)}\mu(s+h),S_t\mu(0))-W(S_{t-s}\mu(s),S_t\mu(0))\leq
\]
\[
W(S_{t-(s+h)}\mu(s+h),S_{t-s}\mu(s))=
W(S_{t-(s+h)}\mu(s+h),S_{t-(s+h)}S_h\mu(s))\leq e^{Kt} W(\mu(s+h),S_h\mu(s)),
\]
which implies 
\[
\dot{\psi}(s)\leq e^{Kt} \liminf_{h\to 0+} \frac{1}{h} W(\mu(s+h),S_h\mu(s)).
\]
thus $\dot{x}(s)\leq 0$ for almost every $s$. Finally $x(t)\leq 0$
which proves the Lemma.
{$\hfill\Box$\vspace{0.1 cm}\\}


\end{document}